\newtheorem{theorem}{Theorem}[section]
\newtheorem{lemma}[theorem]{Lemma}
\newtheorem{corollary}[theorem]{Corollary}
\DeclareMathOperator{\tw}{tw}
\DeclareMathOperator{\cher}{{\rm \textsf{Cher}}}
\def\dd{\hbox{-}}   
\newcommand{\mf}{\mathfrak}
\newcommand{\mca}{\mathcal}
\newcommand{\poi}{\mathbb{N}}
\newcounter{tbox}
\newcommand{\sta}[1]{\medskip\medskip\refstepcounter{tbox}\noindent{\parbox{\textwidth}{(\thetbox) \emph{#1}}}\vspace*{0.3cm}}
\newcommand{\mylongtitle}[1]{%
  \ifodd\value{page}%
    \protect\parbox{0.97\linewidth}{#1}\hfill%
  \else%
    \hfill\protect\parbox{0.97\linewidth}{#1}%
  \fi%
}
\tikzset{snake it/.style={decorate, decoration=snake}}
\definecolor{mblack}{RGB}{40, 40, 43}
\begin{document}

\begin{frontmatter}[classification=text]

\title{Induced subgraphs and tree decompositions\\
IX. Grid theorem for perforated graphs} 

\author[alecu]{Bogdan Alecu\thanks{Supported by DMS-EPSRC Grant EP/V002813/1.}}
\author[chudnov]{Maria Chudnovsky\thanks{Supported by NSF-EPSRC Grant DMS-2120644 and by AFOSR grant FA9550-22-1-0083.}}
\author[hajebi]{Sepehr Hajebi}
\author[spirkl]{Sophie Spirkl\thanks{We acknowledge the support of the Natural Sciences and Engineering Research Council of Canada (NSERC), [funding reference number RGPIN-2020-03912].
Cette recherche a \'et\'e financ\'ee par le Conseil de recherches en sciences naturelles et en g\'enie du Canada (CRSNG), [num\'ero de r\'ef\'erence RGPIN-2020-03912]. This project was funded in part by the Government of Ontario. This project was funded in part by the Government of Ontario.}}

\begin{abstract}
The celebrated Erd\H os-P\'osa Theorem, in one formulation, asserts that for every $c\in \poi$, graphs with no subgraph (or equivalently, minor) isomorphic to the disjoint union of $c$ cycles have bounded treewidth. What can we say about the treewidth of graphs containing no \textit{induced} subgraph isomorphic to the disjoint union of $c$ cycles?

Let us call these graphs \textit{$c$-perforated}. While $1$-perforated graphs have treewidth one, complete graphs and complete bipartite graphs are examples of $2$-perforated graphs with arbitrarily large treewidth. But there are sparse examples, too: Bonamy, Bonnet, D\'{e}pr\'{e}s, Esperet, Geniet, Hilaire, Thomass\'{e} and Wesolek constructed $2$-perforated graphs with arbitrarily large treewidth and no induced subgraph isomorphic to $K_3$ or $K_{3,3}$; we call these graphs \textit{occultations}. Indeed, it turns out that a mild (and inevitable) adjustment of occultations provides examples of $2$-perforated graphs with arbitrarily large treewidth and arbitrarily large girth, which we refer to as \textit{full occultations}.

Our main result shows that the converse also holds: for every $c\in \poi$, a $c$-perforated graph has large treewidth if and only if it contains, as an induced subgraph, either a large complete graph, or a large complete bipartite graph, or a large full occultation. This distinguishes $c$-perforated graphs, among graph classes purely defined by forbidden induced subgraphs, as the first to admit a grid-type theorem incorporating obstructions other than subdivided walls and their line graphs. More generally, for all $c,o\in \poi$, we establish a full characterization of induced subgraph obstructions to bounded treewidth in graphs containing no induced subgraph isomorphic to the disjoint union of $c$ cycles, each of length at least $o+2$.
\end{abstract}
\end{frontmatter}

\section{Introduction}\label{sec:intro}

\subsection{Background} Graphs in this paper have finite vertex sets, no loops and no parallel edges. Let $G=(V(G),E(G))$ be a graph. For $X \subseteq V(G)$, we denote by $G[X]$ the subgraph of $G$ induced by $X$. In this paper, we use induced subgraphs and their vertex sets interchangeably. For graphs $G$ and $H$, we say $G$ \emph{contains} $H$ if $G$ has an induced subgraph isomorphic to $H$, and we say $G$ is \emph{$H$-free} if $G$ does not contain $H$. A class of graphs (which is just another term for a ``set'' of graphs quotiented by isomorphism) is \textit{hereditary} if it is closed under taking induced subgraphs.

The \textit{treewidth} of a graph $G$, denoted $\tw(G)$, is the smallest integer $w\in \poi$ for which there is a tree $T$ as well as a subtree of $T$ assigned to each vertex of $G$, such that the subtrees corresponding to adjacent vertices of $G$ intersect, and each vertex of $T$ belongs to at most $w+1$ subtrees corresponding to the vertices of $G$. This may be viewed as a natural extension of the classical ``Helly property'' for subtrees \cite{GLhelly, Hornhelly}: \textit{given a tree $T$ and a collection of pairwise intersecting subtrees of $T$, some vertex of $T$ belongs to all subtrees}. The treewidth in turn measures how few subtrees each vertex of $T$ can be guaranteed to appear in, provided only \textit{some} pairs of subtrees intersect.

The systematic study of treewidth was originated by Robertson and Seymour as a highlight of their graph minors project. Roughly speaking, graphs of small treewidth are ``measurably fattened forests.'' This brings on a wealth of nice structural
\cite{RS-GMV} and algorithmic \cite{Bodlaender1988DynamicTreewidth} properties for them, which partly explains the enduring popularity of the subject, as well. It also motivates understanding graphs of large treewidth. In particular, it is of enormous interest to identify graph classes in which large treewidth can be certified ``locally'' by a subconfiguration, still of (relatively) large treewidth yet structurally simpler that the host graph. The prototypical result of this sort is the foundational ``Grid Theorem'' of Robertson and Seymour \cite{RS-GMV}, Theorem~\ref{wallminor} below, which characterizes all subgraph-closed (and also minor-closed) graph classes of bounded treewidth. In effect, the Grid Theorem says that every graph of sufficiently large treewidth contains as a subgraph some subdivision of a highly symmetrical graph of large treewidth called a ``wall.'' For  $t\in \poi$, we denote by $W_{t \times t}$ the {\em $t$-by-$t$ wall}, which is the $t$-by-$t$ hexagonal grid and has treewidth $t$ (see Figure~\ref{fig:5x5wall}, and also \cite{wallpaper}).
\begin{theorem}[Robertson and Seymour \cite{RS-GMV}]\label{wallminor}
For every $t\in \poi$, there is a constant $w=w(t)\in \poi$ such that 
every graph of treewidth more than $w$
contains $W_{t \times t}$ as a minor, or equivalently, a subdivision of $W_{t \times t}$ as a subgraph.
\end{theorem}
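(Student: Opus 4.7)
The plan is to exploit the duality between treewidth and brambles due to Seymour and Thomas: any graph of treewidth at least $w$ admits a bramble (a family of pairwise touching connected subgraphs) of order exceeding $w$, meaning no vertex set of size at most $w$ hits every member. This supplies a robust ``dense'' substructure from which a wall subdivision can be carved. The equivalence of the two conclusions is free: since $W_{t\times t}$ has maximum degree three, every minor model can be refined into a topological minor (subdivision) model.

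From a bramble of sufficiently large order, the first step is to extract a large \emph{well-linked set} $S\subseteq V(G)$: a set such that for any two disjoint $A,B\subseteq S$ with $|A|=|B|$, there exist $|A|$ vertex-disjoint paths between $A$ and $B$ in $G$. Existence of such an $S$, of size polynomially related to the bramble order, follows from a standard averaging/minimality argument applied to a minimum cover of the bramble. Well-linkedness is the essential tool: via Menger's theorem it lets us route many disjoint paths between parts of $S$ essentially at will, even after some vertices have been set aside.

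Given a well-linked set $S$ of size suitably large compared to $t$, I would build the $t\times t$ wall row by row. First, select a long path $P_1$ passing through many vertices of $S$. Using well-linkedness, find a second long path $P_2$ admitting $t$ vertex-disjoint connecting paths to $P_1$. Continuing iteratively, each new path $P_{i+1}$ should admit $t$ disjoint connectors to $P_i$ whose interiors avoid all structure built so far. After $t$ iterations, prune and reroute the resulting horizontal paths and vertical connectors to extract a subdivision of $W_{t\times t}$.

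The principal obstacle will be the \emph{persistence} of well-linkedness across the iteration: each newly extracted path and its family of connectors removes vertices from $S$ and can destroy well-linkedness of the remainder inside the residual graph. The fix is twofold: start with $S$ enormously larger than $t$, and prove a lemma to the effect that deleting a bounded number of vertices from a well-linked set leaves a sizable subset well-linked in the reduced graph. Controlling this quantitative bookkeeping is the crux of the argument, and is also responsible for the very fast-growing function $w(t)$ that this approach produces.
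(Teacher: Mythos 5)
The paper does not prove Theorem~\ref{wallminor}; it cites it as a known result of Robertson and Seymour, so there is no ``paper's own proof'' to compare against. Evaluating your sketch on its own terms: the high-level strategy (treewidth--bramble duality, extraction of a well-linked set from a minimum bramble cover, then an iterative construction of a wall) is indeed the skeleton of several modern proofs (Diestel--Jensen--Gorbunov--Thomassen, Reed's survey, Chekuri--Chuzhoy), and the remark that for a graph of maximum degree three a minor model can always be upgraded to a subdivision model is correct and standard. So the ingredients you name are the right ones.

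That said, there are two genuine gaps. First, the ``build the wall row by row'' step conceals the actual difficulty. Finding a new long path $P_{i+1}$ with $t$ disjoint connectors to $P_i$ does not, by itself, produce a wall: the connectors may attach to $P_i$ and $P_{i+1}$ in incompatible orders (crossings), and the connectors from consecutive rows must be chained into coherent vertical paths of the wall. Every actual proof devotes a dedicated combinatorial lemma to untangling or re-routing crossing connectors (or, equivalently, first builds an intermediate object such as a ``mesh,'' a ``$k$-web,'' or a ``path-of-sets system'' and only then extracts the wall subdivision). Without this step your iteration produces a dense linked structure but not yet a planar wall pattern, and that gap is where most of the work of the theorem actually lives.

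Second, the proposed persistence lemma --- that deleting a bounded set of vertices from a well-linked set leaves a large subset well-linked in the reduced graph --- is not true as stated and cannot simply be asserted. Well-linkedness is a global property of the ambient graph, and deleting even a few vertices can create a small separator that destroys it for large pieces of $S$. The standard remedies are more delicate: one works with $\alpha$-well-linked sets for a fixed $\alpha<1$ so that quantitative degradation can be absorbed, or one repeatedly re-derives a linked set from a persistent dual object, or one carries along a ``path-of-sets'' invariant and proves a boosting lemma for it. You would need to pick one of these and prove the corresponding lemma carefully; controlling this bookkeeping is exactly what makes the bound $w(t)$ grow so fast, as you note, but the lemma itself is not free.
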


\begin{figure}[t!]
\centering
\includegraphics[scale=0.7]{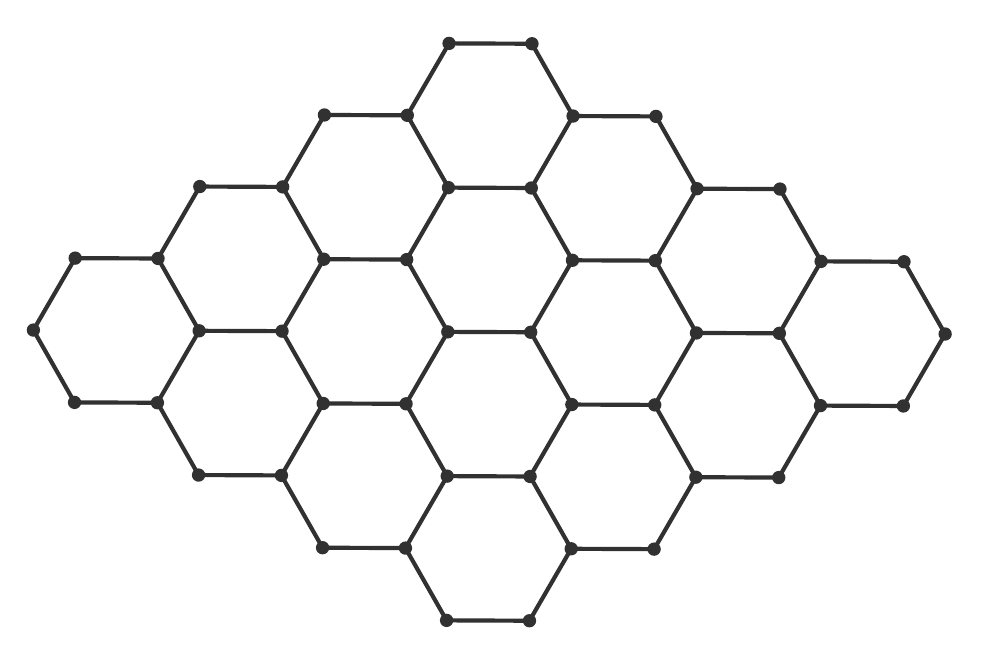}
\caption{The graph $W_{5 \times 5}$.}
\label{fig:5x5wall}
\end{figure}
What could be a ``Grid Theorem'' for induced subgraphs? This question lies at the heart of a recent trend in structural graph theory, which aims at bridging the gap between the two pillars of the subject: graph minors and the theory of induced subgraphs. Despite the immense body of research in both areas, however, the above question remains wide open, to the extent that (more or less) the only reliable clue at the moment is provided by the ``basic obstructions,'' which are known to be necessary yet quite far from sufficient for a full answer.

Let us be more precise. For $t\in \poi$, we say a graph $H$ is a {\em $t$-basic obstruction} if $H$ is either the complete graph
$K_t$, or the complete bipartite graphs $K_{t,t}$, or a subdivision of $W_{t \times t}$ mentioned above, or the line graph of
a subdivision of $W_{t \times t}$, where the {\em line graph} $L(F)$ of a graph $F$ is the graph with vertex set $E(F)$, such that two vertices of $L(F)$ are adjacent if the corresponding edges of $F$ share an end.  The basic obstructions are known to have arbitrarily large treewidth; in fact, for $t\in \poi$, the complete graph $K_{t+1}$, the complete bipartite graph $K_{t,t}$, all subdivisions of $W_{t\times t}$ and line graphs of all subdivisions of $W_{t\times t}$ all have treewidth $t$. So an exhaustive list of unavoidable induced subgraphs of graphs with large treewidth must contain, for some $t$, an induced subgraph of a $t$-basic obstruction of each type. It is also of note that there are hereditary classes for which the basic obstructions do comprise a full list of induced subgraph obstructions to bounded treewidth. We call such classes ``clean.'' In technical terms, for $t\in \poi$, we say a graph $G$ is \textit{$t$-clean} if $G$ does not contain a $t$-basic obstruction. A graph class $\mca{C}$ is \textit{clean} if for every $t\in \poi$, there is a constant $w(t)\in \poi$ (depending on $\mca{C}$) such that every $t$-clean graph in $\mca{G}$ has treewidth at most $w(t)$. For instance, Korhonen \cite{Korhonen} proved that every graph class of bounded maximum degree is clean. With Abrishami, we recently extended this to a full characterization of graphs $H$ for which the class of $H$-free graphs is clean:
\begin{theorem}[Abrishami, Alecu, Chudnovsky, Hajebi and Spirkl \cite{twvii}]\label{thm:tw7}
   The class of all $H$-free graphs is clean if and only if $H$ is a subdivided star forest, that is, a forest in which every component has at most one vertex of degree more than two.
\end{theorem}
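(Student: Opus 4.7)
The plan is to prove the two directions separately. For the necessity direction, I would construct, for each $H$ that is not a subdivided star forest, a sequence of $t$-clean $H$-free graphs of unbounded treewidth. For the sufficiency direction, the harder one, I would proceed by induction on the number of components of $H$, reducing to the case of a single subdivided star.

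For necessity, the graphs $H$ that are not subdivided star forests split into two kinds: those that contain a cycle, and those that are forests but have a component with at least two vertices of degree at least three. For the first kind, I would use constructions (random $d$-regular graphs with $d$ large, or explicit high-girth expanders) of arbitrarily large treewidth, arbitrarily large girth, and large minimum degree; high minimum degree rules out long induced paths, so no large induced subdivision of a wall occurs, while high girth excludes $K_t$, $K_{t,t}$, and line graphs of subdivided walls. For the second kind, I would appeal to layered-wheel-type constructions from the literature, which are triangle-free and $K_{t,t}$-free, have arbitrarily large treewidth, and contain no induced copy of a forest having a component with two vertices of high degree.

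For sufficiency, write $H = H_1 \cup \cdots \cup H_m$ with each $H_i$ a subdivided star. I would show by induction on $m$ that there is a function $f(H,t)$ such that every $H$-free, $t$-clean graph has treewidth at most $f(H,t)$. The base case $m=1$ is the key technical claim: for a fixed subdivided star $S$, every $t$-clean graph that is $S$-free has bounded treewidth. To prove this, I would assume $\tw(G)$ is very large and, using structural properties of $t$-clean graphs available from earlier papers in the series, extract many pairwise vertex-disjoint long induced paths meeting at a common neighborhood; a Ramsey-type cleaning step, ensuring these paths are pairwise anticomplete away from a common endpoint and free of unwanted chords, then yields an induced copy of $S$, a contradiction. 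For the inductive step, I would use a balanced-separator argument to locate in $G$ a large-treewidth subgraph $G'$ that is anticomplete from another large-treewidth subgraph $G''$; applying the base case to $G'$ produces a copy of $H_1$, and the inductive hypothesis applied to $G''$ with the smaller forest $H \setminus H_1$ produces the remaining components, assembling into an induced copy of $H$.

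The main obstacle, I expect, is the base case for a single subdivided star: guaranteeing that the long induced paths we extract actually assemble into an induced (not merely immersed or weakly induced) copy of $S$ requires very tight control of adjacencies in a $t$-clean graph, and this is the heart of the argument. Once this is in hand, the multi-component reduction should follow comparatively routinely by iterating the separation and extraction steps available in $t$-clean graphs, and the necessity constructions are essentially off-the-shelf.
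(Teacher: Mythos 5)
Your necessity construction for the case where $H$ contains a cycle does not work. You propose high-girth, high-minimum-degree graphs (random $d$-regular graphs or explicit expanders), and argue that high minimum degree precludes long induced paths and hence induced subdivided walls, while high girth excludes $K_t$, $K_{t,t}$, and line graphs of subdivided walls. The last three exclusions are fine (line graphs of subdivided walls contain triangles), but the claim about subdivided walls is where the argument breaks: Korhonen's theorem (cited in the paper as \cite{Korhonen}) says that every class of bounded maximum degree is clean, so a sequence of $d$-regular graphs with treewidth tending to infinity cannot remain $t$-clean for any fixed $t$. Since high girth rules out the other three basic obstructions, these graphs must contain induced subdivisions of $W_{t\times t}$; they are therefore not $t$-clean and cannot witness that the class of $H$-free graphs fails to be clean. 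Letting $d$ grow does not rescue the construction either: high-girth, high-degree graphs of large treewidth contain strong blocks, and Theorem~\ref{noblocksmalltw_wall} again forces an induced basic obstruction, which by the girth argument must be a subdivided wall. You would instead need a construction that deliberately avoids induced subdivided walls while retaining large girth and large treewidth, such as the high-girth full occultations guaranteed by Theorem~\ref{thm:fulldeathstarproperties}\ref{fulldeathstarproperties_a} of the present paper, or the bespoke constructions in the cited \cite{twvii}.

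A secondary concern: in the sufficiency inductive step over components, a balanced-separator argument does not directly produce two anticomplete subgraphs each of large treewidth (large treewidth is precisely the absence of small balanced separators). Nor can you simply delete $N[V(H_1)]$ after finding a copy of $H_1$ and claim what remains has large treewidth, since in a $t$-clean graph that closed neighborhood is neither of bounded size nor of bounded treewidth. This reduction needs a genuine mechanism, not just an appeal to separators.
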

Nevertheless, as mentioned earlier, the basic obstructions cannot carry all the load. There are at least three constructions certifying this fact. We list them below in the chronological order of discovery. They all consist of graphs with arbitrarily large treewidth which are $3$ or $4$-clean, and the third one is central to the context of this paper, which we will elaborate on later.
\begin{itemize}
    \item The so-called ``layered-wheels'' of Sintiari and Trotignon \cite{layered-wheels} consisting of even-hole-free graphs and theta-free graphs.
    \item A construction first popularized by Davies \cite{Davies2}, also found earlier by Pohoata \cite{Pohoata}.
    \item A construction by Bonamy, Bonnet, D\'{e}pr\'{e}s, Esperet, Geniet, Hilaire, Thomass\'{e} and Wesolek \cite{deathstar}, consisting of graphs with ``bounded induced cycle packing number.''
\end{itemize}
All of this goes to show that an ultimate grid-type theorem for induced subgraphs must encompass some ``non-basic obstructions,'' an exact description of which remains unknown. More dramatically, until now there was no hereditary class for which there is a grid-type theorem involving any non-basic obstruction. Our main result introduces the first such class.

\subsection{Motivation and (a taste of) the main result}
In order to formally state our main result, Theorem~\ref{mainthmgeneral}, we need quite a few definitions, and so we postpone it to Section~\ref{sec:asterism}. Instead, our goal here is to motivate Theorem~\ref{mainthmgeneral} and give the exact statement of a weakening that captures its essence. This takes a brief digression to the world of the ``Erd\H{o}s-P\'{o}sa Theorem.''

Observe that graphs with no subgraph ismorphic to a cycle are forests, which in turn are exactly the graphs with treewidth $1$. Erd\H{o}s and P\'{o}sa famously extended this simple fact to graphs excluding the disjoint union of any prescribed number of cycles as a subgraph, showing that such graphs remain proportionately close to being a forest:
\begin{theorem}[Erd\H{o}s and P\'{o}sa \cite{ErdosPosa}]\label{ErdosPosa}
    For every   $c\in \poi$, there is a constant $h \in \poi \cup \{0\}$ such that in every graph $G$ with no subgraph (or equivalently, minor) isomorphic to the disjoint union of $c$ cycles, there exists a subset $X\subseteq V(G)$ with $|X|\leq h$ such that $G\setminus X$ is a forest.
\end{theorem}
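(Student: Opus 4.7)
My plan bypasses induction on $c$ in favour of two direct reductions. First, I reduce $G$ to a multigraph $G'$ of minimum degree at least $3$ by iteratively deleting vertices of degree at most $1$ and suppressing vertices of degree exactly $2$ (replacing a length-two path by a single edge, with loops and parallel edges allowed). These operations preserve the feedback vertex set number: $V(G') \subseteq V(G)$, and since every cycle of $G$ passes through at least one surviving vertex of $G'$, every feedback vertex set of $G'$ is also a feedback vertex set of $G$ of the same size.

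Second, I establish the extremal lemma: every $n$-vertex multigraph of minimum degree $\geq 3$ contains $\Omega(n/\log n)$ pairwise vertex-disjoint cycles. The lemma's proof is iterative. A breadth-first search from any vertex produces a cycle of length at most $2\log_2 n$, because otherwise each level of the BFS tree would be at least twice the previous one (using that minimum degree $\geq 3$ forces each non-root vertex to have $\geq 2$ non-parent neighbours), forcing $2^d \leq n$ at the last level. Delete such a short cycle, re-reduce the remainder to minimum degree $\geq 3$, and repeat. Applied to $G'$: if $|V(G')| > Cc\log c$ for a suitable absolute constant $C$, the lemma produces $c$ pairwise vertex-disjoint cycles in $G'$, hence in $G$, contradicting the hypothesis. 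Thus $X := V(G')$ is a feedback vertex set of $G$ of size at most $Cc\log c$, and the theorem holds with $h = Cc\log c$ (matching the classical Erd\H{o}s--P\'osa bound).

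The main obstacle is the re-reduction step inside the proof of the extremal lemma: deleting a short cycle can trigger a cascade of degree-$\leq 2$ removals, potentially shrinking the graph by far more than the length of the cycle itself. A careful amortized charging of the cascade cost back to the deleted cycle is needed to preserve the $\Omega(n/\log n)$ rate. An alternative route, leveraging machinery already in the paper, is to invoke Theorem~\ref{wallminor}: choosing $t = O(\sqrt{c})$ so that $W_{t\times t}$ contains $c$ vertex-disjoint hexagonal faces (which remain vertex-disjoint cycles under subdivision) forces $\tw(G') \leq w(t)$, after which a separator-based analysis of a tree decomposition of $G'$ together with its minimum-degree-$\geq 3$ property yields a vertex count bound on $G'$ and hence the same conclusion, at the cost of a weaker quantitative dependence on $c$.
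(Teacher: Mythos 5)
Your plan has a fatal gap: the ``extremal lemma'' at its core --- that every $n$-vertex multigraph of minimum degree at least $3$ contains $\Omega(n/\log n)$ pairwise vertex-disjoint cycles --- is false, so no amortized charging of the re-reduction cascade can save it. Take $G=K_{3,m}$: it is simple, has minimum degree $3$ (so it is already reduced, $G'=G$) and $n=m+3$ vertices, yet every cycle must use at least two of the three vertices on the small side, so $G$ does not even contain two vertex-disjoint cycles. The same example shows the cascade is genuinely uncontrollable: deleting one $4$-cycle removes two of the three small-side vertices, and the re-reduction then wipes out everything that remains. Worse, it defeats your overall strategy and not merely the lemma: with $c=2$ your argument would conclude that $|V(G')|\le 2C\log 2$, i.e.\ that a minimum-degree-$3$ graph with no two disjoint cycles has boundedly many vertices, which $K_{3,m}$ refutes; and since $K_{3,m}$ has treewidth $3$, the alternative route via Theorem~\ref{wallminor} cannot yield ``a vertex count bound on $G'$'' either. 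The conclusion to draw is that the hitting set cannot be taken to be $V(G')$; the either/or structure of the statement is essential. (The paper does not prove this theorem --- it cites Erd\H{o}s--P\'osa --- so the comparison is with the classical argument.) The classical proof uses your iteration only where the \emph{maximum} degree is also $3$: for cubic multigraphs the lemma is true, since deleting a shortest cycle of length $\ell\le 2\log_2 n$ decreases the potential $|E|-|V|$ by at most $\ell$, the reduction does not decrease this potential, and maximum degree $3$ is preserved throughout. A general $G$ is then handled, e.g.\ as in the standard textbook presentation, by taking a maximal subgraph $H$ of $G$ that is a subdivision of a cubic multigraph: either its underlying cubic multigraph is large and the cubic lemma supplies the $c$ disjoint cycles, or the set of branch vertices of $H$ is small and, together with $O(c)$ further vertices, meets every cycle of $G$, maximality of $H$ being what controls the cycles that avoid the branch vertices.

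There is also a smaller, fixable error in your first reduction: the claim that every cycle of $G$ passes through a surviving vertex of $G'$, so that every feedback vertex set of $G'$ is one of $G$, fails for components of cycle rank one. A component that is a single cycle (possibly with trees attached) dissolves completely under deletion of low-degree vertices and suppression, so for $G$ a single long cycle and $c=2$ you would get $G'=\emptyset$ and $X=\emptyset$, which is not a feedback vertex set. Since $G$ has no $c$ disjoint cycles there are at most $c-1$ such components, and adding one vertex from each repairs this step; but as stated the assertion is false, and in any case the main flaw above requires a genuinely different argument rather than a patch.
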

It follows from Theorem~\ref{ErdosPosa} that for every  $c\in \poi$, graphs with no subgraph isomorphic to the disjoint union of $c$ cycles have bounded treewidth (indeed, the latter statement is equivalent Theorem~\ref{ErdosPosa}; see \cite{RS-GMV}). 
It is therefore natural to ask: what can be said about the treewidth of graphs with no \textit{induced} subgraph isomorphic to the disjoint union of many cycles? For $c\in \poi$, we say a graph $G$ is \textit{$c$-perforated} if $G$ does not contain (as an induced subgraph) the disjoint union of $c$ cycles. Then, $1$-perforated graphs still have treewidth $1$. However, even for $2$-perforated graphs, bounded treewidth is far from a realistic expectation: for every   $t\in \poi$, the complete graph $K_t$ and the complete bipartite graph $K_{t,t}$ are both $2$-perforated basic obstructions. In contrast,  one may readily observe that for every $c\in \poi$,  sufficiently large subdivided walls and their line graphs are not $c$-perforated.

One may then speculate that for all  $c,t\in \poi$, every $c$-perforated graph containing neither $K_t$ nor $K_{t,t}$ has bounded treewidth. This turns out not to be true either, yet the reason is not as simple. Bonamy, Bonnet, D\'{e}pr\'{e}s, Esperet, Geniet, Hilaire, Thomass\'{e} and Wesolek \cite{deathstar} provided, for every   $s\in \poi$, a beautiful construction of $2$-perforated graphs with treewidth at least $s-1$ and containing neither $K_3$ not $K_{3,3}$ (see also \cite{NSSperforated} for another work on perforated graphs). We call these graphs  ``$s$-occultations'' and we will give a detailed description of them in a moment. But let us first state our main result: surprisingly enough, the above construction completes the picture of induced subgraph obstructions to bounded treewidth in $c$-perforated graphs for \textit{every} $c$. More precisely, we introduce a slight modification of $s$-occultations called \textit{full $s$-occultations}, and show that they are all we need to establish a grid-type theorem for $c$-perforated graphs:
\begin{theorem}\label{mainthm}
    For all   $c,t\in \poi$ and $s \in \poi \cup \{0\}$, there is a constant $\tau=\tau(c,s,t)\in \poi$ such that every $c$-perforated graph of treewidth more than $\tau$ contains either $K_t$, or $K_{t,t}$, or a full $s$-occultation.
\end{theorem}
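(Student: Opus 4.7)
The overall strategy will be to combine the classical Grid Theorem with a Ramsey-type analysis of how the $c$-perforated hypothesis constrains the ``extra'' edges that appear on top of a large wall subdivision.

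First, given a $c$-perforated graph $G$ of sufficiently large treewidth, I would apply Theorem~\ref{wallminor} to extract, as a subgraph of $G$, a subdivision $W$ of $W_{N\times N}$ where $N=N(c,s,t)$ is chosen large enough; in particular one should subdivide each edge of the underlying wall roughly $s$ times, so that every cycle of $W$ has length at least $s+2$, which will later handle the girth parameter in the definition of a \emph{full} $s$-occultation. If $W$ were an \emph{induced} subgraph of $G$, then $W$ itself would already contain arbitrarily many pairwise vertex-disjoint induced cycles, contradicting the $c$-perforated hypothesis. Hence there must be many chords of $W$ in $G$, and it is the pattern of these chords that drives the rest of the argument.

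Second, I would apply iterated Ramsey/pigeonhole arguments to refine $W$ to a still-large sub-wall on which chords display a uniform behaviour. The wall carries a canonical packing of long induced cycles (its ``row'' cycles), and any $c$ of them must be linked by chords of $G$. Classifying the chord endpoints across pairs of such cycles should produce three regimes: \emph{(i)} chords are dense between two ``separated'' parts of the wall, which after cleaning yields an induced $K_{t,t}$; \emph{(ii)} chords concentrate within a bounded region, collecting enough mutually adjacent vertices to form an induced $K_t$; or \emph{(iii)} chords form a structured ``twist'' pattern between consecutive cycles, matching the definition of a full $s$-occultation. Along the way one continually passes to sub-walls so that all the extracted structure survives subsequent pigeonholing.

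The main obstacle will be executing case \emph{(iii)} cleanly. One must argue that a chord pattern that forbids $K_t$ and $K_{t,t}$ is already rigid enough to impose the very specific incidence structure of a full $s$-occultation, while simultaneously avoiding the creation of $c$ pairwise vertex-disjoint induced cycles. The ``full'' requirement is essentially a rigidity/uniformity condition on the subdivision lengths of the cycles in the occultation, so I would want to show that after Ramsey-cleaning the sub-wall the residual subdivision paths can be chosen of comparable length. I expect this rigidity step to be the technical core of the proof, likely relying on lemmas from earlier papers in the ``Induced subgraphs and tree decompositions'' series and probably benefiting from first proving a more general statement that absorbs the girth parameter into a joint induction on $c$ and $s$, rather than attacking Theorem~\ref{mainthm} directly.
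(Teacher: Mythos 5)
There is a genuine gap, and it sits exactly where you predict: case \emph{(iii)}. Your plan is to extract a subdivided wall as a subgraph via Theorem~\ref{wallminor} and then Ramsey-clean the chord pattern into three regimes, but the third regime is not a structured outcome that you can actually produce --- ``otherwise the chords form a twist pattern matching the definition of a full $s$-occultation'' is a restatement of the theorem, not an argument. Note also that you have misread what ``full'' means: it is not a rigidity or uniformity condition on subdivision lengths. A full $s$-occultation is an ample ordered asterism $(S,L)$ --- a stable set $S=\{\pi(1),\dots,\pi(s)\}$ attached to a single path $L$ --- in which each $\pi(i)$ has a neighbour in the interior of \emph{every} piece determined by $\pi(1),\dots,\pi(i-1)$ (the interrupted and invaded conditions \ref{INT} and \ref{INV}/\ref{OI}). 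Nothing in a chord pattern on wall rows naturally yields this recursive gap-hitting structure, and no mechanism in your sketch builds it; similarly, ``dense chords between separated parts yield an induced $K_{t,t}$ after cleaning'' hides genuine work (an induced complete bipartite graph needs control of edges inside each side, not just bipartite density). A further small confusion: the Grid Theorem hands you \emph{some} subdivision, and the girth-type comment about subdividing edges ``roughly $s$ times'' is neither available to you by fiat nor part of the definition of a full $s$-occultation (girth enters only in Corollary~\ref{mainthmgeneralcorollary}).

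The paper's proof deliberately avoids the wall route, essentially because a wall subdivision obtained as a subgraph gives almost no control over induced structure. Instead it uses Theorem~\ref{noblocksmalltw_wall} (bounded ``local connectivity'' classes are clean) to extract a strong $k$-block: since large subdivided walls and their line graphs are not $(c,o)$-perforated, a $c$-perforated graph of huge treewidth with no $K_t$ or $K_{t,t}$ must contain a strong block (Corollary~\ref{noblocksmalltw_polycyclefree}). From the block one builds, via patches/matches and Ramsey arguments on bundles (Section~\ref{sec:patch}, Theorem~\ref{thm:bundlethm}, with Dvo\v{r}\'ak's Theorem~\ref{dvorak} supplying the $K_t$/$K_{t,t}$ outcomes), a plain constellation: one ``vertex side'' complete in the neighbourhood sense to many pairwise anticomplete paths. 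The occultation is then grown inside a single path of the constellation using the transition graph and the matching/vertex-cover duality (Lemma~\ref{lem:getinterrupted}): a matching of size $c$ in the transition graph gives $c$ disjoint anticomplete long cycles, contradicting perforation, so a small vertex cover exists and its vertices play the role of $\pi(s)$, driving an induction on $s$; a separate induction on $c+s$ (Theorem~\ref{thm:interrupted_to_occultation}) upgrades ``interrupted'' to ``interrupted and invaded,'' i.e.\ full. Your sketch contains none of these intermediate structures or inductions, and I do not see how to complete it along the lines you propose.
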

We remark that, to the best of our knowledge, Theorem~\ref{mainthm} is the first of its kind, in the sense that it offers the first non-trivial grid-type theorem for a hereditary class which involves non-basic obstructions (we also note a result from \cite{woodcirclegraphs} which characterizes the induced subgraph obstructions to bounded treewidth in the class of ``circle graphs,'' though we view that more relatable in the context of ``vertex-minors.'') More generally, the main result of this paper, Theorem~\ref{mainthmgeneral}, is an extension of Theorem~\ref{mainthm} which completely describes, for all   $c,o\in \poi$, the obstructions to bounded treewidth in \textit{$(c,o)$-perforated graphs}, that is, graphs with no induced subgraph isomorphic to the disjoint union of $c$ cycles, each of length at least $o+2$. The extension is in fact direct enough that Theorems~\ref{mainthm} and \ref{mainthmgeneral} are identical when $o=1$.

It is also worth mentioning that 
$s$-occultations were constructed in \cite{deathstar} mainly to show that the bound in their main result, Theorem~\ref{thm:deathstarlog} below, is asymptotically sharp. In view of Theorem~\ref{thm:deathstarlog}, excluding complete graphs and complete bipartite graphs in $c$-perforated graphs \textit{does} make a difference by bringing the treewidth down to being logarithmic in the number of vertices. This is of almost equal algorithmic significance as bounded treewidth, and also puts $c$-perforated graphs on the short list of hereditary classes known to have logarithmic treewidth (see \cite{logpaper} for another example of such a class).
\begin{theorem}[Bonamy, Bonnet, D\'{e}pr\'{e}s, Esperet, Geniet, Hilaire, Thomass\'{e} and Wesolek \cite{deathstar}]\label{thm:deathstarlog}
    For all   $c,t\in \poi$, every $c$-perforated graph $G$ containing neither $K_t$ nor $K_{t,t}$ has treewidth at most $\mca{O}(\log |V(G)|)$.
\end{theorem}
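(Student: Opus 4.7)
I would deduce the theorem from a stronger statement: every $c$-perforated graph $G$ with no induced $K_t$ or $K_{t,t}$ admits a \emph{balanced separator} of size at most $f(c,t)$, that is, a set $S \subseteq V(G)$ with $|S| \leq f(c,t)$ such that every component of $G \setminus S$ has at most $|V(G)|/2$ vertices. Since this class is hereditary, the folklore recursive construction then yields a tree decomposition of width $O(f(c,t)\log |V(G)|)$, which is precisely the target bound.

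The balanced separators would be produced by induction on $c$. The base case $c=1$ is immediate, since $1$-perforated graphs are forests and any centroid vertex works. For the inductive step $c \ge 2$, I would start with a BFS from any vertex $v \in V(G)$, producing layers $L_0, L_1, \ldots, L_d$, and pick the median layer $L_r$, i.e., the smallest $r$ with $|L_0 \cup \cdots \cup L_r| \ge |V(G)|/2$. Then $L_r$ separates $V(G)$ into two halves, each of size at most $|V(G)|/2$; the issue is that $L_r$ may be huge. The core task is to extract a bounded-size balanced separator \emph{out of} $L_r$ (plus finitely many additional vertices).

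For this, I would combine $c$-perforatedness with $\{K_t, K_{t,t}\}$-freeness as follows. For any pair $x, y \in L_r$, joining a shortest $v$-$x$ path, a shortest $v$-$y$ path, and a path from $x$ to $y$ through $L_r$ yields an induced cycle. If one can build $c$ such cycles that are pairwise anticomplete in $G$, then $G$ contains the disjoint union of $c$ cycles as an induced subgraph, contradicting $c$-perforatedness. The failure to build them should force structural constraints: either a bounded set $D$ dominates a large portion of $L_r$, so that $G \setminus D$ breaks into smaller pieces on which one recurses, or else there exists an induced cycle $C$ with a small closed neighborhood, whence $G \setminus N[C]$ is $(c-1)$-perforated and the inductive hypothesis applies.

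The main obstacle I anticipate is the neighborhood-control step: an induced cycle $C$ can have a huge closed neighborhood $N[C]$, and unlike in the minor-closed setting one cannot contract $C$. Here the $K_{t,t}$-free assumption must be used to bound how vertices outside $C$ may attach to spread-out subsets of $C$, and the $K_t$-free assumption to control the clique structure along $C$. Turning this qualitative picture into a quantitative bound $f(c,t)$ depending only on $c$ and $t$ (and not on $|V(G)|$) is the technically delicate heart of the argument, and it is what ultimately drives the logarithmic bound in the conclusion.
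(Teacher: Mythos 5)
Theorem~\ref{thm:deathstarlog} is cited from \cite{deathstar} and not proved in the present paper, so there is no in-paper argument to compare against. Your reduction to bounded-size balanced separators, followed by the folklore $\mathcal{O}(k\log n)$ recursion for hereditary classes, is the right framing, and induction on $c$ with BFS layers is a plausible route. But the one concrete mechanism you supply is flawed: each cycle you describe consists of two shortest $v$-paths (to $x$ and $y$ in the median layer $L_r$) together with a path through $L_r$, so every such cycle passes through $v$ (or at least through the layer where the two shortest paths first diverge). Cycles built this way necessarily intersect, so you can never extract $c$ pairwise disjoint cycles from this construction, let alone pairwise anticomplete ones, and the intended contradiction with $c$-perforatedness never materializes. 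A workable variant would build each cycle locally, for instance from a vertex $u\in L_{r-1}$ together with a short path inside $L_r$ between two neighbors of $u$; but then disjointness, inducedness, and anticompleteness each need separate arguments, and none of this is addressed.

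More fundamentally, the dichotomy your induction relies on --- either a bounded set $D$ dominates a large chunk of $L_r$, or there is an induced cycle $C$ with $|N[C]|$ bounded by a function of $c$ and $t$ --- is itself the theorem's genuine content, and the proposal offers no mechanism for proving it. You correctly flag neighborhood control as ``the technically delicate heart of the argument,'' but as written the sketch identifies the right target without supplying the idea that closes the gap: there is no argument here for why excluding $K_t$ and $K_{t,t}$ actually forces one of the two horns of the dichotomy, and that implication is precisely what would need to be established.
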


\subsection{Occultations: first impression} 
In intuitive terms, an $s$-occulation is a graph consisting of $s$ pairwise non-adjacent vertices $x_1,\ldots, x_s$ and a path $L$ on $2^s+1$ vertices disjoint from $\{x_1,\ldots, x_s\}$, such that $x_1$ is adjacent to the middle vertex of $L$, and for each $2\leq i\leq s$,  the vertex $x_i$ is adjacent to the middle vertices of the ``gaps'' in $L$ created by the neighbors of $x_1,\ldots, x_i$ (see Figure~\ref{fig:deathstar}).

To make this precise, we need a few definitions. For an integer $n$, we write $[n]$ for the set all of positive integers less than or equal to $n$ (so $[n]=\emptyset$ if $n\leq 0$). Let $G$ be a graph. A \textit{stable set in $G$} is a set of pairwise non-adjacent vertices. A {\em path in $G$} is an induced subgraph of $G$ which is a path. If $P$ is a path in $G$, we write $P = p_1 \dd \cdots \dd p_k$ to mean that $V(P) = \{p_1, \dots, p_k\}$ and $p_i$ is adjacent to $p_j$ if and only if $|i-j| = 1$. We call the vertices $p_1$ and $p_k$ the \emph{ends of $P$}, and the \emph{interior of $P$} is the set $P \setminus \{p_1, p_k\}$. The \textit{length} of a path is its number of edges.

   Let us now formally define occultations and full occultations. Given $s\in \poi$, an \textit{$s$-occultation} is a graph $\mf{o}$ whose vertex set can be partitioned into a stable set $S$ in $\mf{o}$ of cardinality $s$ and a path $L$ in $\mf{o}$ with the following specifications.
\begin{enumerate}[(O1), leftmargin=15mm, rightmargin=7mm]
\item\label{O1} No two vertices in $S$ have a common neighbor in $L$. 
\item\label{O2} The ends of $L$ have no neighbor in $S$.
\item\label{O3} For some bijection $\pi:[s]\rightarrow S$, the following holds. Let $i\in [s]$ and let $P$ be a path in $L$ of non-zero length where
 \begin{itemize}
     \item every end of $P$ that is not an end of $L$ has a neighbor in $\pi([i-1])$; and
     \item no vertex in the interior of $P$ has a neighbor in $\pi([i-1])$.
 \end{itemize}
 Then $\pi(i)$ has \textbf{exactly} one neighbor in the interior of $P$. In particular, $\pi(i)$ has exactly $2^{i-1}$ neighbors in $L$ and $P$ has non-empty interior. Said differently, along $L$, $\pi(i)$ has exactly one neighbour ``between'' every two successive vertices which are either an end of $L$ or a neighbour of a vertex in $\pi([i-1])$.
 \item\label{O4} No vertex in $L$ has degree $2$ in $\mf{o}$. In particular, $L$ has length $2^s$.
\end{enumerate}
See Figure~\ref{fig:deathstar}.
\begin{figure}
\centering
\includegraphics[scale=0.8]{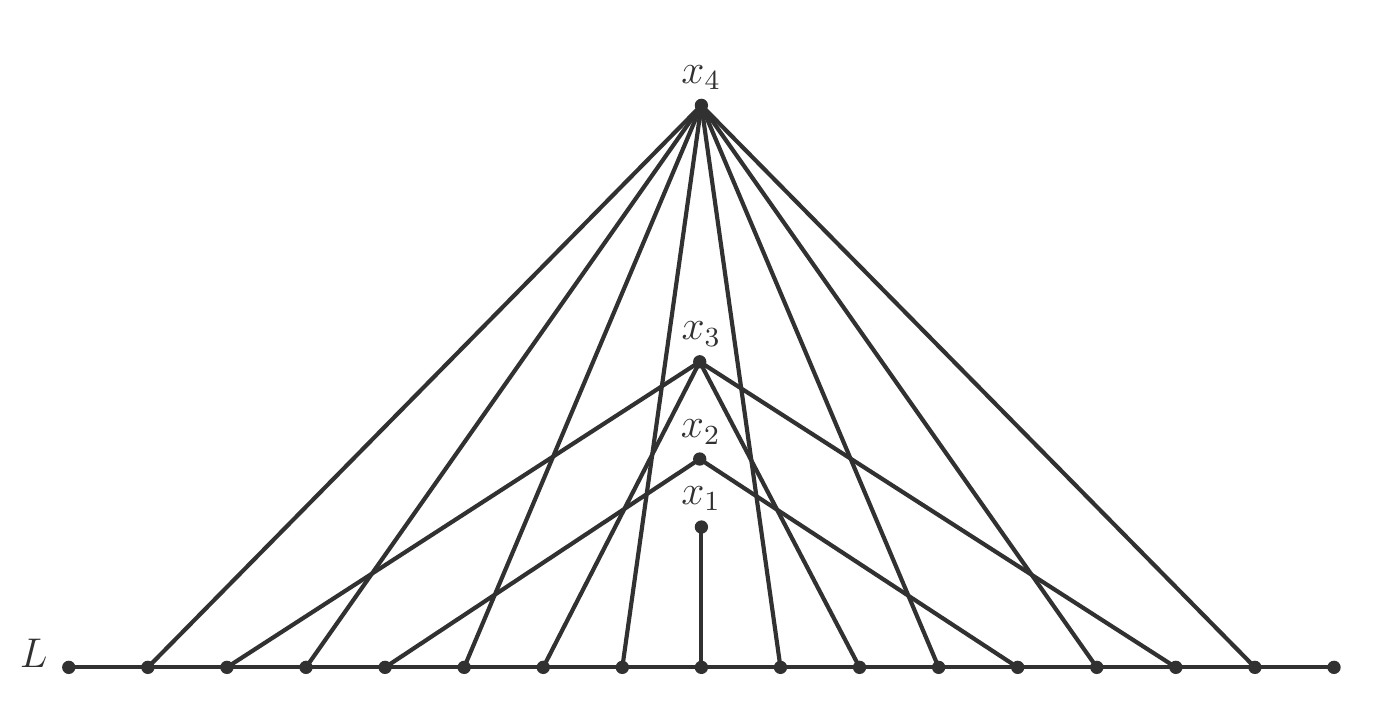}

\caption{A $4$-occultation where $\pi(i)=x_i$ for $i=1,2,3,4$.}
\label{fig:deathstar}
\end{figure}
As mentioned before, these graphs were introduced in \cite{deathstar} for the first time\footnote{Not verbatim. Also, in their version no vertex in $L$ is allowed to have degree $1$ in $\mf{o}$ (so it is obtained from our version by removing the ends of $L$). This clearly does not interfere with the validity of Theorem~\ref{thm:deathstarproperties}.} to provide a lower bound counterpart to Theorem~\ref{thm:deathstarlog}:

\begin{theorem}[Bonamy, Bonnet, D\'{e}pr\'{e}s, Esperet, Geniet, Hilaire, Thomass\'{e} and Wesolek \cite{deathstar}]\label{thm:deathstarproperties}
    For every   $s\in \poi$, every $s$-occultation is a $2$-perforated graph containing neither $K_3$ nor $K_{3,3}$ and with treewidth at least $s-1$.
\end{theorem}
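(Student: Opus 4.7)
My plan is to verify the four parts of Theorem~\ref{thm:deathstarproperties} separately. Fix an $s$-occultation $\mf{o}$ with stable set $S$, path $L = \ell_0 \dd \cdots \dd \ell_{2^s}$, and bijection $\pi$ as in \ref{O3}. A useful preliminary, immediate from \ref{O1}--\ref{O4}: each interior $\ell_j$ of $L$ has a unique $S$-neighbour.

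For the absence of $K_3$ and $K_{3,3}$ I would use short case analyses on how the vertices of the putative subgraph distribute across $S \cup L$. A $K_3$ would have to consist of one $\pi(i) \in S$ together with two consecutive $\ell_j, \ell_{j+1}$ (since $S$ is independent and $L$ is a path), but consecutive $L$-vertices have distinct $2$-adic valuations and hence distinct $S$-neighbours, a contradiction. For $K_{3,3}$: \ref{O1} forbids two $S$-vertices on the same side of the bipartition; $S$-independence forbids a single $S$-vertex on each side; a single $S$-vertex on only one side would force two $L$-vertices to have $L$-degree at least three, impossible; and no $S$-vertex on either side would embed $K_{3,3}$ into the path $L$.

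For $\tw(\mf{o}) \geq s-1$ I would display $K_s$ as a minor of $\mf{o}$. For $i \in [s]$ let $q_i$ be the smallest integer $j$ such that $\ell_j$ is adjacent to $\pi(i)$, and set $q_{s+1} := 0$. Iterating \ref{O3} in the leftmost $\pi([i-1])$-gap gives $q_1 > q_2 > \cdots > q_s > 0$. Define
\[
  B_i = \{\pi(i)\} \cup \{\ell_j : q_{i+1} < j \leq q_i\}, \qquad i \in [s].
\]
These sets are pairwise disjoint, and each is connected (the $L$-part is a sub-path containing $\ell_{q_i}$, which is adjacent to $\pi(i)$). For $j = i+1$, the $L$-edge $\ell_{q_{i+1}} \ell_{q_{i+1}+1}$ joins $B_i$ to $B_j$; for $j \geq i+2$ one checks, using the nesting of the $q_k$'s, that $(q_{i+1}, q_i)$ is a single $\pi([i+1])$-gap, so \ref{O3} applied to $\pi(j)$ yields a neighbour of $\pi(j)$ inside $B_i$. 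Hence $\mf{o}$ has $K_s$ as a minor and $\tw(\mf{o}) \geq \tw(K_s) = s-1$.

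The main obstacle is the $2$-perforation claim. Suppose for contradiction that $C_1, C_2$ are vertex-disjoint induced cycles in $\mf{o}$ with no edge of $\mf{o}$ between them, and set $i^* := \min\{i : \pi(i) \in C_1 \cup C_2\}$. Since $\pi(1)$ has degree one, $i^* \geq 2$; WLOG $\pi(i^*) \in C_1$. In the clean case that $C_1$ is a single-$\pi(i^*)$ cycle, its $L$-part is a contiguous sub-path $[a, b]$ with $\ell_a, \ell_b$ two consecutive $\pi(i^*)$-neighbours, and the same \ref{O3} counting as in the minor argument places a neighbour of any $\pi(j) \in C_2$ (necessarily $j > i^*$, by minimality and disjointness) inside the open interval $(a, b) \subseteq C_1$, producing the forbidden edge. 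The delicate step is when $C_1$ contains several $S$-vertices, so its $L$-part splits into several sub-intervals; the plan here is to induct on $s$, exploiting that each of the two ``halves'' of $L$ separated by $\pi(1)$'s unique $L$-neighbour, taken together with $\{\pi(2), \dots, \pi(s)\}$, induces an $(s-1)$-occultation in $\mf{o}$, and then doing case analysis on how $C_1$ and $C_2$ sit relative to these halves. The combinatorial bookkeeping to fit the cycles into the inductive framework is the main technical hurdle.
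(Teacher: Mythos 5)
A preliminary remark: the paper does not prove this statement at all --- it is quoted from \cite{deathstar} --- so the only in-paper benchmark is the proof of Theorem~\ref{thm:fulldeathstarproperties}\ref{fulldeathstarproperties_b}, which reruns the perforation argument for full occultations and cites the present theorem for the treewidth bound. Measured against that, three of your four parts are fine. The $K_3$ and $K_{3,3}$ case analyses are correct (provided you record the routine induction from \ref{O3} and \ref{O4} showing that the neighbours of $\pi(i)$ sit exactly at the positions of $2$-adic valuation $s-i$, which your ``valuation'' arguments presuppose), and the $K_s$-minor construction works: the sets $B_i$ are disjoint and connected, consecutive ones meet along a path edge, and for $j\geq i+2$ the interval $(q_{i+1},q_i]$ contains $2^{j-i-1}\geq 2$ consecutive multiples of $2^{s-j}$, hence an odd one, so $\pi(j)$ has a neighbour in $B_i$; thus $\tw(\mf{o})\geq s-1$.

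The genuine gap is the $2$-perforation claim, and it is the one you flag yourself: when the cycle containing $\pi(i^*)$ meets $S$ in more than one vertex you offer only a plan, and that plan is shaky as stated, because cycles through $\pi(2),\pi(3),\dots$ can straddle the middle vertex of $L$, so neither $C_1$ nor $C_2$ need lie in one of the two halves and the proposed induction on $s$ does not directly apply; the case you leave open is essentially the whole content of the claim. The difficulty vanishes if you take the extremal index the other way, exactly as the paper does for full occultations: let $i$ be \emph{maximum} with $\pi(i)\in C_1\cup C_2$, say $\pi(i)\in C_2$, and let $P$ be any connected component of $C_1\cap L$ (nonempty, since no cycle lies inside the induced path $L$ or the stable set $S$). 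If $P$ is a single vertex, its two neighbours on the cycle $C_1$ are two distinct vertices of $S$ adjacent to it, contradicting \ref{O1}. Otherwise both ends of $P$ have neighbours in $C_1\cap S\subseteq \pi([i-1])$ (here the maximality of $i$ and disjointness are used), so walking along $P$ from one end to the first vertex having a neighbour in $\pi([i-1])$ yields a path of non-zero length satisfying the hypotheses of \ref{O3} for index $i$; hence $\pi(i)$ has a neighbour in its non-empty interior, which lies in $C_1$ while $\pi(i)\in C_2$, contradicting anticompleteness. No induction and no case distinction on the number of $S$-vertices of $C_1$ is needed (for exact occultations \ref{O3} applies to every such subpath, so even the open/closed dichotomy of the full-occultation proof disappears); your minimum-index choice is what forces the unresolved bookkeeping.
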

Even so, the occultations defined above are not quite ready yet to be admitted as an outcome of a grid-type theorem for induced subgraphs. For instance, note that subdividing the edges of a graph $G$ does not preserve subgraphs and induced subgraphs of $G$, while it does not change the treewidth of $G$ either. This is why the ``subgraph version'' of Theorem~\ref{wallminor} as well as results concerning clean classes like Theorem~\ref{thm:tw7}  deal with subdivided walls, as opposed to the ``minor version'' of Theorem~\ref{wallminor} which involves bona fide walls. In the same vein, one may subdivide the edges of the path $L$ in an occultation $\mf{o}$, and observe that the resulting graph still satisfies Theorem~\ref{thm:deathstarproperties}. This, in other words, shows that we cannot require \ref{O4} of an occultation-like obstruction which is expected to yield bounded treewidth when forbidden (in conjunction with the basic obstructions). Similarly, when it comes to extracting an occultation as an induced subgraph in a graph of large treewidth, the word ``exactly'' in \ref{O3} is too much to ask.

Luckily, these two turn out to be the only culprits: we may define our ``full occultations'' to appear in Theorem~\ref{mainthm} as graphs with literally the same definition as occultations, except the condition \ref{O4} must be lifted, and the word ``exactly'' in \ref{O3} must be replaced by ``at least.'' More precisely, for $s  \in \poi \cup \{0\}$, a \textit{full $s$-occultation} is a graph $\mf{o}$ the vertex set of which can be partitioned into a stable set $S$ of cardinality $s$ and a path $L$ in $\mf{o}$ with the following specifications.
\begin{enumerate}[(FO1), leftmargin=17mm, rightmargin=7mm]
\item\label{FO1} No two vertices in $S$ have a common neighbor in $L$. 
\item\label{FO2} The ends of $L$ have no neighbor in $S$.
 \item\label{FO3} For some bijection $\pi:[s]\rightarrow S$, the following holds. Let $i\in [s]$ and let $P$ be a path in $L$ of non-zero length where
 \begin{itemize}
     \item each end of $P$ that is not an end of $L$ has a neighbor in $\pi([i-1])$; and
     \item no vertex in the interior of $P$ has a neighbor in $\pi([i-1])$.
 \end{itemize}
 Then $\pi(i)$ has \textbf{at least} one neighbor in the interior of $P$. In particular, $\pi(i)$ has a neighbor in $L$ and $P$ has non-empty interior.
\end{enumerate}
See Figure~\ref{fig:fulldeathstar}. 
\begin{figure}
\centering

\includegraphics[scale=0.8]{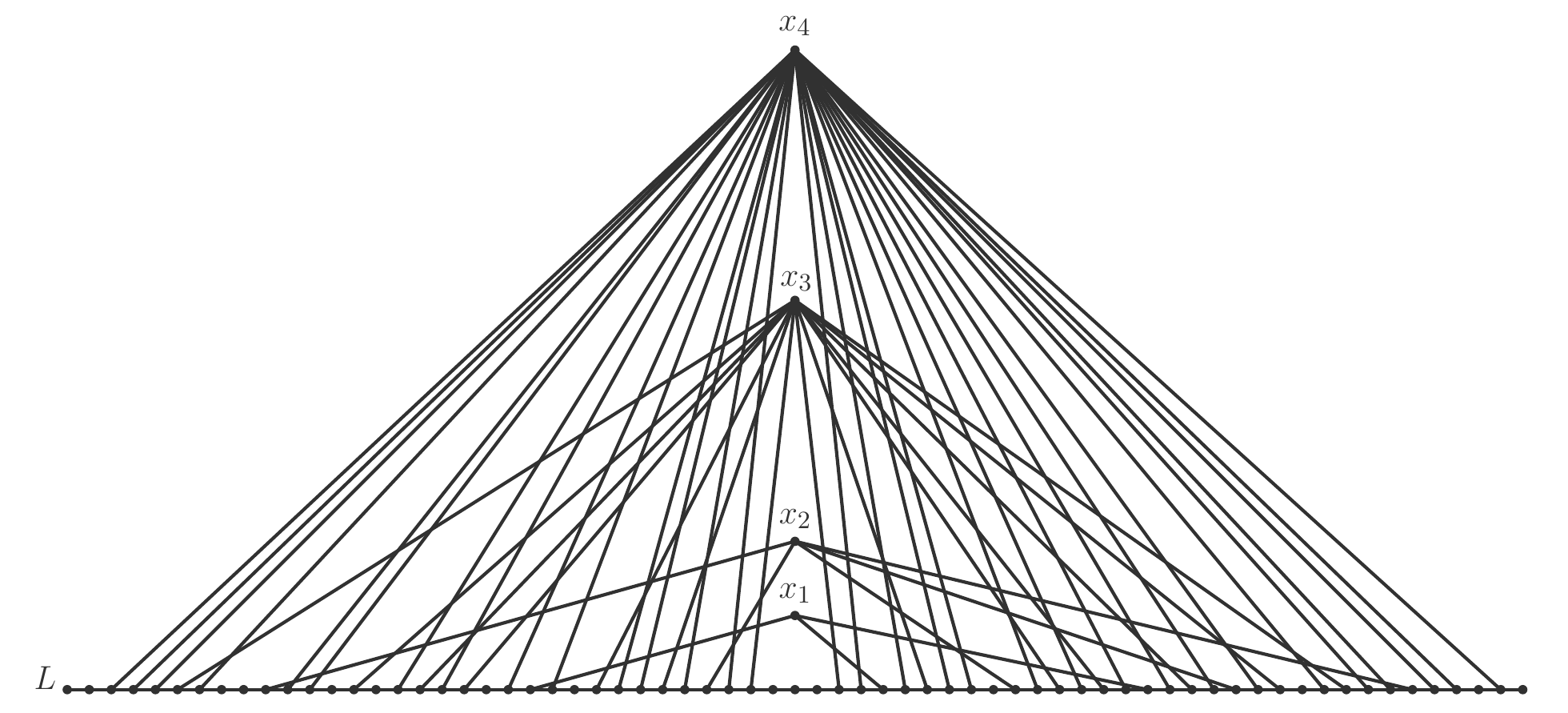}

\caption{A full $4$-occultation with $\pi(i)=x_i$ for $i=1,2,3,4$.}
\label{fig:fulldeathstar}
\end{figure}
As we will show in Theorem~\ref{thm:fulldeathstarproperties}, the  difference between occultations and full occultations is mild enough that it has almost no effect on Theorem~\ref{thm:deathstarproperties} remaining true for full occultations.
In fact, we will define full occultations once again in Section~\ref{sec:asterism} using ``asterisms,'' a term we employ extensively in this paper as it is a great technical fit to our proofs. This in particular allows for upgrading to a parametrized relaxation of full $s$-occultations called ``full $(s,o)$-occultations,'' which we then show to be the right substitute for full $s$-occultations in extending Theorem~\ref{mainthm} to Theorem~\ref{mainthmgeneral}.

 \subsection{Outline of the proof}\label{sec:outline} We now briefly describe our proof ideas and the organization of the paper. Broadly speaking, the proof of Theorem~\ref{mainthm} (or Theorem~\ref{mainthmgeneral}, rather) consists of three steps. Let $G$ be a $c$-perforated graph of sufficiently large treewidth which contains neither $K_t$ nor $K_{t,t}$. We wish to show that $G$ contains a full $s$-occultation.
 
  First, we show that $G$ contains a very large ``constellation,'' that is, a complete bipartite induced minor model where each ``branch set'' on one ``side'' is a vertex and each branch set on the other side is a path in $G$. To that end, we invoke a useful result from an earlier paper in this series \cite{twvii}, namely Theorem~\ref{noblocksmalltw_wall}, which, in essence, says that every graph class of bounded ``local connectivity'' is clean. This allows us to obtain, for a very large integer $M$, a collection of $M$ pairwise disjoint induced subgraphs $G_1,\ldots, G_M$ of $G$ such that for each $i$, one may find in $G_i$ two vertices $x_i$ and $y_i$ as well as, for another large integer $m$, a collection of $m$ pairwise internally disjoint (long) paths in $G_i$ from $x_i$ to $y_i$. From here, the proof goes on a roller coaster ride of Ramsey-type arguments to show that if $G$ excludes the desired constellation, then there are $c$ distinct $G_i$'s in each of which one may find a (long) induced cycle $H_i$, with no edges in $G$ between distinct $H_i$'s. But this violates the assumption that $G$ is $c$-perforated.

 Next, we show that if $G$ contains a huge constellation, then $G$ contains an approximate version of a full occultation called an ``interrupted asterism,'' in which \ref{FO3} is guaranteed to hold only if the ends of the path $P$ have no common neighbor in $S$. Here is an intuitive exposition of the argument: note that for each path $L$ in the ``path side'' $\mca{L}$ of the constellation, one may define a graph on the ``vertex side'' $V$ called the ``transition graph'' by making two vertices in $V$ adjacent if there is a path $R$ in $G$ between them with its interior contained in $L$ such that no other vertex in $V$ has a neighbor in the interior of $R$. Then we can easily arrange for most paths in $\mca{L}$ to impose the same transition graph $\mathsf{T}$ on $V$. There are now two possibilities. If $\mathsf{T}$ contains a matching of cardinality $c$, then the union of corresponding paths through two distinct elements of $\mca{L}$ gives a collection of $c$ pairwise disjoint cycles in $G$ with no edges between them, which is impossible. It follows that $\mathsf{T}$ admits a small vertex cover $X\subseteq V$. From the definition of the transition graph, every path between two distinct vertices in $V\setminus X$ must be ``interrupted'' by a neighbor of a vertex in $X$. But now $X$ starts to behave like the vertex $\pi(s)$ from the definition of a full $s$-occultation (up to the above-mentioned relaxation of \ref{FO3}), which we can delete and move on by induction on $s$. This of course takes much more work to make precise.

 Third, we turn the approximate full occultation obtained in the second step into a genuine one. To accomplish this, we need the ``interrupted'' asterism to be ``invaded'' as well, which means \ref{FO3} is now required to hold even if the ends of $P$ have a common neighbor in $S$. Otherwise, the union of $P$ and the common neighbor of its ends in $S$ form an induced cycle $H$ in $G$, and we can convince a ``big'' portion of the rest of the asterism to be disjoint from $H$ and have no neighbors in $H$, and then proceed by induction on $c$. This makes the last step, again, modulo a fair body of details to be checked, relatively easier than the previous two.

 The technical circumstances of our proofs, however, demand for these three steps to be taken in reverse order in Sections~\ref{sec:cherry}, \ref{sec:surgery} and \ref{sec:patch}, while Section~\ref{sec:bundle} will be our Ramsey workout to prepare for the next two sections. In Section~\ref{sec:prilim}, we set up our terminology and gather a few results from the literature to be used in subsequent sections. Section~\ref{sec:asterism} is devoted to asterisms and the statement of our main result, Theorem~\ref{mainthmgeneral}. Finally, in Section~\ref{sec:end}, we complete the proof of Theorem~\ref{mainthmgeneral}.
 
\section{Preliminaries}\label{sec:prilim}
Let $G = (V(G),E(G))$ be a graph. For $X \subseteq V(G)\cup E(G)$, $G \setminus X$ denotes the subgraph of $G$ obtained by removing $X$. Note that if $X\subseteq V(G)$, then $G \setminus X$ denotes the subgraph of $G$ induced by $V(G)\setminus X$.  

Let $P$ be a path in $G$. We denote by $P^*$ the interior of $P$ and by $\partial P$ the set of ends of $P$. Similarly, for a collection $\mca{P}$ of paths in $G$, we adapt the notations $V(\mca{P})=\bigcup_{P\in \mca{P}}V(P)$, $\mca{P}^*=\bigcup_{P\in \mca{P}}P^*$ and $\partial \mca{P}=\bigcup_{P\in \mca{P}}\partial P$.

A {\em cycle in $G$} is an induced subgraph of $G$ that is a cycle. If $H$ is a cycle in $G$, we write $H= c_1 \dd \cdots \dd c_k\dd c_1$ to mean that $V(C) = \{c_1, \dots, c_k\}$ and $c_i$ is adjacent to $c_j$ if and only if $|i-j|\in \{1,k-1\}$. The \textit{length} of a cycle is its number of edges.

Let $x\in V(G)$.
We denote by $N_G(x)$ the set of all neighbors of $x$ in $G$, and by $N_G[x]$ the set $N_G(x)\cup \{x\}$. For an induced subgraph $H$ of $G$, we define $N_H(x)=N_G(x) \cap H$, $N_H[x]=N_G[x]\cap H$. Also, for $X\subseteq G$, we denote by $N_G(X)$ the set of all vertices in $G\setminus X$ with at least one neighbor in $X$, and define $N_G[X]=N_G(X)\cup X$. 

Let $X,Y \subseteq V(G)$ be disjoint. We say $X$ is \textit{complete} to $Y$ if all edges with an end in $X$ and an end in $Y$ are present in $G$, and $X$ is \emph{anticomplete}
to $Y$ if no edges between $X$ and $Y$ are present in $G$. 

By a \textit{subdivision} of $G$, we mean a graph $G'$ obtained from $G$ by replacing the edges of $G$ by pairwise internally disjoint paths of non-zero length between the corresponding ends. Let $r  \in \poi \cup \{0\}$. A $(\leq r)$-\textit{subdivision} of $G$ is a subdivision of $G$ in which the path replacing each edge has length at most  $r+1$.

Let us now mention a few results from the literature, beginning with two versions of Ramsey's Theorem. Given a set $X$ and $q  \in \poi \cup \{0\}$, we denote 
by $2^X$ the power set of $X$ and by $\binom{X}{q}$ the set of all $q$-subsets of $X$.

\begin{theorem}[Ramsey \cite{multiramsey}]\label{multiramsey}
For all   $n \in \poi \cup \{0\}$ and $q,r\in \poi$, there is a constant $\rho(n,q,r)\in \poi$ with the following property. Let  $U$ be a set of cardinality at least $\rho(n,q,r)$ and let $W$ be a non-empty set of cardinality at most $r$. Let $\Phi:\binom{U}{q}\rightarrow W$ be a map. Then there exist $i\in W$ and $Z\subseteq U$ with $|Z|=n$ such that for every $q$-subset $A$ of $Z$, we have $\Phi(A)=i$.
\end{theorem}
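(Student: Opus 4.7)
The plan is to proceed by induction on $q$, following the classical hypergraph Ramsey argument.

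For the base case $q=1$, the map $\Phi:U\to W$ partitions $U$ into at most $r$ color classes, so if $|U|\geq(n-1)r+1$ then some class has cardinality at least $n$; this produces the required $Z$, and we may take $\rho(n,1,r)=(n-1)r+1$.

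For the inductive step, assume the statement for $q-1$ and all values of $n$. The strategy is to iteratively build a sequence of distinct elements $u_1,u_2,\ldots,u_N$ of $U$ (for a suitably chosen $N$) together with nested sets $U=U_0\supseteq U_1\supseteq\cdots\supseteq U_N$ and colors $c_1,\ldots,c_N\in W$ such that for each $k\in[N]$ we have $u_k\in U_{k-1}$, $U_k\subseteq U_{k-1}\setminus\{u_k\}$, and $\Phi(A'\cup\{u_k\})=c_k$ for every $(q-1)$-subset $A'\subseteq U_k$. At step $k$, one picks $u_k\in U_{k-1}$ arbitrarily, defines $\Phi_k:\binom{U_{k-1}\setminus\{u_k\}}{q-1}\to W$ by $\Phi_k(A')=\Phi(A'\cup\{u_k\})$, and applies the inductive hypothesis for $q-1$ to $\Phi_k$ to obtain a monochromatic $U_k$ of the desired size with color $c_k$. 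The constant $\rho(n,q,r)$ is then defined by a downward recursion on the required sizes $|U_k|$: set $N=(n-1)r+1$, demand $|U_N|\geq 1$, and enforce $|U_{k-1}|\geq\rho(|U_k|,q-1,r)+1$ at each step.

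The crucial observation is that for any indices $1\leq i_1<\cdots<i_q\leq N$, the nesting $U_0\supseteq U_1\supseteq\cdots$ together with $u_j\in U_{j-1}$ guarantees $\{u_{i_2},\ldots,u_{i_q}\}\subseteq U_{i_1}$, and hence $\Phi(\{u_{i_1},\ldots,u_{i_q}\})=\Phi_{i_1}(\{u_{i_2},\ldots,u_{i_q}\})=c_{i_1}$. In other words, the color of a $q$-subset of $\{u_1,\ldots,u_N\}$ depends only on its smallest-indexed element. Applying pigeonhole to $c_1,\ldots,c_N\in W$ with $N=(n-1)r+1$ yields a color $c\in W$ and indices $j_1<\cdots<j_n$ with $c_{j_\ell}=c$ for every $\ell\in[n]$; then $Z=\{u_{j_1},\ldots,u_{j_n}\}$ satisfies $\Phi(A)=c$ for every $A\in\binom{Z}{q}$, completing the induction.

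The main obstacle is purely bookkeeping: the recursive definition of $\rho(n,q,r)$ is an iterated composition of the $(q-1)$-level Ramsey function applied $N$ times, producing a bound that grows roughly like a tower of height $q$. No new idea is required beyond careful tracking of these nested applications, and because the theorem is quoted here as a tool rather than an innovation, giving an explicit bound is not needed.
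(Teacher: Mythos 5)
Your proof is correct: it is the standard inductive argument for the hypergraph (set) version of Ramsey's theorem, building a sequence $u_1,\dots,u_N$ with nested monochromatic-for-the-next-level sets $U_k$, observing that the color of a $q$-set depends only on its least index, and finishing by pigeonhole. Note, though, that the paper gives no proof of this statement at all — it simply cites Ramsey's 1929 paper — so there is no ``paper proof'' to compare against; your argument is the textbook one and is fine. One trivial edge case to tidy: your base-case formula $\rho(n,1,r)=(n-1)r+1$ is nonpositive when $n=0$, so you should cap it at $1$ (the conclusion for $n=0$ is vacuous, taking $Z=\emptyset$ and any $i\in W$).
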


\begin{theorem}[Graham, Rothschild and Spencer \cite{productramsey}, see also \cite{KP}]\label{productramsey}
For all   $n \in \poi \cup \{0\}$ and $q,r\in \poi$, there is a constant $\nu(n,q,r)\in \poi$ with the following property. Let $U_1,\ldots, U_n$ be $n$ sets, each of cardinality at least $\nu(n,q,r)$ and let $W$ be a non-empty set of cardinality at most $r$. Let $\Phi$ be a map from the Cartesian product $U_1\times \cdots \times U_n$ into $W$. Then there exist $i\in W$ and $Z_j\subseteq U_j$ with $|Z_j|=q$ for each $j\in [n]$, such that for every $z\in Z_1\times \cdots\times Z_n$, we have $\Phi(z)=i$.
\end{theorem}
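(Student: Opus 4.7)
My plan is to prove Theorem~\ref{productramsey} by induction on $n$, with the standard pigeonhole principle as the engine. The base case $n=0$ is vacuous (the empty Cartesian product has a single element, which $\Phi$ sends to some $i\in W$), and $n=1$ is pigeonhole: any map from a set of size at least $(q-1)r+1$ into $W$ attains some value at least $q$ times, so $\nu(1,q,r)=(q-1)r+1$ suffices.

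For the inductive step, the strategy is to slice off the last coordinate and apply the hypothesis for $n-1$ variables. The subtle point is that $\nu(n,q,r)$ must serve as a uniform lower bound for all $|U_j|$ simultaneously, so the threshold on $|U_n|$ cannot be allowed to grow with $|U_1|,\ldots,|U_{n-1}|$. The fix is to restrict to subsets $U'_j\subseteq U_j$ of cardinality exactly $\nu(n-1,q,r)$ for each $j\in[n-1]$ \emph{before} slicing. Then for every $u_n\in U_n$, the restricted coloring $\Phi(\cdot,u_n):U'_1\times\cdots\times U'_{n-1}\to W$ satisfies the inductive hypothesis and yields $q$-subsets $Z_1(u_n),\ldots,Z_{n-1}(u_n)$ of $U'_1,\ldots,U'_{n-1}$ together with a color $i(u_n)\in W$ on which $\Phi(\cdot,u_n)$ is constant.

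The second step is a pigeonhole on $U_n$ applied to the ``type'' map $u_n\mapsto(Z_1(u_n),\ldots,Z_{n-1}(u_n),i(u_n))$, whose codomain has size at most $\binom{\nu(n-1,q,r)}{q}^{n-1}\cdot r$, a bound independent of the $|U_j|$. Choosing $\nu(n,q,r)$ to exceed $(q-1)$ times this number of types (and also to be at least $\nu(n-1,q,r)$) ensures a set $Z_n\subseteq U_n$ of $q$ values sharing a common type $(Z_1,\ldots,Z_{n-1},i)$, and $\Phi\equiv i$ on $Z_1\times\cdots\times Z_n$ by construction.

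I do not foresee any serious obstacle: the argument is a two-layer pigeonhole, and the only point that requires care is the order of restrictions (shrink the first $n-1$ sets to a uniform small size, then pigeonhole on the last coordinate). This keeps the recursion dependent only on $\nu(n-1,q,r)$, $q$, and $r$, so $\nu(n,q,r)$ grows tower-like in $n$ but remains finite, which is all that the statement requires.
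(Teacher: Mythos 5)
Your argument is correct. The paper does not prove Theorem~\ref{productramsey} at all -- it is quoted as a known result of Graham, Rothschild and Spencer -- so there is no internal proof to compare against; what you give is a valid self-contained proof of the stated version. The two-layer pigeonhole is exactly the standard argument for this ``box'' form of the product Ramsey theorem: the base cases are fine ($\nu(1,q,r)=(q-1)r+1$ works since $|W|\le r$), and in the inductive step your key precaution -- shrinking $U_1,\ldots,U_{n-1}$ to subsets of size exactly $\nu(n-1,q,r)$ \emph{before} fibering over $U_n$ -- is precisely what keeps the number of possible types bounded by $\binom{\nu(n-1,q,r)}{q}^{n-1}\cdot r$, independently of the ambient set sizes, so that $\nu(n,q,r)=\max\bigl\{\nu(n-1,q,r),\,(q-1)\binom{\nu(n-1,q,r)}{q}^{n-1}r+1\bigr\}$ suffices. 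The only remark worth making is that the theorem in Graham--Rothschild--Spencer is usually stated more generally, for colorings of products $\binom{U_1}{k_1}\times\cdots\times\binom{U_n}{k_n}$, where the proof routes through Ramsey's theorem rather than bare pigeonhole; the paper only uses the case where each coordinate is a single element, and for that case your elementary induction is all that is needed.
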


We also need the following result, which is a direct consequence of Theorem 5 in  \cite{dvorak}. See also Theorem 3 in \cite{lozin}, which has been discovered after --  and follows from -- Theorem 5 in  \cite{dvorak}.

\begin{theorem}[Dvo\v{r}\'{a}k, see Theorem 5 in \cite{dvorak}, Lozin and Razgon, see Theorem 3 in \cite{lozin}]\label{dvorak}
For every graph $H$ and all   $d \in \poi \cup \{0\}$ and $t\in \poi$, there is a constant $m=m(H,d,t)\in \poi$ with the following property. Let $G$ be a graph with no induced subgraph isomorphic to a subdivision of $H$. Assume that $G$ contains a $(\leq d)$-subdivision of $K_m$ as a subgraph. Then $G$ contains either $K_t$ or $K_{t,t}$.
\end{theorem}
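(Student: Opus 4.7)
The plan is to take a $(\leq d)$-subdivision $F\subseteq G$ of $K_m$ with $m$ sufficiently large (as a function of $H$, $d$, $t$), and to iteratively apply Ramsey reductions to simplify it, until either the remaining structure is an \emph{induced} $(\leq d)$-subdivision of $K_n$ with $n\geq |V(H)|$ (which contains an induced subdivision of $H$ and contradicts the hypothesis), or some uniformly appearing adjacency pattern lets us exhibit a $K_t$ or $K_{t,t}$.

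Let $B$ denote the branch-vertex set of $F$, of size $m$, and for each $\{u,v\}\in\binom{B}{2}$ let $P_{uv}\subseteq F$ be the path of $F$ joining $u$ and $v$, of length at most $d+1$. Colour each pair $\{u,v\}$ by the length of $P_{uv}$ and apply Theorem~\ref{multiramsey} with $q=2$, $r=d+1$, to obtain a large $B_1\subseteq B$ on which all paths have a common length $\ell\leq d+1$. Since $G$ may contain chords within $V(P_{uv})$, apply Theorem~\ref{multiramsey} again with $q=2$ and colour $\{u,v\}$ by the chord pattern of $G[V(P_{uv})]$ (a finite set depending only on $\ell$), obtaining $B_2\subseteq B_1$ on which this pattern is uniform. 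Replacing each $P_{uv}$ by the shortest induced $u$-$v$ path in $G[V(P_{uv})]$ yields \emph{induced} paths of a common length $\ell'\leq \ell$. If $\ell'=1$ then $B_2$ already induces $K_{|B_2|}$, and we are done.

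Assuming $\ell'\geq 2$, we uniformize the cross-structure. Apply Theorem~\ref{multiramsey} with $q=3$, colouring $\{u,v,w\}$ by the joint adjacency pattern of $w$ to $P_{uv}^*$, $v$ to $P_{uw}^*$ and $u$ to $P_{vw}^*$; then apply it with $q=4$, colouring $\{u_1,u_2,u_3,u_4\}$ by the triple of bipartite adjacency patterns arising from the three possible disjoint-pair arrangements of this quadruple. We end up with $B_3\subseteq B_2$ on which every such pattern is constant. If all the uniform patterns are empty (no edges from a branch vertex to the interior of a non-incident path, no edges between interiors of disjoint paths), then $F$ restricted to $B_3$ is an induced $(\leq d)$-subdivision of $K_{|B_3|}$ in $G$; choosing $|B_3|\geq|V(H)|$, the fact that every graph on $n$ vertices is a topological minor of $K_n$ yields an induced subdivision of $H$ in $G$, contradicting the hypothesis.

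The hard part is therefore the case where some uniform cross-pattern is non-empty. Suppose, for instance, that the vertex at position $i$ of $P^*_{uv}$ and the vertex at position $j$ of $P^*_{u'v'}$ are always adjacent whenever $\{u,v\},\{u',v'\}\subseteq B_3$ are disjoint. Greedily choose $2t$ pairwise disjoint pairs $\{u_a,v_a\}\subseteq B_3$, and for each $a\in[2t]$ let $x_a$ and $y_a$ be the vertices at positions $i$ and $j$ of $P_{u_av_a}$. When $i\neq j$, for all $a\neq b$ the edges $x_ay_b$ and $y_ax_b$ are forced, so $\{x_a:a\in[2t]\}$ and $\{y_a:a\in[2t]\}$ span a copy of $K_{2t,2t}$ minus a perfect matching, which contains $K_{t,t}$. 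When $i=j$ we set $z_a=x_a=y_a$ and the $z_a$'s form a clique containing $K_t$. An entirely analogous argument handles non-trivial branch-to-path patterns, pairing branch vertices with interior vertices of non-incident paths. These cases exhaust the possibilities, completing the sketch.
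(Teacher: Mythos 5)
This theorem is quoted by the paper from Dvo\v{r}\'{a}k \cite{dvorak} and Lozin--Razgon \cite{lozin}; no proof appears in the paper, so there is no in-text argument to compare against. Your iterated Ramsey uniformization is nevertheless the standard line of attack for statements of this kind.

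There is, however, a genuine gap in your reduction. Your colourings uniformize path lengths and internal chord patterns ($q=2$), branch-vertex-to-non-incident-path-interior adjacencies ($q=3$), and adjacencies between interiors of two paths whose branch-vertex pairs are disjoint ($q=4$). You never uniformize the adjacencies between interiors of two paths that \emph{share} a branch vertex, that is, between $P^*_{uv}$ and $P^*_{uw}$ for $v\neq w$. Such edges are not excluded by anything you do, so even in your ``all uniform patterns empty'' case, $F$ restricted to $B_3$ need not be induced and the contradiction with the hypothesis does not follow. The omission is fixable by enlarging the $q=3$ colouring to record, for each ordered triple $u<v<w$, the full bipartite adjacency pattern between $P^*_{uv}$ and $P^*_{uw}$, between $P^*_{uv}$ and $P^*_{vw}$, and between $P^*_{uw}$ and $P^*_{vw}$. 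But the resulting non-empty case is \emph{not} ``entirely analogous'' to the disjoint-pairs case: fixing a common branch vertex $u$ and taking $v_1<\dots<v_{2t}$, the uniform pattern only controls edges from position $i$ of $P^*_{uv_a}$ to position $j$ of $P^*_{uv_b}$ for $a<b$ (a ``half'' bipartite pattern), so one must take the first $t$ indices on one side and the last $t$ on the other, and must separately treat diagonal patterns $(i,i)$, which yield a clique rather than a biclique. A smaller omission: since ``contains'' in the paper means as an induced subgraph, to produce $K_{t,t}$ you should also note that the uniform pattern already decides whether $\{x_a\}$ and $\{y_a\}$ are stable (if not, you already have $K_t$), and that the possible edges $x_ay_a$ can be dodged by drawing the two sides from disjoint index blocks.
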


Let $k\in \poi$ and let $G$ be a graph. A \textit{strong $k$-block} in $G$ is a set $B$ of at least $k$ vertices in $G$ such that for every $2$-subset $\{x,y\}$ of $B$, there exists a collection $\mca{P}_{\{x,y\}}$ of at least $k$ distinct and pairwise internally disjoint paths in $G$ from $x$ to $y$, where for every two distinct $2$-subsets $\{x,y\}, \{x',y'\}\subseteq B$ of $G$, we have $V(\mca{P}_{\{x,y\}})\cap V(\mca{P}_{\{x',y'\}})=\{x,y\}\cap \{x',y'\}$.

In \cite{twvii}, with Abrishami we proved the following:

\begin{theorem}[Abrishami, Alecu, Chudnovsky, Hajebi and Spirkl \cite{twvii}]\label{noblocksmalltw_wall}
  For every $k\in \poi$, the class of all graphs with no strong $k$-block is clean. Equivalently, for all $k,t\in \poi$, there is a constant $w=w(k,t)\in \poi$ such that every $t$-clean graph with no strong $k$-block has treewidth at most $w$.
\end{theorem}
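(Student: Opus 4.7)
My plan is to prove the contrapositive: if $G$ is $t$-clean and $\tw(G) > w(k,t)$ for a suitable function $w$, then $G$ contains a strong $k$-block. The strategy is to extract a large subdivided-wall skeleton inside $G$, sanitize it via Ramsey-style arguments, and then either uncover an induced $t$-basic obstruction (contradicting cleanness) or route the desired disjoint paths directly within the skeleton.

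First, I would apply Theorem~\ref{wallminor} to obtain in $G$ a subdivision $W$ of $W_{T \times T}$ as a subgraph, with $T$ chosen as a tower function of $k$ and $t$. Call an edge of $G$ with both endpoints in $V(W)$ that is not an edge of $W$ a \emph{chord}. Using repeated applications of Theorems~\ref{multiramsey} and \ref{productramsey} to the pattern with which chord endpoints attach to rows, columns, and subdivided edges of $W$, I would pass to a large subwall $W' \subseteq W$ on which the chord configuration is uniform: all chords of a given ``type'' appear between analogous coordinate positions.

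Next comes a trichotomy on the stabilized chord pattern. If the chords are dense enough that $W'$ together with them contains a $(\leq d)$-subdivision of $K_m$ as a subgraph for $m = m(W_{t\times t}, d, t)$ provided by Theorem~\ref{dvorak}, then — since $G$ has no induced subdivision of $W_{t\times t}$ — Theorem~\ref{dvorak} delivers $K_t$ or $K_{t,t}$ in $G$, contradicting $t$-cleanness. If the uniform pattern assembles into an induced line graph of a subdivision of $W_{t\times t}$, we once again contradict cleanness. Otherwise, chords are sparse and ``routable,'' and I would select $k$ widely spaced branch vertices $b_1,\ldots,b_k$ of $W'$, assign to each pair $\{b_i,b_j\}$ a disjoint rectangular corridor of $W'$, and route $k$ pairwise internally disjoint paths between $b_i$ and $b_j$ through that corridor, using the brick structure of the wall to enforce internal disjointness within a pair and the corridor separation to enforce disjointness across different pairs. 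Together these form a strong $k$-block.

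The principal obstacle is the dichotomy step: arguing that $t$-cleanness really forces the chord pattern into a usable shape. A priori, chords could be arranged to block all disjoint routings while simultaneously evading the four basic obstructions; dissolving this apparent deadlock demands multi-round Ramsey stabilization combined with a meticulous case analysis of how chords can attach to a three-regular wall. This is where I expect the bulk of the technical work to lie, and the resulting bound $w(k,t)$ will accordingly be an iterated tower in both parameters.
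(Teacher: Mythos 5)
Your final routing step fails for a basic degree reason. A subdivision of a wall has maximum degree $3$: branch vertices have degree $3$ and subdivision vertices degree $2$. For any vertex $b_i$, the number of pairwise internally disjoint paths ending at $b_i$ is at most $\deg(b_i)$, since each such path must leave $b_i$ along a distinct incident edge (two paths sharing the first edge at $b_i$ share its other end as an internal vertex). So within the wall skeleton there are at most $3$ pairwise internally disjoint paths between any two vertices, and the plan to ``route $k$ pairwise internally disjoint paths between $b_i$ and $b_j$ through that corridor, using the brick structure of the wall'' cannot produce a strong $k$-block for any $k\ge 4$. For the strong $k$-block you need every vertex of $B$ to have degree at least $k$ in $G$, which forces the routing to go through chords --- but chords are exactly what the Ramsey stabilization was trying to sanitize away, and in the ``sparse chords'' branch of your trichotomy there is no guarantee that any chords are incident to the chosen $b_i$ at all.

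Beyond that, the trichotomy itself is asserted rather than established: even after stabilization, a chord pattern could fail to yield a $(\le d)$-subdivision of $K_m$ and fail to assemble into a line graph of a subdivided wall, while still not being ``routable'' in any useful sense. Showing that $t$-cleanness together with the absence of a strong $k$-block is incompatible with every residual chord pattern on a wall is precisely the hard content of the theorem, and your sketch does not supply an argument for it. (There is also a technical issue with the Ramsey step: the subdivision paths of $W$ can be arbitrarily long, so chord endpoints range over an unbounded set of positions and there is no fixed finite palette of ``types'' to which Theorems~\ref{multiramsey} and~\ref{productramsey} could be applied without substantial preprocessing --- but this is a repairable nuisance compared with the degree obstruction above.)
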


Given a graph $G$ and $d\in \poi$, a \textit{$d$-stable set} in $G$ is a set $S\subseteq V(G)$ such that for every two distinct vertices $u,v\in S$, there is no path of length at most $d$ in $G$ from $u$ to $v$. It is proved in \cite{twvii} that:

\begin{theorem}[Abrishami, Alecu, Chudnovsky, Hajebi and Spirkl \cite{twvii}]\label{distance}
    For all   $d,k\in \poi$ and $m\geq 2$, there is a constant $\kappa=\kappa(d,k,m)\in \poi$ with the following property. Let $G$ be a graph and $B$ be a strong $\kappa$-block in $G$. Assume that $G$ does not contain a $(\leq d)$-subdivision of $K_m$ as a subgraph. Then there exists $A\subseteq G$ with $B'\subseteq B\setminus A$ such that $B'$ is both a strong $k$-block and a $d$-stable set in $G\setminus A$.
\end{theorem}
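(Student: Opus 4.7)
The plan is to choose $\kappa$ large enough as a function of $d$, $k$, $m$ via Ramsey-type bounds, and to reduce the statement to finding $B'\subseteq B$ of size $k$ that is $d$-stable in $G$ itself. Indeed, if such a $B'$ is found, then taking $A=\emptyset$ already suffices: $B'$ is automatically $d$-stable in $G\setminus A=G$, and since $B$ is a strong $\kappa$-block with $\kappa\geq k$, the subfamily $\{\mca{P}_{\{x,y\}}:\{x,y\}\subseteq B'\}$ witnesses that $B'$ is a strong $k$-block in $G$.

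To find such a $d$-stable $B'$, I would introduce the auxiliary graph $H$ on vertex set $B$ with $xy\in E(H)$ iff $d_G(x,y)\leq d$, and apply Theorem~\ref{multiramsey} (with $q=2$ and two colors) to the two-coloring of the complete graph on $B$ given by edges and non-edges of $H$. Provided $\kappa\geq\rho(\max\{k,N\},2,2)$ for a sufficiently large $N=N(d,m)$ to be specified, this yields either (i)~an independent set in $H$ of size $k$ -- which is the desired $B'$ -- or (ii)~a clique $Y\subseteq B$ of size $N$ in $H$, equivalently $N$ vertices of $B$ pairwise at distance at most $d$ in $G$. Case~(i) completes the proof, so the task reduces to showing that for $N$ sufficiently large in terms of $d$ and $m$, case~(ii) would force a $(\leq d)$-subdivision of $K_m$ as a subgraph of $G$, contradicting the hypothesis.

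To derive a $(\leq d)$-subdivision of $K_m$ from case~(ii), I would fix, for each pair $\{x,y\}\subseteq Y$, a shortest $xy$-path $Q_{\{x,y\}}$ in $G$ (so $|Q_{\{x,y\}}^*|\leq d-1$), and iteratively extract $m$ branch vertices from $Y$ together with pairwise internally disjoint witnesses among them. At step $i$, given $y_1,\dots,y_i\in Y$ and internally disjoint $Q_{\{y_j,y_l\}}$ for $j<l\leq i$, the ``forbidden'' set of previously used internal vertices has size at most $\binom{i}{2}(d-1)$; I would apply Theorem~\ref{productramsey} to the candidate set $Y\setminus\{y_1,\dots,y_i\}$, coloring each candidate $z$ by the intersection pattern of $(Q^*_{\{y_1,z\}},\dots,Q^*_{\{y_i,z\}})$ with the forbidden set. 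The alphabet of patterns is finite and bounded in terms of $d$ and $i$; a sufficiently large monochromatic class ought to be concentrated on the ``empty'' pattern, yielding the valid extension $y_{i+1}$, provided bottlenecks in which many witnesses concentrate through a small shared set can be ruled out. The strong block structure of $B$ should help here: the collections $\mca{P}_{\{z,y_j\}}$ supply $\kappa$ pairwise internally disjoint $zy_j$-paths, and distinct collections are internally disjoint, saturating the available ``room'' through any single vertex once $\kappa$ is sufficiently large.

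The main obstacle will be precisely this bottleneck analysis: the short witnesses $Q_{\{x,y\}}$ need not belong to the strong block collections $\mca{P}_{\{x,y\}}$, and they may therefore share internal vertices in highly tangled ways that the strong block structure controls only indirectly. Reconciling the short-path witnesses driving the Ramsey coloring with the strong block collections preventing concentration is the delicate technical step -- likely requiring us to re-choose the witnesses at each iteration through a diluted subset of the available internally disjoint paths, or to run a sunflower-type argument on the family $\{Q^*_{\{x,y\}}:\{x,y\}\subseteq Y\}$ to trade a large $Y$ for a core-plus-petals structure with the $m$ branch vertices chosen from the petal side. Once this is handled, composing the Ramsey constants across $m$ iterations yields the required $\kappa=\kappa(d,k,m)$.
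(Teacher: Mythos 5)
The reduction you propose at the outset --- that it suffices to find $B'\subseteq B$ of size $k$ that is $d$-stable in $G$ itself, so that one may always take $A=\emptyset$ --- is incorrect, and the error is not merely a ``delicate technical step'' to be handled later. The deletion set $A$ is essential to the statement, and your Ramsey dichotomy cannot be closed the way you describe: a set $Y\subseteq B$ of $N$ vertices pairwise at distance at most $d$ does \emph{not} force a $(\leq d)$-subdivision of $K_m$, no matter how large $N$ is.

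Here is a concrete obstruction. Let $d=2$, $m=3$. Take $B=\{b_1,\dots,b_n\}$, a vertex $c$ adjacent to every $b_i$, and for each pair $\{b_i,b_j\}$ attach $\kappa$ internally disjoint paths of length $10$ between $b_i$ and $b_j$, these paths pairwise internally disjoint across all pairs and disjoint from $c$. Then $B$ is a strong $\kappa$-block (witnessed by the long paths), every two vertices of $B$ are at distance $2$ via $c$, so no $2$-subset of $B$ is $2$-stable in $G$; yet $G$ has girth at least $12$, hence contains no $(\leq 2)$-subdivision of $K_3$ as a subgraph. So case~(ii) of your dichotomy can hold with $N$ as large as you like while the hypothesis that $G$ excludes the short subdivision also holds, and there is no contradiction to be extracted. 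In this example the theorem is nonetheless true: one takes $A=\{c\}$, after which all of $B$ is $2$-stable in $G\setminus A$ and the strong block structure is untouched.

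What this shows is that the ``bottleneck'' you flag at the end is not an artifact of your choice of short-path witnesses --- it is precisely the phenomenon the deletion set $A$ exists to handle. When many pairs in $Y$ are short precisely because their short paths funnel through a small set of shared vertices, the correct move is to place those shared vertices into $A$ (while verifying that doing so neither destroys too much of the strong block structure nor leaves new short connections behind), not to seek internally disjoint short witnesses among them, which need not exist. Your sketch never constructs a nonempty $A$, never verifies that $B'$ remains a strong $k$-block after a nontrivial deletion, and is built around an implication (pairwise-close $\Rightarrow$ short subdivision) that is false. As written, the proposal does not prove the theorem and cannot be repaired without introducing the deletion step as a central ingredient rather than a footnote.
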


For   $c,o\in \poi$, a graph $G$ is said to be \textit{$(c,o)$-perforated} if there are no $c$ pairwise disjoint cycles in $G$, each of length at least $o+2$. It follows that $G$ is $c$-perforated if and only if $G$ is $(c,1)$-perforated. Also, one may observe that subdivisions of $W_{5co\times 5co}$ and their line graphs are not $(c,o)$-perforated. It follows from Theorem~\ref{noblocksmalltw_wall} that:

\begin{corollary}\label{noblocksmalltw_polycyclefree}
For all   $c,k,o,t\in \poi$, there is a constant $\xi=\xi(c,k,o,t)$ such that every $(c,o)$-perforated graph of treewidth more than $\xi$ contains either $K_t$, or $K_{t,t}$, or a strong $k$-block.
  \end{corollary}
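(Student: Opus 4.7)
The plan is to derive the corollary directly from Theorem~\ref{noblocksmalltw_wall} by choosing the cleanness parameter large enough to absorb the complete and complete bipartite obstructions while forcing any wall-type obstruction to violate $(c,o)$-perforation. Concretely, I would set $t' = \max(t, 5co)$ and define $\xi(c,k,o,t) = w(k, t')$, where $w$ is the function supplied by Theorem~\ref{noblocksmalltw_wall}.

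Now let $G$ be a $(c,o)$-perforated graph with $\tw(G) > \xi(c,k,o,t)$ that contains neither $K_t$ nor $K_{t,t}$, and suppose for a contradiction that $G$ has no strong $k$-block. Then by Theorem~\ref{noblocksmalltw_wall}, $G$ fails to be $t'$-clean, so it contains some $t'$-basic obstruction as an induced subgraph. The $K_{t'}$ and $K_{t',t'}$ cases are immediately ruled out, since $t' \geq t$ and $G$ is assumed to contain neither $K_t$ nor $K_{t,t}$. So the obstruction must be a subdivision of $W_{t' \times t'}$ or the line graph of such a subdivision.

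For this remaining case I would invoke the observation the paper records just before the corollary: with $t' \geq 5co$, one can pick $c$ pairwise far-apart hexagonal faces in the wall, each of which (after subdivision, or inside the line graph) yields an induced cycle of length at least $o+2$, and these $c$ cycles are pairwise vertex-disjoint with no edges between them. Hence the subdivided wall (or its line graph) is not $(c,o)$-perforated, and since $(c,o)$-perforation is clearly hereditary, neither is $G$, a contradiction.

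No real obstacle arises; the argument is essentially a parameter-chasing reduction. The only mild bookkeeping is the face-packing inside $W_{5co \times 5co}$ (and the corresponding ``triangle-plus-long-path'' cycles inside its line graph), but the excerpt has already flagged both of these as routine by asserting that subdivisions of $W_{5co \times 5co}$ and their line graphs fail to be $(c,o)$-perforated.
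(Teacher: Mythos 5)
Your proposal matches the paper's approach exactly: the corollary is derived from Theorem~\ref{noblocksmalltw_wall} by setting the cleanness parameter to $\max(t,5co)$, and the only non-routine ingredient is the paper's recorded observation that subdivisions of $W_{5co\times 5co}$ and their line graphs are not $(c,o)$-perforated. Your parameter-chasing reduction is the intended argument.

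One small caution on the side remark you make about the observation itself: ``far-apart hexagonal faces'' will not suffice once $o>4$, because a hexagonal face of the wall has length exactly $6$ and the subdivision of $W_{5co\times 5co}$ could be the trivial one (i.e., the wall itself), so you cannot rely on subdivision to stretch a face to length $o+2$. The cycles one actually wants are boundaries of spaced-apart subwalls of side roughly $o$, whose perimeters have length on the order of $o$ and hence exceed $o+2$; the factor $5co$ is there precisely so that $c$ such subwalls can be placed pairwise far apart. Since the paper itself treats the observation as routine and you defer to that, this detail does not affect the validity of your reduction, but the hexagonal-face formulation you gave is not literally correct as stated.
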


  A vertex $v$ in a graph $G$ is said to be a \textit{branch vertex} if $v$ has degree more than two. By a {\em caterpillar} we mean a tree $T$ with maximum degree three such that there is a path $P$ in $T$ containing all branch vertices of $T$ (this is not standard for two reasons: a caterpillar is usually allowed to be of arbitrary maximum degree, and the path $P$ from the definition often contains all vertices of degree more than one). By a \textit{subdivided star} we mean a graph isomorphic to a subdivision of the complete bipartite graph $K_{1,\delta}$ for some $\delta\geq 3$. In other words, a subdivided star is a tree with exactly one branch vertex, which we call its \textit{root}. For a graph $H$, a vertex $v$ of $H$ is said to be \textit{simplicial} if $N_H(v)$ is a clique in $H$. We denote by $\mca{Z}(H)$ the set of all simplicial vertices of $H$. Note that for every tree $T$, $\mca{Z}(T)$ is the set of all leaves of $T$. Also, if $H$ is the line graph of a tree $T$, then $\mca{Z}(H)$ is the set of all vertices in $H$ corresponding to the edges in $T$ which are incident with the leaves of $T$.
  
  The following is proved in \cite{twvii}; see Figure~\ref{fig:connectifier} (see also \cite{thesis} for a self-contained proof with the explicit bound).
  
\begin{theorem}[Abrishami, Alecu, Chudnovsky, Hajebi and Spirkl \cite{twvii}]\label{connectifier}

 Let $h\in \poi$ and let $G$ be a graph. Let $S \subseteq V(G)$  with $|S|\geq h^{8h^4}$ 
  such that $S$ is contained in a connected component of $G$. Then there is an induced subgraph $H$ of $G$ with $|V(H)\cap S|=h$ for which one of the following holds.

\begin{enumerate}[\rm (a)]
\item\label{thm:minimalconnectedgeneral_a} $H$ is a path with ends in $S$.

\item\label{thm:minimalconnectedgeneral_b} $H$ is a subdivided star with root $r$ such that $\mca{Z}(H)\subseteq V(H)\cap S\subseteq \mca{Z}(H)\cup \{r\}$.

\item\label{thm:minimalconnectedgeneral_c} $H$ is the line graph of a subdivided star with $V(H)\cap S=\mca{Z}(H)$.

\item\label{thm:minimalconnectedgeneral_d} $H$ is either a caterpillar or the line graph of a caterpillar, and $V(H)\cap S=\mca{Z}(H)$.
    \end{enumerate}
  \end{theorem}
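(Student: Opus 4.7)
The plan is to proceed by induction on $h$, the engine of the induction being a minimal-connector analysis combined with iterated Ramsey extractions. For $h \leq 2$ the statement is trivial: a single vertex of $S$, or an induced path between two elements of $S$ (exists since $S$ lies in one connected component), falls under case (a). For the inductive step, I would fix a subset $S' \subseteq S$ of a carefully chosen intermediate size (something like $h^{4(h-1)^4}$, so as to absorb all later Ramsey losses) and take $F$ to be a vertex-minimal induced connected subgraph of $G$ with $S' \subseteq V(F)$.

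By minimality, every $v \in V(F) \setminus S'$ is a cut vertex of $F$ whose removal separates $S'$ non-trivially; in particular, each component of $F \setminus v$ meets $S'$. This forces $F$ to have a very restricted block-cut-tree structure: each leaf block is incident to an element of $S'$, and every cut vertex is ``witnessed'' by $S'$. I would then Ramsey-extract, via Theorem~\ref{multiramsey} applied to pairs from $S'$, a subset $S'' \subseteq S'$ on which the pairwise block-cut-tree routes share a common combinatorial type (same number of branch vertices traversed, same block sizes, same incidence pattern with $S'$). A case analysis shows that the possible common types fall exactly into the four outcomes of the theorem: routes that share a single branching cut vertex yield a subdivided star with the required simpliciality condition, giving case \ref{thm:minimalconnectedgeneral_b}; routes that run along a common spine yield a caterpillar or, if the spine lives inside a $2$-connected block, its line graph, giving case \ref{thm:minimalconnectedgeneral_d}; routes confined to a single block yield either a path, giving case \ref{thm:minimalconnectedgeneral_a}, or the line graph of a subdivided star, giving case \ref{thm:minimalconnectedgeneral_c}.

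The hard part will be handling $2$-connected blocks. A priori such a block can have complicated induced-subgraph structure, and to extract case \ref{thm:minimalconnectedgeneral_a}, case \ref{thm:minimalconnectedgeneral_c}, or the line-graph variant of case \ref{thm:minimalconnectedgeneral_d} one must find, inside a $2$-connected induced subgraph with many marked vertices, either a long induced path through many marks or a sufficiently symmetric line-of-tree configuration. I would isolate this as a dedicated sublemma: in a $2$-connected graph with a distinguished vertex set $S^\star$, one can always find an induced path or a line graph of a subdivided star whose distinguished endpoints form a prescribed fraction of $S^\star$. The final tower-type bound $h^{8h^4}$ should then arise from compounding the (polynomial-in-$h$) Ramsey cost of this sublemma with the outer extraction along the block-cut tree, and with the additional exponent needed to pass from minimal $F$ down to a substructure in which the inductive hypothesis can be re-applied cleanly.
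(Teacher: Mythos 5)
The paper states Theorem~\ref{connectifier} as a citation to \cite{twvii} (with a self-contained proof referenced in \cite{thesis}); no proof is given in this paper, so I am judging the proposal on its own terms. Your overall skeleton --- pick a subset $S'\subseteq S$, take a vertex-minimal induced connected subgraph $F$ of $G$ containing $S'$, observe that every vertex of $V(F)\setminus S'$ is a cutvertex of $F$ whose removal separates $S'$ nontrivially, then Ramsey over the block--cut structure --- is the natural and standard way to approach a connectifier theorem, and a tower-type bound is indeed what iterated Ramsey produces. The problem is in the sublemma you isolate as ``the hard part.''

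You claim that in a $2$-connected graph $B$ with distinguished set $S^\star$ one can always find an induced path or a line graph of a subdivided star meeting a prescribed fraction of $S^\star$. This is false: take $B=K_{m,m}$ with $S^\star=V(B)$. Every induced path in $K_{m,m}$ has at most three vertices, and the line graph of any subdivided star with at least three branches contains a triangle (the edges at the root form a clique in the line graph), which the bipartite $K_{m,m}$ does not contain. So neither of your two permitted outcomes meets more than a bounded number of $S^\star$-vertices, while $|S^\star|=2m$ grows. The theorem is of course true for $K_{m,m}$: the right $H$ is a star $K_{1,h-1}$ with root on one side and $h-1$ leaves on the other, which is outcome~\ref{thm:minimalconnectedgeneral_b}. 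This exposes a misassignment in your case analysis: outcome~\ref{thm:minimalconnectedgeneral_b} does \emph{not} only arise from ``routes that share a branching cut vertex.'' A subdivided star must also be extractable from a high-degree vertex \emph{inside} a $2$-connected block, including the degenerate situation where $F$ has no cutvertices at all (so that $V(F)=S'$ and $F=G[S']$). A correct $2$-connected sublemma must therefore include the subdivided-star outcome (and, for similar reasons, the caterpillar outcome), at which point it is essentially the full theorem again restricted to a $2$-connected host, and the proposed reduction has not actually localized the difficulty. The Ramsey pass over ``pairwise block--cut-tree routes'' is also left too vague to tell whether it yields the clean trichotomy (one dominant block, versus a long block path, versus a high-degree branch node) that your case analysis tacitly assumes; this would need to be pinned down before the block-level sublemma could even be stated correctly.
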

  \begin{figure}[t!]
      \centering
      \includegraphics[scale=0.8]{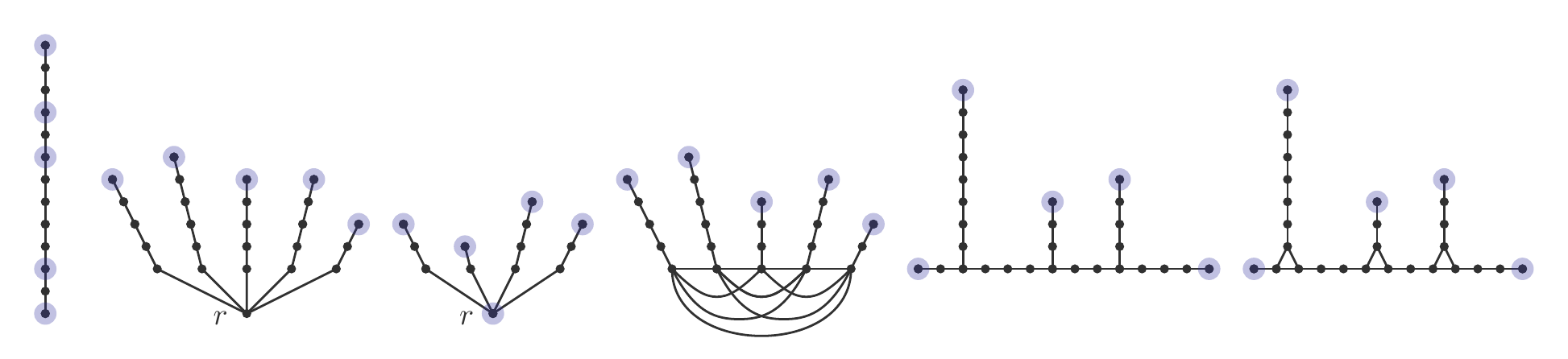}
      \caption{Outcomes of Theorem~\ref{connectifier} when $h=5$. From left to right: the first for \ref{thm:minimalconnectedgeneral_a}, the second for  \ref{thm:minimalconnectedgeneral_b} when $r\notin S$, the third for  \ref{thm:minimalconnectedgeneral_b} when $r\in S$, the fourth for \ref{thm:minimalconnectedgeneral_c}, and the fifth and the sixth for \ref{thm:minimalconnectedgeneral_d}.}
      \label{fig:connectifier}
  \end{figure}

\section{Asterisms}\label{sec:asterism}

In this section, we state our main result in formal terms, beginning with a definition which is of critical importance in the remainder of this paper. Let $G$ be a graph and let $s \in \poi \cup \{0\}$. By an \textit{$s$-asterism in $G$} we mean a pair $\mf{a}=(S_{\mf{a}}, L_{\mf{a}})$ where $S_{\mf{a}}$ is a stable set in $G$ with $|S_{\mf{a}}|=s$ and $L_{\mf{a}}$ is a path in $G\setminus S_{\mf{a}}$, such that every vertex in $S_{\mf{a}}$ has a neighbor in $L_{\mf{a}}^*$ and $S_{\mf{a}}$ is anticomplete to $\partial L_{\mf{a}}$.  An \textit{ordered $s$-asterism in $G$} consists of an $s$-asterism $\mf{a}$ along with a bijection $\pi_{\mf{a}}:[s]\rightarrow S_{\mf{a}}$. There are several convenient notations and definitions associated with asterisms, and we prefer to collect them all below. See also Figure~\ref{fig:asterism}.

Fix an (ordered) $s$-asterism $\mf{a}$ in $G$. We denote by $V(\mf{a})$ the vertex set $S_{\mf{a}}\cup V(L_{\mf{a}})$. By an \textit{$\mf{a}$-route} we mean a path in $G$ with ends in $S_{\mf{a}}$ and interior contained in $L_{\mf{a}}$. An $\mf{a}$-route $R$ is \textit{minimal} if there is no $\mf{a}$-route $Q$ with $Q^*$ properly contained in  $R^*$. We say $\mf{a}$ is \textit{ample} if no two vertices in $S_{\mf{a}}$ have a common neighbor in $L_{\mf{a}}$. More generally, given $d \in \poi \cup \{0\}$, we say $\mf{a}$ is \textit{$d$-ample} if every $\mf{a}$-route has length at least $d+2$. For instance, every asterism in $G$ is $0$-ample, and $\mf{a}$ is $1$-ample if and only if $\mf{a}$ is ample.
 
 By an \textit{$\mf{a}$-piece} we mean a path $P$ in $L_{\mf{a}}$ of non-zero length such that every end of $P$ that is not an end of $L_{\mf{a}}$ has a neighbor in $S_{\mf{a}}$, and $P^*$ is anticomplete to $S_{\mf{a}}$. An $\mf{a}$-piece $P$ is \textit{internal} if $P$ is contained in $L_{\mf{a}}^*$; otherwise $P$ is \textit{external} (so the entire path $L_{\mf{a}}$ is the only external $\mf{a}$-piece if $S_{\mf{a}}=\emptyset$, there are exactly two external $\mf{a}$-pieces if $S_{\mf{a}}\neq\emptyset$). An $\mf{a}$-piece $P$ is said to be \textit{open} if the ends of $P$ have no common neighbor in $S_{\mf{a}}$, and \textit{closed} if the ends of $P$ have a common neighbor in $S_{\mf{a}}$. It follows that every external $\mf{a}$-piece is open. In fact, one may observe that an $\mf{a}$-piece $P$ is open if and only if either $P$ is external or $P$ is the interior of a minimal $\mf{a}$-route.
 \begin{figure}
\centering \includegraphics{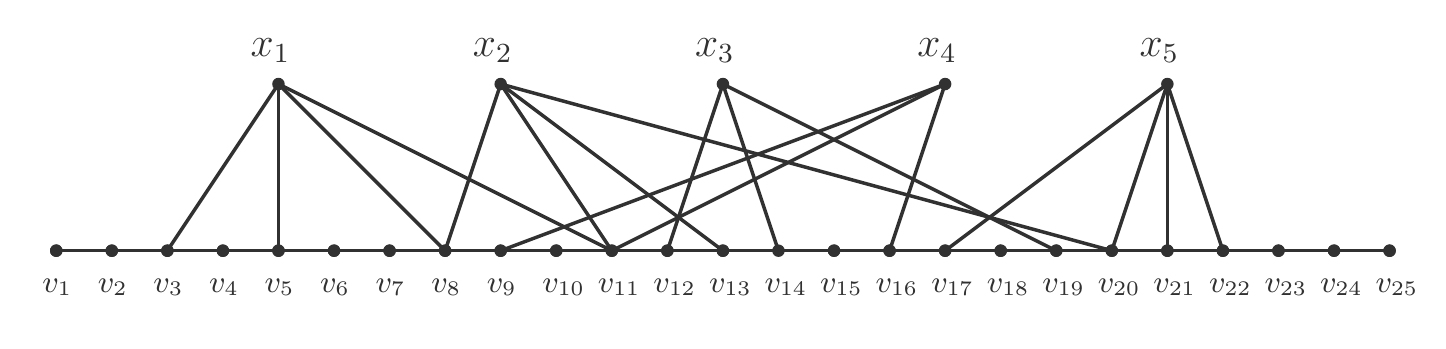}
 \caption{A $5$-asterism $\mf{a}$ with $S_{\mf{a}}=\{x_1,\ldots, x_5\}$ and $L_{\mf{a}}=v_1\dd \cdots \dd v_{25}$. Note, for instance, that  $x_2\dd v_{13}\dd v_{14}\dd v_{15}  \dd v_{16}\dd x_4$ is an $\mf{a}$-route that is not minimal, and $x_2\dd v_{13}\dd v_{14}\dd x_3$ is an $\mf{a}$-route that is minimal. Also, there are exactly 15 $\mf{a}$-pieces; 13 of which internal and $v_1\dd v_{2}\dd v_{3}$ and $v_{22}\dd v_{23}\dd v_{24}\dd v_{25}$ are the two external $\mf{a}$-pieces. For instance, $v_{17}\dd v_{18}\dd v_{19}$ is an internal $\mf{a}$-piece which is open, and $v_{5}\dd v_{6}\dd v_{7}\dd v_{8}$ is a closed $\mf{a}$-piece (which is necessarily internal).}
 \label{fig:asterism}
 \end{figure}

For every subset $X$ of $S_{\mf{a}}$, we denote by $\mf{a}|X$ the (ordered) $|X|$-asterism $(X,L_{\mf{a}})$ such that, in case $\mf{a}$ is ordered, for all distinct $x,x'\in X$, we have $\pi_{\mf{a}|X}(x)>\pi_{\mf{a}|X}(x')$ if and only if $\pi_{\mf{a}}(x)>\pi_{\mf{a}}(x')$.

Assume that $\mf{a}$ is ordered. For every integer $i$, we write $\mf{a}^{i}=\mf{a}|\pi_{\mf{a}}([i])$. We say $\mf{a}$ is \textit{interrupted} if for every $i\in [s]$:
\medskip

\begin{enumerate}[(INT), leftmargin=19mm]
\item\label{INT} $\pi_{\mf{a}}(i)$ has at least one neighbor in each open $\mf{a}^{i-1}$-piece. 
\end{enumerate}
\medskip

Also, we say $\mf{a}$ is \textit{invaded} if for every $i\in [s]$:
\medskip

\begin{enumerate}[(\rm{INV}), leftmargin=19mm]
    \item\label{INV} $\pi_{\mf{a}}(i)$ has at least one neighbor in each closed $\mf{a}^{i-1}$-piece.
\end{enumerate}
\medskip

Let us now re-define full occultations in terms of asterisms. Given a graph $G$ and $s \in \poi \cup \{0\}$, a \textit{full $s$-occultation in $G$} is an ample ordered $s$-asterism $\mf{o}$ in $G$ which is both interrupted and invaded, that is, for every $i\in [s]$, $\pi_{\mf{o}}(i)$ has at least one neighbor in the interior of every $\mf{o}^{i-1}$-piece. It is straightforward to check that this definition is equivalent to the one mentioned in Section~\ref{sec:intro}, modulo the convenient nuance of viewing full occultations as induced subgraphs of a fixed graph $G$ with their ``$(S,L)$ partition'' given, rather than independently defined graphs.

As promised, our main result is an extension of Theorem~\ref{mainthm} to a characterization of obstructions to bounded treewidth in $(c,o)$-perforated graphs for every $o\in \poi$. It turns out that Theorem~\ref{mainthm} remains true for $(c,o)$-perforated graphs at the cost of ``relaxing the invadedness of full occultations.'' To be more precise, given a graph $G$,  $s \in \poi \cup \{0\}$ and $o\in \poi$, we say an ordered $s$-asterism $\mf{a}$ in $G$ is \textit{$o$-invaded} if for every $i\in [s]$:
\medskip

\begin{enumerate}[(OI), leftmargin=17mm, rightmargin=7mm]
    \item\label{OI} $\pi_{\mf{a}}(i)$ has at least one neighbor in each closed $\mf{a}^{i-1}$-piece of length at least $o$.
\end{enumerate}
\medskip

So $\mf{a}$ is invaded if and only if $\mf{a}$ is $1$-invaded. By a \textit{full $(s,o)$-occultation in $G$}, we mean an ample ordered $s$-asterism $\mf{o}$ in $G$ which is both interrupted and $o$-invaded. In particular, $\mf{o}$ is a full $(s,1)$-occultation in $G$ if and only if $\mf{o}$ is a full $s$-occultation in $G$. There is also an analogue of Theorem~\ref{thm:deathstarproperties} for full $(s,o)$-occultations:

\begin{theorem}\label{thm:fulldeathstarproperties}
   Let $s \in \poi \cup \{0\}$, $o\in \poi$ and let $G$ be a graph. Then the following hold.
    \begin{enumerate}[\rm (a)]
       \item\label{fulldeathstarproperties_a} Let $g \in \poi \cup \{0\}$ be and let $\mf{o}$ be a full $(g+s,o)$-occultation in $G$. Then $\mf{o}^s$ is a full $(s,o)$-occultation in $G$ and $G[V(\mf{o}^s)]$ has girth more than $g+2$. 
        \item\label{fulldeathstarproperties_b} Let $\mf{o}$ be a full  $(s,o)$-occultation in $G$. Then $G[V(\mf{o})]$ is $(2,o)$-perforated and has treewidth at least $s-1$.
    \end{enumerate}
\end{theorem}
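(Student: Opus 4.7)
The plan for part~\ref{fulldeathstarproperties_a} begins with a direct definition chase showing that $\mf{o}^s$ is a full $(s,o)$-occultation: for every $i\in [s]$ we have $(\mf{o}^s)^i=\mf{o}^i$, so the open and closed $(\mf{o}^s)^{i-1}$-pieces coincide with those of $\mf{o}^{i-1}$, and ampleness, interruptedness and $o$-invadedness all descend from $\mf{o}$ to $\mf{o}^s$ at each level in $[s]$.

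For the girth bound, I would proceed by induction on $g$. The base case $g=0$ is immediate since $G$ is simple. For the inductive step, the first assertion applied to $\mf{o}^{g+s-1}$ lets us regard it as a full $((g-1)+s,o)$-occultation, so by the inductive hypothesis $G[V(\mf{o}^s)]$ has girth more than $g+1$. Suppose for contradiction that $G[V(\mf{o}^s)]$ contains a cycle $H$ of length exactly $g+2$. Because $S_{\mf{o}^s}$ is stable and $L_{\mf{o}}$ induces a path, $H$ cyclically alternates between vertices of $V(H)\cap S_{\mf{o}^s}$ and its \emph{arcs}---the maximal subpaths of $H$ lying in $L_{\mf{o}}$---whose interiors are anticomplete to $V(H)\cap S_{\mf{o}^s}$. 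I would then locate the arcs of $H$ inside the $\mf{o}^{g+s-1}$-pieces of $L_{\mf{o}}$: the interruptedness and $o$-invadedness of $\mf{o}$ at level $g+s$ place a neighbor of $\pi_{\mf{o}}(g+s)$ inside a well-chosen arc of $H$, and combined with ampleness this neighbor produces a rerouting of $H$ through $L_{\mf{o}}$ contradicting either the length of $H$ or the fact that $H$ is induced.

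For part~\ref{fulldeathstarproperties_b}, the $(2,o)$-perforation I would prove by induction on $s$, with base $s=0$ immediate since then $G[V(\mf{o})]=L_{\mf{o}}$ is a path. For the inductive step, two vertex-disjoint cycles $H_1,H_2$ of length at least $o+2$ in $G[V(\mf{o})]$ would each have to use a vertex of $S_{\mf{o}}$, and the top-indexed vertex $\pi_{\mf{o}}(s)$ lies in at most one, say $H_1$. Then $H_2\subseteq V(\mf{o}^{s-1})$; if $H_1$ also avoids $\pi_{\mf{o}}(s)$, the inductive hypothesis applied to $\mf{o}^{s-1}$ is violated, and otherwise some arc of $H_1$ must be (or contain) a closed $\mf{o}^{s-1}$-piece of length at least $o$, on which $o$-invadedness combined with ampleness produces the contradiction using $H_2$. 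For the treewidth lower bound, I would exhibit an induced minor of $G[V(\mf{o})]$ of treewidth at least $s-1$, essentially mimicking the argument for Theorem~\ref{thm:deathstarproperties} in \cite{deathstar}: the successive refinements guaranteed by interruptedness (and $o$-invadedness on long enough closed pieces) yield branch sets connecting the $\pi_{\mf{o}}(i)$'s in a recursive pattern whose quotient has treewidth at least $s-1$, with the ``at least one'' in \ref{INT} and the length-$o$ threshold in \ref{OI} affecting only the sizes of the connecting subpaths, not the existence of the minor model.

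The main obstacle I anticipate is the girth argument in part~\ref{fulldeathstarproperties_a} when $o\geq 2$: short closed pieces of length less than $o$ are not invaded by the next-level vertex, so the refinement of the piece structure of $\mf{o}^{g+s-1}$ by $\pi_{\mf{o}}(g+s)$ is incomplete, and the naive inductive step must be upgraded to a careful case analysis distinguishing arcs of $H$ lying inside short closed pieces (where ampleness, rather than invadedness, is the active constraint forcing the contradiction) from those meeting open pieces or long enough closed pieces.
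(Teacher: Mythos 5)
Part of your plan is fine: the definition chase for the first clause of~\ref{fulldeathstarproperties_a} is exactly what the paper has in mind (it leaves~\ref{fulldeathstarproperties_a} to the reader), and Case~1 of your perforation argument is correct. But two of your steps do not go through as written. In part~\ref{fulldeathstarproperties_b}, your Case~2 is misdirected: you put $\pi_{\mf{o}}(s)$ in $H_1$ and then analyse arcs of $H_1$, but \ref{OI} applied to a closed piece lying inside $H_1$ only gives $\pi_{\mf{o}}(s)$ a neighbour in its \emph{own} cycle, which is no contradiction, and your claim that some arc of $H_1$ must be or contain a closed $\mf{o}^{s-1}$-piece of length at least $o$ is unsupported. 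The contradiction must come from the cycle \emph{avoiding} the top vertex: as in the paper (which needs no induction at all), take $i$ maximum with $\pi_{\mf{o}}(i)\in H_1\cup H_2$, say $\pi_{\mf{o}}(i)\in H_2$; every component $P$ of $H_1\cap L_{\mf{o}}$ has both ends attached to $\pi_{\mf{o}}([i-1])$, so either $P$ contains an open $\mf{o}^{i-1}$-piece (then \ref{INT} puts a neighbour of $\pi_{\mf{o}}(i)$ inside $H_1$), or $P$ is a closed $\mf{o}^{i-1}$-piece, which has length at least $o$ because $H_1$ has length at least $o+2$ (then \ref{OI} does the same); either way $H_1,H_2$ are not anticomplete. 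For the treewidth bound the paper is also much lighter than your minor-model plan: $G[V(\mf{o})]$ contains a subdivision of an $s$-occultation as a \emph{subgraph} (choose, level by level, exactly one neighbour of $\pi_{\mf{o}}(i)$ in each gap between previously chosen vertices; using ampleness one checks every such gap contains an open $\mf{o}^{i-1}$-piece, so \ref{INT} supplies the neighbour), and then Theorem~\ref{thm:deathstarproperties} is quoted as a black box.

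The obstacle you flag for the girth bound in~\ref{fulldeathstarproperties_a} when $o\geq 2$ is real, but ampleness cannot repair it: a closed $\mf{o}^{i-1}$-piece of length less than $o$ is untouched by \ref{OI} at every later level, and ampleness actually \emph{prevents} later vertices from attaching to it, so that piece together with the common neighbour of its ends is a short induced cycle of $G[V(\mf{o}^s)]$ that survives verbatim. Concretely, take $L_{\mf{o}}$ long, $\pi_{\mf{o}}(1)$ adjacent to exactly two consecutive interior vertices of $L_{\mf{o}}$, and the remaining $\pi_{\mf{o}}(i)$ chosen to satisfy \ref{INT} and \ref{OI} elsewhere; this is a full $(g+s,o)$-occultation for any $o\geq 2$, yet $G[V(\mf{o}^1)]$ contains a triangle. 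So no case analysis will close your inductive step in that regime; the girth bound is provable as stated only for $o=1$ (or $g=0$), and there the clean argument is not an induction on $g$ but a direct count inside one arc: any cycle of $G[V(\mf{o}^s)]$ has an arc in $L_{\mf{o}}$ whose ends attach to $\pi_{\mf{o}}([s])$, and nesting \ref{INT}/\ref{OI} for $\pi_{\mf{o}}(s+1),\dots,\pi_{\mf{o}}(s+g)$ produces $g$ distinct (by ampleness) interior vertices of that arc, so the cycle has length at least $g+3$. Since the paper gives no proof of~\ref{fulldeathstarproperties_a}, there is nothing further there to compare against, but your proposed resolution of the $o\geq2$ case would fail.
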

\begin{proof}
We leave the proof of \ref{fulldeathstarproperties_a} to the reader as it is easy, and will only give a proof \ref{fulldeathstarproperties_b}, which is almost identical to the proof of Theorem~\ref{thm:deathstarproperties} in \cite{deathstar}. The assertion is trivial for $s=0$. Let $s\in \poi$ and let $G'=G[V(\mf{o})]$. Then $G'$ contains a subdivision of an $s$-occultation as a subgraph, and so by Theorem~\ref{thm:deathstarproperties}, $G'$ has treewidth at least $s-1$. Now suppose for a contradiction that there are two disjoint and anticomplete cycles $H_1$ and $H_2$ in $G'$, each of length at least $o+2$. It follows that for each $i\in \{1,2\}$, both $H_i\cap S_{\mf{o}}$ and $H_i\cap L_{\mf{o}}$ are non-empty. Let $i\in [s]$ be maximum with $\pi_{\mf{o}}(i)\in H_1\cup H_2$, and without loss of generality, assume that $\pi_{\mf{o}}(i)\in H_2$. Let $P$ be a connected component of $H_1[V(H_1)\cap V(L_{\mf{o}})]$. Then we may write $\partial P=\{u,v\}$ such that for some choice of $j,k\in [i-1]$, $u$ is adjacent $\pi_{\mf{o}}(j)$ in $G$ and $v$ is adjacent $\pi_{\mf{o}}(k)$ in $G$. It follows that either $P$ contains the interior of minimal $\mf{o}^{i-1}$-route, that is, $P$ contains an open $\mf{o}^{i-1}$-piece $P'$, or $j=k$ and $P$ is a closed $\mf{o}^{i-1}$-piece. In the former case, since $\mf{o}$ is interrupted, by \ref{INT}, $\pi_{\mf{o}}(i)\in V(H_2)$ has a neighbor in $P'\subseteq P\subseteq H_1$. But this violates the assumption that $H_1$ and $H_2$ are anticomplete in $G$. Also, in the latter case, we have $H_1=\pi_{\mf{o}}(j)\dd u\dd P\dd v\dd \pi_{\mf{o}}(j)$. Since $H_1$ has length at least $o+2$, it follows $P$ has length at least $o$. More precisely, $P$ is a closed $\mf{o}^{i-1}$-piece of length at least $o$. But then since $\mf{o}$ is $o$-invaded, by \ref{OI}, $\pi_{\mf{o}}(i)\in V(H_2)$ has a neighbor in $P\subseteq H_1$, again a contradiction with $H_1$ and $H_2$ being anticomplete. This completes the proof of Theorem~\ref{thm:fulldeathstarproperties}.
\end{proof}

We can now state the main result of this paper:
\begin{theorem}\label{mainthmgeneral}
    For all   $c,o,t\in \poi$ and $s \in \poi \cup \{0\}$, there is a constant $\tau=\tau(c,o,s,t)\in \poi$ such that for every $(K_t, K_{t,t})$-free $(c,o)$-perforated graph $G$ of treewidth more than $\tau$, there is a full $(s,o)$-occultation in $G$.
\end{theorem}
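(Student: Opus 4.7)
The plan is to combine the three ingredients outlined in Section~\ref{sec:outline} and implement them in the order described there (Sections~\ref{sec:cherry}, \ref{sec:surgery}, \ref{sec:patch}), proceeding by induction on $s$ (outer layer) and $c$ (inner layer). Fix sufficiently large auxiliary constants $k, d, m, M$ depending on $c, o, s, t$, chosen to absorb the Ramsey-type losses below.

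\textbf{Step 1 (Extracting a constellation).} Starting from a $(K_t, K_{t,t})$-free, $(c,o)$-perforated graph $G$ of treewidth greater than $\tau(c,o,s,t)$, I would first apply Corollary~\ref{noblocksmalltw_polycyclefree} to produce a strong $k$-block $B$ in $G$ for a large $k$. Since $G$ is $(K_t,K_{t,t})$-free, Theorem~\ref{dvorak} forbids large $(\leq d)$-subdivisions of $K_m$ in $G$, which lets me invoke Theorem~\ref{distance} to upgrade $B$ to a $d$-stable strong block in some induced subgraph $G - A$. Iteratively peeling off ``local'' pieces of this $d$-stable block and applying Theorem~\ref{connectifier} to each piece, together with Theorems~\ref{multiramsey} and \ref{productramsey}, I can produce pairwise vertex-disjoint induced subgraphs $G_1,\dots,G_M$, in each of which there are two distinguished vertices $x_i, y_i$ joined by $m$ pairwise internally disjoint long paths. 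A further Ramsey sorting (on attachments between different $G_i$'s) yields the desired \emph{constellation}: a large vertex set $V$ on one side and a large family $\mathcal{L}$ of long pairwise internally disjoint paths on the other, with the property that each $L\in\mathcal{L}$ meets the neighbourhood of every $v\in V$. The crux of this step is that any failure to extract the constellation must be converted, using the abundant internally disjoint paths and $d$-stability, into $c$ pairwise disjoint induced cycles of length at least $o+2$ that are pairwise anticomplete, contradicting $(c,o)$-perforation.

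\textbf{Step 2 (Constellation to interrupted asterism).} For each $L\in\mathcal{L}$ define the \emph{transition graph} $\mathsf{T}_L$ on $V$ by making $u,v\in V$ adjacent exactly when there is a path in $G$ from $u$ to $v$ with interior in $L$ whose interior is anticomplete to $V\setminus\{u,v\}$. By the pigeonhole principle, a large subfamily $\mathcal{L}'\subseteq\mathcal{L}$ induces the same graph $\mathsf{T}$ on $V$. If $\mathsf{T}$ contains a matching of size $c$, picking $c$ pairs from the matching and taking the corresponding paths through two \emph{different} elements of $\mathcal{L}'$ produces $c$ pairwise disjoint, pairwise anticomplete induced cycles of length at least $o+2$ in $G$, again contradicting $(c,o)$-perforation. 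Hence $\mathsf{T}$ admits a small vertex cover $X\subseteq V$. Any vertex $x\in X$ now plays the role of $\pi_{\mathfrak{a}}(s)$: every minimal route in $L$ between two vertices of $V\setminus X$ is interrupted by a neighbour of $x$. Remove $x$ from $V$ (and, with some care, trim the constellation to keep its parameters large) and iterate. After $s$ rounds this produces an ample ordered $s$-asterism $\mathfrak{a}$ in $G$ which is interrupted in the sense of \ref{INT}, but which may still violate the $o$-invaded condition \ref{OI}.

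\textbf{Step 3 (Interrupted asterism to full $(s,o)$-occultation).} If $\mathfrak{a}$ fails to be $o$-invaded, pick the smallest $i\in[s]$ and a closed $\mathfrak{a}^{i-1}$-piece $P$ of length at least $o$ in which $\pi_{\mathfrak{a}}(i)$ has no neighbour. Letting $z\in S_{\mathfrak{a}^{i-1}}$ be the common neighbour of $\partial P$, the graph $G[V(P)\cup\{z\}]$ is an induced cycle $H$ of length at least $o+2$. A standard cleaning argument (discarding the portion of $L_{\mathfrak{a}}$ adjacent to $H$ and absorbing the resulting loss into the parameters) shows that a large sub-asterism of $\mathfrak{a}$ is disjoint from and anticomplete to $H$. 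This sub-asterism therefore lives inside a $(c-1,o)$-perforated induced subgraph (any $c-1$ disjoint anticomplete cycles of length at least $o+2$ there, together with $H$, would give $c$ of them in $G$). Apply the inductive hypothesis with parameter $c-1$ to recover a full $(s,o)$-occultation, using Theorem~\ref{thm:fulldeathstarproperties}\ref{fulldeathstarproperties_b} to bound the treewidth that survives the cleaning. The base case $c=1$ is trivial because $1$-perforated graphs are forests.

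\textbf{Main obstacle.} The hard part will be Step 1: running the multi-tier Ramsey extraction cleanly enough that the ``failure to find a constellation'' branch produces $c$ pairwise anticomplete long cycles rather than merely a complicated attachment pattern. Controlling the simultaneous growth of $k, d, m, M$ so that Theorems~\ref{noblocksmalltw_wall}, \ref{distance}, \ref{connectifier}, \ref{multiramsey} and \ref{productramsey} can all be chained without collapse is the central technical burden of the argument.
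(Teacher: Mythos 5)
Your three-step skeleton is the paper's own: large treewidth yields a strong block (Corollary~\ref{noblocksmalltw_polycyclefree}), which via Theorems~\ref{dvorak}, \ref{distance}, \ref{connectifier} and the Ramsey lemmas yields a plain constellation; the transition-graph matching/vertex-cover argument converts this into an interrupted asterism; and the closed pieces are then repaired by an induction on $c$. However, Step 3 as written would fail. After forming the cycle $H$ from a closed piece $P$ and the common neighbour $z$ of its ends, and passing to a sub-asterism anticomplete to $H$, you propose to ``apply the inductive hypothesis with parameter $c-1$'' --- that is, the main theorem for $(c-1,o)$-perforated graphs --- ``using Theorem~\ref{thm:fulldeathstarproperties}\ref{fulldeathstarproperties_b} to bound the treewidth that survives the cleaning.'' This is circular: Theorem~\ref{thm:fulldeathstarproperties}\ref{fulldeathstarproperties_b} lower-bounds the treewidth of a \emph{full occultation}, which is precisely the object you do not yet have, and a trimmed asterism that is merely interrupted comes with no treewidth guarantee, so the main theorem's induction hypothesis cannot be invoked on the cleaned subgraph. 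The paper sidesteps this by inducting not on the main theorem but on a purely combinatorial statement (Theorem~\ref{thm:interrupted_to_occultation}): a $2$-ample interrupted ordered $s^c$-asterism in a $(c,o)$-perforated graph contains a full $(s,o)$-occultation. The contrapositive of that induction hypothesis says that if the cleaned subgraph $G'$ (which carries a $2$-ample interrupted $s^{c-1}$-asterism) has no full $(s,o)$-occultation, then $G'$ contains $c-1$ pairwise disjoint anticomplete cycles of length at least $o+2$, which together with $H$ contradict $(c,o)$-perforation; no treewidth statement is needed at this stage.

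This also exposes a quantitative mismatch at the Step~2/Step~3 interface. Your Step 2 outputs an interrupted $s$-asterism, but the induction on $c$ in Step 3 consumes stable-side vertices at every round (only the vertices anticomplete to $H$ survive, and a route between two surviving special vertices must be carved out to serve as the new path), so you must enter Step 3 with an interrupted asterism whose stable side grows with $c$ (the paper uses $s^c$). That is an easy adjustment inside your Ramsey machinery, but it has to be planned for; as stated, an $s$-asterism cannot survive even one round of the $c$-induction. A minor further slip: the base case claim that ``$1$-perforated graphs are forests'' is wrong for general $o$; the correct observation is that a failure of $o$-invadedness already produces an induced cycle of length at least $o+2$, which a $(1,o)$-perforated graph cannot contain.
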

In view of Theorem~\ref{thm:fulldeathstarproperties}\ref{fulldeathstarproperties_b}, Theorem~\ref{mainthmgeneral} provides a grid-like theorem for the class of $(c,o)$-perforated graphs for all $c,o\in \poi$. Moreover, using Theorem~\ref{thm:fulldeathstarproperties}\ref{fulldeathstarproperties_a}, we may arrange for the full occultation in the outcome of Theorem~\ref{mainthmgeneral} to be of arbitrarily large girth (as opposed to complete and complete bipartite graphs which have small girth):

\begin{corollary}\label{mainthmgeneralcorollary}
    For all   $c,o,t\in \poi$ and $g,s \in \poi \cup \{0\}$, there is a constant $\tau=\tau(c,g,o,s,t)\in \poi$ such that for every $(K_t, K_{t,t})$-free  $(c,o)$-perforated graph $G$, there is a full $(s,o)$-occultation $\mf{o}$ in $G$ such that $G[V(\mf{o})]$ has girth more than $g+2$.
\end{corollary}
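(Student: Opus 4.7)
The plan is to obtain Corollary~\ref{mainthmgeneralcorollary} as an essentially immediate consequence of Theorem~\ref{mainthmgeneral} combined with the girth statement of Theorem~\ref{thm:fulldeathstarproperties}\ref{fulldeathstarproperties_a}. Concretely, I would define
\[
\tau(c,g,o,s,t) \;:=\; \tau_{\ref{mainthmgeneral}}(c,\,o,\,g+s,\,t),
\]
that is, invoke Theorem~\ref{mainthmgeneral} but inflate the asterism parameter from $s$ to $g+s$. Given a $(K_t,K_{t,t})$-free $(c,o)$-perforated graph $G$ of treewidth exceeding this $\tau$ (the implicit treewidth hypothesis of the corollary), Theorem~\ref{mainthmgeneral} then produces a full $(g+s,o)$-occultation $\mf{o}'$ in $G$.

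Next, I would apply Theorem~\ref{thm:fulldeathstarproperties}\ref{fulldeathstarproperties_a} directly to $\mf{o}'$. That statement says exactly that truncating the order of $\mf{o}'$ to its first $s$ elements gives an ordered sub-asterism $(\mf{o}')^s$ which is itself a full $(s,o)$-occultation, and moreover that $G[V((\mf{o}')^s)]$ has girth strictly greater than $g+2$. Setting $\mf{o}:=(\mf{o}')^s$ therefore yields the desired full $(s,o)$-occultation of large girth, completing the argument.

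There is essentially no substantive obstacle; the corollary is a clean parameter-packaging of the main theorem with the girth observation baked into Theorem~\ref{thm:fulldeathstarproperties}\ref{fulldeathstarproperties_a}. The one point that warrants attention is the parameter bookkeeping: one must pad the asterism size by $g$ \emph{before} applying Theorem~\ref{mainthmgeneral}, so that the subsequent truncation still leaves a full $(s,o)$-occultation of the required size. All of the structural constraints on the truncation (ampleness, interruptedness, and $o$-invadedness) are preserved precisely because Theorem~\ref{thm:fulldeathstarproperties}\ref{fulldeathstarproperties_a} is formulated to guarantee this.
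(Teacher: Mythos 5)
Your proposal is correct and is precisely the paper's intended derivation: the paper itself remarks, immediately before stating the corollary, that it follows from Theorem~\ref{mainthmgeneral} applied with parameter $g+s$ together with Theorem~\ref{thm:fulldeathstarproperties}\ref{fulldeathstarproperties_a}. You also correctly identified that the corollary's statement implicitly carries the treewidth hypothesis ``$\tw(G) > \tau$'' (which appears to have been dropped from the printed statement), and your parameter bookkeeping $\tau(c,g,o,s,t) := \tau_{\ref{mainthmgeneral}}(c,o,g+s,t)$ is exactly right.
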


 \section{The cherry on top}\label{sec:cherry}

Recall that every full occultation is both ample and interrupted. Our goal in this section is to show that in perforated graphs, a qualitative converse holds, too:
\begin{theorem}\label{thm:interrupted_to_occultation}
     Let $c,o\in \poi$, $s \in \poi \cup \{0\}$ and let $G$ be a $(c,o)$-perforated graph. Assume that there exists a $2$-ample, interrupted ordered $s^c$-asterism $\mf{a}$ in $G$. Then there exists a full $(s,o)$-occultation $\mf{o}$ in $G$ with $S_{\mf{o}}\subseteq S_{\mf{a}}$ and $L_{\mf{o}}\subseteq L_{\mf{a}}$.
 \end{theorem}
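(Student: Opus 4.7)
The plan is to prove the theorem by induction on $c$.

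For the base case $c=1$, observe that $(1,o)$-perforation means $G$ has no induced cycle of length at least $o+2$. A closed $\mf{a}^{i-1}$-piece of length at least $o$, together with the common neighbor of its endpoints in $\pi_{\mf{a}}([i-1])$, would form an induced cycle of length at least $o+2$; hence no such closed piece exists, and $\mf{a}$ (of size $s^1=s$) is vacuously $o$-invaded. Being already $2$-ample and interrupted, $\mf{a}$ is a full $(s,o)$-occultation.

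For the inductive step, assume the result for $c-1$ and take $\mf{a}$, a $2$-ample interrupted ordered $s^c$-asterism in a $(c,o)$-perforated $G$. If $\mf{a}^s$ is $o$-invaded, then $\mf{a}^s$ itself is already a full $(s,o)$-occultation. Otherwise, pick the least $i^*\in[s]$ for which $\mf{a}^{i^*}$ fails to be $o$-invaded: there is a closed $\mf{a}^{i^*-1}$-piece $P^*$ of length at least $o$ in which $\pi_{\mf{a}}(i^*)$ has no neighbor. With $v^*$ the common neighbor of $\partial P^*$ in $\pi_{\mf{a}}([i^*-1])$, the induced subgraph $H:=G[V(P^*)\cup\{v^*\}]$ is an induced cycle of length at least $o+2$.

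The technical heart of the argument is to extract from $\mf{a}$ a $2$-ample interrupted ordered sub-asterism $\mf{a}'$ of size $s^{c-1}$ satisfying $S_{\mf{a}'}\subseteq S_{\mf{a}}$, $L_{\mf{a}'}\subseteq L_{\mf{a}}$, and $V(\mf{a}')\cap V(H)=\emptyset$. Disjointness forces $v^*\notin S_{\mf{a}'}$ and $L_{\mf{a}'}$ to lie in one of the two components $L^a,L^b$ of $L_{\mf{a}}\setminus V(P^*)$. The main leverage is $2$-ampleness, which forces each vertex of $L_{\mf{a}}$ to have at most one neighbor in $S_{\mf{a}}$; in particular the two endpoints of $P^*$ admit only $v^*$ as an $S$-neighbor. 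Combined with interruptedness of $\mf{a}$ applied to the two external pieces at each level, this shows that every $v=\pi_{\mf{a}}(k)$ with $k>i^*$ must have a neighbor in $L^a\cup L^b$. Pigeonholing over the two sides (and similarly handling the early elements $\pi_{\mf{a}}([i^*-1])\setminus\{v^*\}$, all of whose $L_{\mf{a}}$-neighbors already avoid $V(P^*)$) leaves $\geq s^{c-1}$ candidates on the larger side, say $L^a$; after trimming the $L^a$-endpoint adjacent to $V(P^*)$ and excluding the finitely many $S$-elements adjacent to the new boundary, we obtain $\mf{a}'$. $2$-ampleness of $\mf{a}'$ is automatic since every $\mf{a}'$-route is an $\mf{a}$-route; interruptedness is inherited because every open $\mf{a}'^{k-1}$-piece contains an open $\mf{a}^{j-1}$-piece (for $j$ the $\mf{a}$-index of $\pi_{\mf{a}'}(k)$), where interruptedness of $\mf{a}$ provides the required neighbor.

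With $\mf{a}'$ in hand, $V(\mf{a}')\cap V(H)=\emptyset$ forces $G[V(\mf{a}')]$ to be $(c-1,o)$-perforated: any $c-1$ pairwise disjoint induced cycles of length at least $o+2$ in $V(\mf{a}')$, together with $H$, would yield $c$ such cycles in $G$, contradicting the hypothesis. Applying the inductive hypothesis to $\mf{a}'$ in $G[V(\mf{a}')]$ produces a full $(s,o)$-occultation $\mf{o}$ with $S_{\mf{o}}\subseteq S_{\mf{a}'}\subseteq S_{\mf{a}}$ and $L_{\mf{o}}\subseteq L_{\mf{a}'}\subseteq L_{\mf{a}}$, completing the induction. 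The main obstacle is unquestionably the construction of $\mf{a}'$: balancing the size $s^{c-1}$ against the various boundary losses and, more subtly, preserving interruptedness under shrinkage of $S$ and cropping of $L$; the bound $s^c$ on $|S_{\mf{a}}|$ appears calibrated precisely to absorb these losses.
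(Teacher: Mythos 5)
Your base case and the ``good'' branch (if $\mf{a}^s$ is already $o$-invaded it is a full $(s,o)$-occultation) are fine, and the overall idea---a missed long closed piece together with its guarding vertex $v^*$ gives a long induced cycle $H$, which one deletes before inducting on $c$---is indeed the engine of the paper's proof. The gap sits exactly where you locate ``the technical heart.'' First, $(c,o)$-perforation forbids $c$ long cycles that are pairwise disjoint \emph{and anticomplete} (this is how the notion is used in every contradiction in the paper, and it is what makes it coincide with $c$-perforation when $o=1$), so your last step needs $V(\mf{a}')$ to be \emph{anticomplete} to $H$, not merely disjoint from it: $c-1$ disjoint, anticomplete long cycles inside $G[V(\mf{a}')]$ together with $H$ contradict nothing unless $H$ is anticomplete to them. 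Second, with your choice of witness (the least $i^*\le s$), that anticompleteness can be unattainable. Condition \ref{INT} does not prevent vertices $\pi_{\mf{a}}(k)$ with $k>i^*$ from having neighbors in the interior of the closed piece $P^*$; worse, once one such vertex does, $2$-ampleness makes the resulting sub-pieces of $P^*$ open (their ends have distinct unique $S$-neighbors), and \ref{INT} then \emph{forces} every later vertex to have a neighbor inside $P^*$. Concretely, take $v^*=\pi_{\mf{a}}(1)$ with exactly two neighbors bounding $P^*$, let $\pi_{\mf{a}}(2)$ miss $P^*$ (so $i^*=2$) and let $\pi_{\mf{a}}(3)$ have a neighbor in $P^{*\,*}$: then all $\pi_{\mf{a}}(k)$ with $k\ge 4$ are forced to meet $P^*$, so at most one vertex of $S_{\mf{a}}$ is anticomplete to $P^*$ and no sub-asterism of size $s^{c-1}$ anticomplete to $H$ exists. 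In addition, $v^*$ may have neighbors scattered along the component $L^a$, so your $L_{\mf{a}'}\subseteq L^a$ need not avoid $N(v^*)$ either; and the inheritance of interruptedness after cropping $L_{\mf{a}}$ is more delicate than your sketch suggests, since an open piece of the cropped asterism ending at a new endpoint of $L_{\mf{a}'}$ corresponds in $\mf{a}$ to a piece extending beyond $L_{\mf{a}'}$, where the neighbor guaranteed by \ref{INT} may lie.

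This is precisely why the paper does not take the least failing index and does not induct on $c$ alone. It inducts on $c+s$, looks at the \emph{last} vertex $x=\pi_{\mf{a}}(s^c)$ and at closed $\mf{a}^{(s-1)^c}$-pieces: if $x$ meets every long such piece it inducts on $s$ and lifts the resulting occultation with the cherry-on-top machinery (Lemmas~\ref{lem:interrupted_top} and~\ref{lem:occultation_top}); if $x$ misses one, then \ref{INT} applied at the top level forces \emph{all} of $S_{\mf{a}}\setminus\{z\}$ to be anticomplete to that piece (any neighbor of another $S$-vertex inside it would create an open $\mf{a}^{s^c-1}$-piece that $x$ must meet), and the new path is carved from a trimmed $\mf{a}$-route between $\pi_{\mf{a}}((s-1)^c+1)$ and $z$ so as to dodge the neighbors of $z$, which is what delivers genuine anticompleteness to $H$ before the induction with $c-1$ is invoked. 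As structured---induction on $c$ only, witness at the least failing index, and only disjointness from $H$---your argument cannot be completed.
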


The proof of Theorem~\ref{thm:interrupted_to_occultation} calls for a few definitions and lemmas. Let $G$ be a graph and let $x\in V(G)$. Let $\mf{a}'$ be an (ordered) asterism in $G$ with $V(\mf{a}')\subseteq V(G)\setminus \{x\}$. We say $x$ is a \textit{cherry on top of $\mf{a}'$ in $G$} if:
\begin{enumerate}[(CH1), leftmargin=19mm, rightmargin=7mm]
    \item\label{CH1} $x$ is anticomplete in $G$ to $\partial L_{\mf{a}'}$; and
    \item\label{CH2} $x$ has a neighbor in every open $\mf{a}'$-piece. In particular, $x$ has a neighbor in $L_{\mf{a}'}$.
\end{enumerate}

\begin{figure}[t!]
    \centering
    \includegraphics[scale=1]{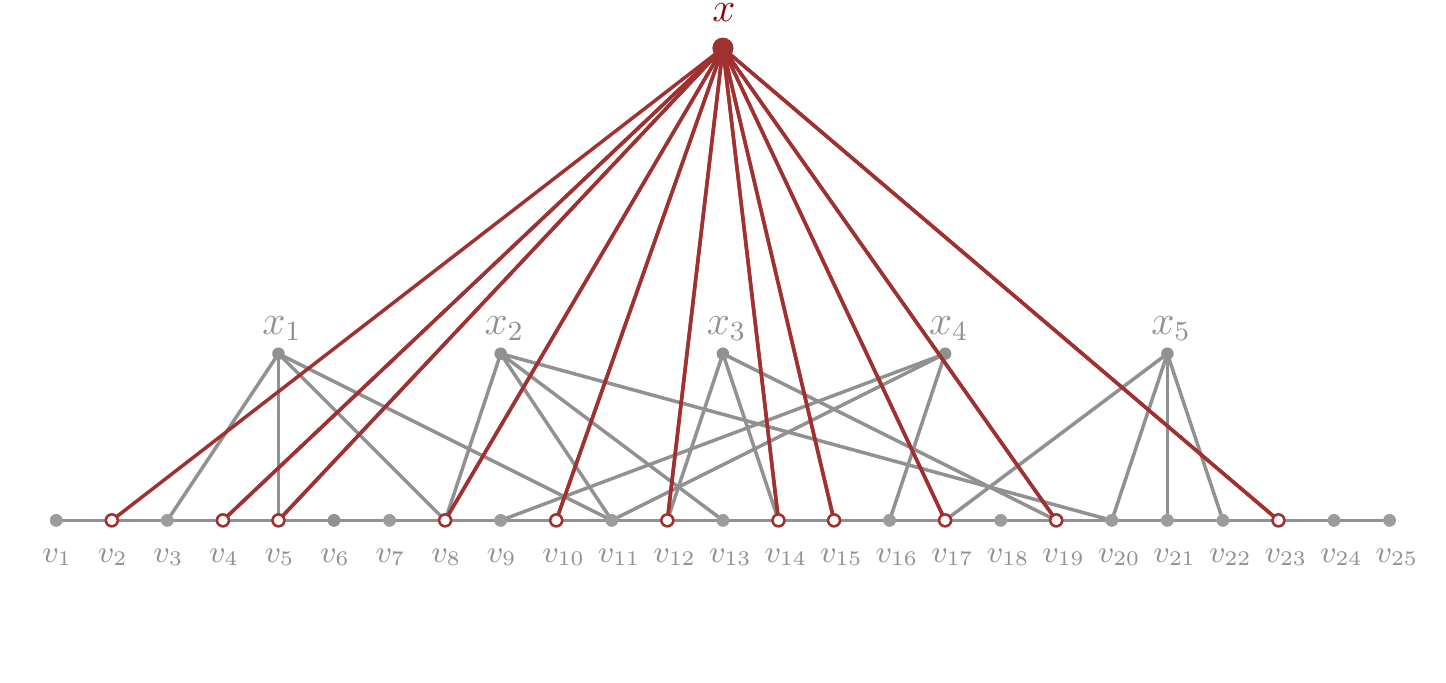}
    \caption{The vertex $x$ is a cherry on top of the asterism $\mf{a}$ from Figure~\ref{fig:asterism}.}
    \label{fig:cherry}
\end{figure}

See Figure~\ref{fig:cherry}. Now, let $G$ be graph, let $x\in V(G)$, let $s' \in \poi \cup \{0\}$ and let $\mf{a}'$ be an ordered $s'$-asterism in $G$ with $V(\mf{a}')\subseteq V(G)\setminus \{x\}$ such that $x$ is a cherry on top of $\mf{a}'$ in $G$. Then by \ref{CH1} and \ref{CH2}, $\cher(\mf{a}',x)=(S_{\mf{a}'}\cup \{x\},L_{\mf{a}'})$ is an ordered $(s'+1)$-asterism in $G$ with $\pi_{\cher(\mf{a}',x)}(s'+1)=x$ and $\pi_{\cher(\mf{a}',x)}(i)=\pi_{\mf{a}'}(i)$ for all $i\in [s']$. In addition, by \ref{CH2}, $\cher(\mf{a}',x)$ is interrupted if and only if $\mf{a}'$ is interrupted. 

One may note that the notion of a ``cherry on top'' arises naturally from viewing interrupted ordered asterisms as ordered asterisms which can be constructed by ``successively adding cherries on top.'' Said more carefully, it is straightforward to observe that given a graph $G$ and $s \in \poi \cup \{0\}$, an ordered $s$-asterism $\mf{a}$ in $G$ is interrupted if and only if for every $i\in [s]$, $\pi_{\mf{a}}(i)$ is a cherry on top of $\mf{a}^{i-1}$ in $G$.

Accordingly, our first lemma,  which we will use both here and in Section~\ref{sec:surgery}, provides a tool for growing interrupted ``sub-asterisms'' of a given asterism by adding one cherry on top at a time. It roughly says the following: let $G$ be a graph, let $\mf{a}$ be an asterism in $G$ which is ``ample enough'' and let $x\in S_{\mf{a}}$. Let $S'\subseteq S_{\mf{a}}\setminus \{x\}$ such that $x$ is a cherry on top of $\mf{a}|S'=(S', L_{\mf{a}})$ in $G$. Then the same is also inherited by an ``optimally'' chosen subpath $L'\subseteq L_{\mf{a}}$, that is, $x$ is a cherry on top of the asterism $(S', L')$ in $G$. To make this notion of ``optimality'' precise, let $G$ be a graph, let $\mf{a}$ be an asterism in $G$ which may or may not be ordered, let $x\in S_{\mf{a}}$ and let $s' \in \poi \cup \{0\}$. By an \textit{$(\mf{a},x,s')$-candidate in $G$} we mean an interrupted ordered $s'$-asterism $\mf{a}'$ in $G$ with $S_{\mf{a}'}\subseteq S_{\mf{a}}\setminus \{x\}$ and $L_{\mf{a}'}\subseteq L_{\mf{a}}$, such that:

\begin{enumerate}[(CA),leftmargin=19mm, rightmargin=7mm]
    \item\label{CA} for every interrupted ordered $s'$-asterism $\mf{a}''$ in $G$ with $S_{\mf{a}'}=S_{\mf{a}''}$, $\pi_{\mf{a}'}=\pi_{\mf{a}''}$ and $L_{\mf{a}'}\subsetneq L_{\mf{a}''}\subseteq L_{\mf{a}}$, there exists $i\in [s'-1]$ such that $\pi_{\mf{a}'}(i)=\pi_{\mf{a}''}(i)$ has a neighbor in $L_{\mf{a}''}\setminus L_{\mf{a}'}$ (and so $s'\geq 2$).
\end{enumerate}

In other words, the path $L_{\mf{a}'}$ is maximal if one requires $N_{L_{\mf{a}'}}(S_{\mf{a}'}\setminus \{\pi_{\mf{a}'}(s')\})$ to remain the same. We deduce that:

\begin{lemma}\label{lem:interrupted_top}
    Let $s\in \poi$. Let $G$ be a graph and let $\mf{a}$ be a $2$-ample $s$-asterism in $G$. Let $x\in S_{\mf{a}}$ and let $\mf{a}'$ be an $(\mf{a},x,s-1)$-candidate in $G$. Assume that $x$ is a cherry on top of $\mf{a}|S_{\mf{a}'}$ in $G$. Then $x$ is a cherry on top of $\mf{a'}$ in $G$.  Consequently, $\cher(\mf{a}',x)$ is a $2$-ample, interrupted ordered $s$-asterism in $G$ with $S_{\cher(\mf{a}',x)}\subseteq S_{\mf{a}}$ and $L_{\cher(\mf{a}',x)}\subseteq L_{\mf{a}}$.
 \end{lemma}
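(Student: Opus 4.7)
My plan is to verify the two conditions (CH1) and (CH2) defining ``$x$ is a cherry on top of $\mf{a}'$''; once they are in hand, the ``consequently'' clause is immediate, because every $\cher(\mf{a}', x)$-route is an $\mf{a}$-route (so $2$-ampleness is inherited) and adding $x$ as a cherry to the interrupted ordered asterism $\mf{a}'$ preserves interruption by definition.

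The central stepping stone, which I would prove first, is the following: for every $v_p \in \partial L_{\mf{a}'} \setminus \partial L_{\mf{a}}$, the neighbor $v_{p-1}$ of $v_p$ in $L_{\mf{a}}$ has an $S_{\mf{a}'}$-neighbor. Otherwise, the extension $\mf{a}'' = (S_{\mf{a}'}, v_{p-1} \dd L_{\mf{a}'})$ (with $\pi_{\mf{a}''} = \pi_{\mf{a}'}$) is a valid ordered asterism in which, for every $i \in [s-1]$, the only altered piece is the external $\mf{a}'^{i-1}$-piece at $v_p$, now simply prepended with $v_{p-1}$; interruption of $\mf{a}'$ is inherited by $\mf{a}''$, and \ref{CA} then forces some $\pi_{\mf{a}'}(i)$ with $i \le s-2$ to be adjacent to $v_{p-1}$, a contradiction. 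Granting this, (CH1) follows quickly: if $x$ has a neighbor $v_p \in \partial L_{\mf{a}'} \setminus \partial L_{\mf{a}}$, then letting $y$ denote the unique (by $1$-ampleness) $S_{\mf{a}'}$-neighbor of $v_{p-1}$, the induced path $y \dd v_{p-1} \dd v_p \dd x$ is an $\mf{a}$-route of length $3$, violating $2$-ampleness.

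For (CH2), the open internal $\mf{a}'$-pieces and those open external $\mf{a}'$-pieces containing a vertex of $\partial L_{\mf{a}}$ coincide with their analogues in $\mf{a}|S_{\mf{a}'}$, so cherry on top of $\mf{a}|S_{\mf{a}'}$ handles these directly. The main obstacle is an open external $\mf{a}'$-piece $P = v_p \dd \cdots \dd v_r$ with $v_p \in \partial L_{\mf{a}'} \setminus \partial L_{\mf{a}}$. The stepping stone gives $v_{p-1} \in N := N_{L_{\mf{a}}}(S_{\mf{a}'})$, so the $(\mf{a}|S_{\mf{a}'})$-piece $\tilde P$ containing $v_p$ is precisely $v_{p-1} \dd v_p \dd \cdots \dd v_r$; the crux is to show $\tilde P$ is open. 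Suppose not, so $v_{p-1}$ and $v_r$ share an $S_{\mf{a}'}$-neighbor $y = \pi_{\mf{a}'}(j)$ (unique by ampleness). If $j \le s-2$, the open external $\mf{a}'^{j}$-piece at $v_p$ is precisely $P$ (as $v_r$ becomes a cut in $\mf{a}'^{j}$), and the interrupted property of $\mf{a}'$ demands a $\pi_{\mf{a}'}(j+1)$-neighbor in $P$; this is impossible because $P^*$ avoids $S_{\mf{a}'}$, $v_p$ has no $S_{\mf{a}'}$-neighbor, and $v_r$'s only $S_{\mf{a}}$-neighbor is $y \ne \pi_{\mf{a}'}(j+1)$. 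If instead $j = s-1$, I would extend $L_{\mf{a}'}$ leftwards through the maximal block of consecutive vertices of $N$ containing $v_{p-1}$ (which, by iterated $2$-ampleness, all share the single neighbor $y$) to the first vertex $v_{l-1} \notin N$; the resulting asterism is interrupted because $y = \pi_{\mf{a}'}(s-1) \notin \pi_{\mf{a}'}([i-1])$ for any $i \in [s-1]$ (so the added vertices introduce no new cuts), but \ref{CA} now fails since none of the added vertices is adjacent to any $\pi_{\mf{a}'}(i)$ with $i \le s-2$. Thus $\tilde P$ is open; cherry on top of $\mf{a}|S_{\mf{a}'}$ then furnishes a neighbor of $x$ in $\tilde P$, and this neighbor cannot be $v_{p-1}$ (else $y \dd v_{p-1} \dd x$ is an $\mf{a}$-route of length $2$), so it lies in $P$, as required.
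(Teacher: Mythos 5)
Your proof is correct and takes essentially the same route as the paper's. The one organizational difference worth noting: your ``stepping stone'' only records that the external neighbor $v_{p-1}$ of $v_p$ lies in $N_{L_{\mf{a}}}(S_{\mf{a}'})$, whereas the paper's claim~(4.1) establishes the sharper fact that $v_{p-1}$ has a neighbor in $\pi_{\mf{a}'}([s-2])$ (absorbing, within the stepping stone itself, the block-of-$N$ extension argument you run in your $j=s-1$ subcase of (CH2)). With the stronger claim in hand, the paper's (CH2) argument is a touch shorter -- it pins down that the other end of $\tilde P$ is a $\pi_{\mf{a}'}(s-1)$-neighbor via interruptedness, so $\tilde P$ is immediately open by $2$-ampleness -- but your two-case analysis (``$j\le s-2$ handled via (INT), $j=s-1$ handled by extending $L_{\mf{a}'}$'') reaches the same conclusion; the $j=s-1$ branch is precisely the extension argument the paper runs inside its proof of (4.1). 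Both versions use identical ingredients: the extension/(CA) interplay, $2$-ampleness to kill short routes, and the cherry-on-top hypothesis for $\mf{a}|S_{\mf{a}'}$.
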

 \begin{proof}
We need to show that $\mf{a}'$ and $x$ satisfy \ref{CH1} and \ref{CH2}. Let us begin with the following:
 
     \sta{\label{st:ends_work}Let $u\in \partial L_{\mf{a}'}\setminus \partial L_{\mf{a}}$. Then the unique neighbor of $u$ in $L_{\mf{a}}\setminus L_{\mf{a}'}$ has a neighbor in $\pi_{\mf{a}'}([s-2])$.}

Suppose not. Let $u'$ be the unique neighbor of $u$ in $L_{\mf{a}}\setminus L_{\mf{a}'}$. Then $u'$ is anticomplete to $\pi_{\mf{a}'}([s-2])$. Since $u$ is not an end of $L_{\mf{a}}$ and $\mf{a}'$ is an $(\mf{a},x,s-1)$-candidate in $G$, it follows from \ref{CA} that $S_{\mf{a}'}\neq \emptyset$ (indeed, we have $s\geq 3$).  Also, $u'$ is adjacent to $x'=\pi_{\mf{a}'}(s-1)$, as otherwise $\mf{a}''=(S_{\mf{a}'}, L_{\mf{a}'}\cup \{u'\})$ is an interrupted ordered $(s-1)$-asterism in $G$ violating \ref{CA}. Let $u''$ be the end of $L_{\mf{a}}$ for which $u\dd L_{\mf{a}}\dd u''$ contains $u'$. Since $x'$ is adjacent to $u'$ and $x'$ is not adjacent to $u''$, traversing $u'\dd L_{\mf{a}}\dd u''$ from $u'$ to $u''$, we may choose the first vertex $w$ which is not adjacent to $x'$. It follows that $u'\neq w$ but $u''$ and $w$ might be the same. Let $w'$ be the unique neighbor of $w$ in $u'\dd L_{\mf{a}}\dd w$ (so $u'$ and $w'$ might be the same).  Since $\mf{a}$ is $2$-ample and $x'$ is complete to $u'\dd L_{\mf{a}}\dd w'$, it follows that $\pi_{\mf{a}'}([s-2])$ is anticomplete to $u'\dd L_{\mf{a}}\dd w$. Now, let $v$ be the end of $L_{\mf{a'}}$ distinct from $u$ and let $L''=v\dd L_{\mf{a}}\dd w$. Then $\mf{a}''=(S_{\mf{a}'}, L'')$ is an interrupted ordered $(s-1)$-asterism in $G$ with $S_{\mf{a}'}=S_{\mf{a}''}$, $\pi_{\mf{a}'}=\pi_{\mf{a}''}$ and  $L_{\mf{a}'}\subsetneq L''=L_{\mf{a}''}\subseteq L_{\mf{a}}$, for which $\pi_{\mf{a}''}([s-2])=\pi_{\mf{a}'}([s-2])$ is anticomplete to $L_{\mf{a}''}\setminus L_{\mf{a}'}=u'\dd L_{\mf{a}}\dd w$. But then by \ref{CA}, $\mf{a}'$ is not an $(\mf{a},x,s-1)$-candidate in $G$, a contradiction. This proves \eqref{st:ends_work}.
\medskip

From \eqref{st:ends_work} and the fact that $\mf{a}$ is $2$-ample, it follows that $x$ is anticomplete to the ends of $L_{\mf{a}'}$, and so $\mf{a}'$ and $x$  satisfy \ref{CH1}. Also, we claim that:

\sta{\label{st:x_does_CH3}Let $P$ be an open $\mf{a}'$-piece. Then $x$ has a neighbor in $P$.}

First, assume that $P$ is an internal open $\mf{a}'$-piece. Then $P$ is an open $\mf{a}|S_{\mf{a}'}$-piece. Since $x$ is a cherry on top of $\mf{a}|S_{\mf{a}'}$ in $G$, it follows from \ref{CH2} that $x$ has a neighbor in $P$, as desired. Next, assume that $P$ is an external $\mf{a}'$-piece. Then $P$ and $L_{\mf{a}'}$ share at least one end, say $u$. By \eqref{st:ends_work}, either $u$ is an end of $L_{\mf{a}}$, or the unique neighbor $u'$ of $u$ in $L_{\mf{a}}\setminus L_{\mf{a}'}$ is adjacent to $\pi_{\mf{a}'}(i)$ for some $i\in [s-2]$. In the former case, $P$ is an external $\mf{a}|S_{\mf{a}'}$-piece, and so $P$ is an open $\mf{a}|S_{\mf{a}'}$-piece. Again, since $x$ is a cherry on top of $\mf{a}|S_{\mf{a}'}$ in $G$, it follows from \ref{CH2} that $x$ has a neighbor in $P$. In the latter case, traversing $L_{\mf{a}'}$ starting at $u$, let $u''$ be the first vertex with a neighbor in $S_{\mf{a}'}$. Since $\mf{a}'$ is interrupted, it follows that $u''$ is a neighbor of $\pi_{\mf{a}'}(s-1)$, and so there exists an $\mf{a}|S_{\mf{a}'}$-route $R$ from $\pi_{\mf{a}'}(s-1)$ to $\pi_{\mf{a}'}(i)$ such that $P=R^*\setminus \{u'\}$. Note that since $\mf{a}$ is $2$-ample, the ends of $R^*$ have no common neighbor in $S_{\mf{a}'}$, and so $R^*$ is an open $\mf{a}|S_{\mf{a}'}$-piece. Therefore, since $x$ is a cherry on top of $\mf{a}|S_{\mf{a}'}$ in $G$, it follows from \ref{CH2} that $x$ has a neighbor in $R^*$. On the other hand, since $x''\in S_{\mf{a}}\setminus \{x\}$ is adjacent to $u'$ and $\mf{a}$ is $2$-ample, it follows that $x$ is not adjacent to $u'$. But now $x$ has a neighbor in $P$. This proves \eqref{st:x_does_CH3}.
\medskip

By \eqref{st:x_does_CH3}, $\mf{a}'$ and $x$  satisfy \ref{CH2}. This completes the proof of Lemma~\ref{lem:interrupted_top}.
\end{proof}

We also need the following, which is an application of Lemma~\ref{lem:interrupted_top}:

\begin{lemma}\label{lem:occultation_top}
     Let $o\in \poi$ and let $r,r',s$ be integers such that $r>r'\geq s-1\geq 0$. Let $G$ be a graph and let $\mf{a}$ be a $2$-ample, interrupted ordered $r$-asterism in $G$. Assume that $\pi_{\mf{a}}(r)$ has a neighbor in every closed $\mf{a}^{r'}$-piece of length at least $o$. Assume also that there exists a full $(s-1,o)$-occultation $\mf{o}'$ in $G$ with $S_{\mf{o}'}\subseteq S_{\mf{a}^{r'}}$ and $L_{\mf{o}'}\subseteq L_{\mf{a}}$. Then there exists a full $(s,o)$-occultation $\mf{o}$ in $G$ with $S_{\mf{o}}\subseteq S_{\mf{a}}$ and $L_{\mf{o}}\subseteq L_{\mf{a}}$.
 \end{lemma}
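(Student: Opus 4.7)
The plan is to construct $\mf{o}$ as $\cher(\mf{o}'',x)$, where $x:=\pi_{\mf{a}}(r)$ and $\mf{o}''$ is a suitable extension of $\mf{o}'$ chosen so that Lemma~\ref{lem:interrupted_top}, applied with ambient asterism $\mf{c}:=\mf{a}|(S_{\mf{o}'}\cup\{x\})$ (a $2$-ample $s$-asterism with $L_{\mf{c}}=L_{\mf{a}}$), produces an ample, interrupted ordered $s$-asterism. The one ingredient not handled by that lemma, namely $o$-invasion at level $s$, will be supplied by the hypothesis on closed $\mf{a}^{r'}$-pieces of length at least $o$.

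First I would construct the candidate. Let $\mca{E}$ denote the collection of subpaths $L\subseteq L_{\mf{a}}$ with $L\supseteq L_{\mf{o}'}$ such that $(S_{\mf{o}'},L)$, ordered by $\pi_{\mf{o}'}$, is an interrupted $(s-1)$-asterism and $L\setminus L_{\mf{o}'}$ is anticomplete to $\pi_{\mf{o}'}([s-2])$. Since $L_{\mf{o}'}\in\mca{E}$, a maximal element $L_{\mf{o}''}\in\mca{E}$ exists; set $\mf{o}'':=(S_{\mf{o}'},L_{\mf{o}''})$ with ordering $\pi_{\mf{o}'}$. If $L'''\supsetneq L_{\mf{o}''}$ made $(S_{\mf{o}'},L''')$ into an interrupted asterism with $L'''\setminus L_{\mf{o}''}$ anticomplete to $\pi_{\mf{o}'}([s-2])$, then $L'''\in\mca{E}$, contradicting maximality; hence $\mf{o}''$ satisfies~\ref{CA} and is an $(\mf{c},x,s-1)$-candidate.

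Next, I would verify that $x$ is a cherry on top of $\mf{a}|S_{\mf{o}'}$. The asterism condition on $\mf{a}$ already gives $x$ anticomplete to $\partial L_{\mf{a}}$, so the substance lies in showing $x$ has a neighbor in each open $\mf{a}|S_{\mf{o}'}$-piece $Q$. Consider the first $\mf{a}^{r-1}$-piece $P_1\subseteq Q$ starting at one end of $Q$. By $2$-ampleness, each vertex of $L_{\mf{a}}$ has a unique $S_{\mf{a}}$-neighbor; moreover, the endpoints of $Q$ are either ends of $L_{\mf{a}}$ or have this neighbor in $S_{\mf{o}'}$, whereas any $\mf{a}^{r-1}$-break point in $Q^{*}$ has its neighbor in $S_{\mf{a}^{r-1}}\setminus S_{\mf{o}'}$. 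A short case analysis on the second endpoint of $P_1$ then shows $P_1$ is open, and interruption of $\mf{a}$ furnishes a neighbor of $x$ in $P_1\subseteq Q$. Lemma~\ref{lem:interrupted_top} now yields $\mf{o}:=\cher(\mf{o}'',x)$, a $2$-ample (hence ample), interrupted ordered $s$-asterism with $S_{\mf{o}}\subseteq S_{\mf{a}}$ and $L_{\mf{o}}=L_{\mf{o}''}\subseteq L_{\mf{a}}$.

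The main and subtlest remaining step is verifying $o$-invasion. For $i\leq s-1$, since $L_{\mf{o}''}\setminus L_{\mf{o}'}$ is anticomplete to $\pi_{\mf{o}'}([s-2])\supseteq\pi_{\mf{o}'}([i-1])$, every $\mf{o}^{i-1}$-break point lies in $L_{\mf{o}'}$; consequently the closed (necessarily internal) $\mf{o}^{i-1}$-pieces coincide with the closed ${\mf{o}'}^{i-1}$-pieces, and the condition transfers from $\mf{o}'$. For $i=s$, let $P$ be a closed $\mf{o}''$-piece of length at least $o$ whose ends share a common neighbor $y\in S_{\mf{o}'}\subseteq S_{\mf{a}^{r'}}$. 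If $P^{*}$ is anticomplete to $S_{\mf{a}^{r'}}$, then $P$ is itself a closed $\mf{a}^{r'}$-piece of length at least $o$ and the hypothesis delivers a neighbor of $x$ in $P$. Otherwise the first $\mf{a}^{r'}$-piece $Q_0$ of $P$ from one end terminates at an $\mf{a}^{r'}$-break point whose unique $S_{\mf{a}}$-neighbor lies in $S_{\mf{a}^{r'}}\setminus S_{\mf{o}'}$ and therefore differs from $y$, so $Q_0$ is an open $\mf{a}^{r'}$-piece; rerunning the argument of the previous paragraph inside $Q_0$ produces an open $\mf{a}^{r-1}$-piece $P_1\subseteq Q_0\subseteq P$ containing a neighbor of $x$, as required.
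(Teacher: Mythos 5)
Your proof is correct and follows essentially the same strategy as the paper's: form a candidate by maximizing the path, show $x=\pi_{\mf{a}}(r)$ is a cherry on top via interruptedness of $\mf{a}$, invoke Lemma~\ref{lem:interrupted_top}, and then supply $o$-invasion at level $s$ by the closed-versus-open $\mf{a}^{r'}$-piece dichotomy. The one structural difference is where the maximization lives. The paper re-chooses $\mf{o}'$ to be $L$-maximal \emph{among all full $(s-1,o)$-occultations} with $S_{\mf{o}'}\subseteq S_{\mf{a}^{r'}}$ and $L_{\mf{o}'}\subseteq L_{\mf{a}}$; the cherry then does not change $L$, so $o$-invasion at levels $i\leq s-1$ is immediate, and \ref{CA} is read off from maximality (though, as stated, this implicitly requires noting that a \ref{CA}-violating extension would again be a full occultation, for the same coincidence-of-closed-pieces reason you spell out). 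You instead keep the given $\mf{o}'$ and maximize $L$ over interrupted extensions $\mf{o}''$ with the anticompleteness constraint on $\pi_{\mf{o}'}([s-2])$, which makes the verification of \ref{CA} completely explicit, at the cost of a short separate argument that the closed $\mf{o}^{i-1}$-pieces for $i\leq s-1$ still sit inside $L_{\mf{o}'}$ and coincide with the $(\mf{o}')^{i-1}$-pieces. Both routes are sound and nearly equivalent in length; yours is a little more transparent about why \ref{CA} holds, the paper's is a little quicker at the $i\leq s-1$ step.
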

\begin{proof}
We write $x=\pi_{\mf{a}}(r)$. Choose a full $(s-1,o)$-occultation $\mf{o}'$ in $G$ with $S_{\mf{o}'}\subseteq S_{\mf{a}^{r'}}$ and  $L_{\mf{o}'}\subseteq L_{\mf{a}}$, such that $L_{\mf{o}'}$ is maximal with respect to inclusion. In particular,  $\mf{o}'$ is ample, interrupted and $o$-invaded with $S_{\mf{o}'}\subseteq S_{\mf{a}^{r'}}\subseteq S_{\mf{a}}\setminus \{x\}$. We further deduce that: 

\sta{\label{st:x_is_cherry} $\mf{o}'$ is an $(\mf{a},x,s-1)$-candidate in $G$ and $x$ is a cherry on top of $\mf{a}|S_{\mf{o}'}$ in $G$.}

Note that from the maximality of $L_{\mf{o}'}$, it follows immediately that $\mf{a}$, $\mf{o}'$ and $x$ satisfy \ref{CA}. Therefore, $\mf{o}'$ is an $(\mf{a},x,s-1)$-candidate in $G$. It remains to show that $x$ is a cherry on top of $\mf{a}|S_{\mf{o}'}$. To that end, we need to argue that $\mf{a}|S_{\mf{o}'}$ and $x$ satisfy \ref{CH1} and \ref{CH2}. Observe that \ref{CH1} follows immediately from the fact that $L_{\mf{a}|S_{\mf{o}'}}=L_{\mf{a}}$. For \ref{CH2}, let $P$ be an open $\mf{a}|S_{\mf{o}'}$-piece. Since $S_{\mf{o}'}\subseteq S_{\mf{a}^{r'}}\subseteq S_{\mf{a}^{r-1}}$, it follows that $P$ contains an open $\mf{a}^{r-1}$-piece $P'$. But now since $\mf{a}$ is interrupted, it satisfies \ref{INT} for $i=r$, that is, $x=\pi_{\mf{a}}(r)$ has a neighbor in the open $\mf{a}^{r-1}$-piece $P'$, and so $x$ has a neighbor in $P$. This proves \eqref{st:x_is_cherry}.
\medskip

In view of \eqref{st:x_is_cherry}, we can apply Lemma~\ref{lem:interrupted_top} to $\mf{a}, \mf{o}'$ and $x$, and deduce that $\mf{o}=\cher(\mf{o'},x)$ is a $2$-ample, interrupted ordered $s$-asterism in $G$ with $S_{\mf{o}}=S_{\mf{o}'}\cup \{x\}\subseteq S_{\mf{a}}$ and $L_{\mf{o}}\subseteq L_{\mf{a}}$. Also, we have:

\sta{\label{f_is_o_invaded}$\mf{o}$ is $o$-invaded.}

We need to prove that $\mf{o}$ satisfies \ref{OI} for every $i\in [s]$. This is immediate for $i\in [s-1]$ as $\mf{o}'$ is $o$-invaded. For $i=s$, let $P$ be a closed $\mf{o}^{s-1}$-piece of length at least $o$. Our goal is to show that $\pi_{\mf{o}}(s)=x$ has a neighbor in $P$. Since $S_{\mf{o}^{s-1}}\subseteq S_{\mf{a}^{r'}}\subseteq S_{\mf{a}^{r-1}}$, it follows that either $P$ is a closed $\mf{a}^{r'}$-piece, or $P$ contains an open $\mf{a}^{r'}$-piece, which in turn implies that $P$ contains an open $\mf{a}^{r-1}$-piece $P'$. In the former case, $\pi_{\mf{o}}(s)=x$ has a neighbor in $P$ due to the assumption of Lemma~\ref{lem:occultation_top} that $\pi_{\mf{a}}(r)=x$ has a neighbor in every closed $\mf{a}^{r'}$-piece of length at least $o$. In the latter case, since $\mf{a}$ is interrupted, $\mf{a}$ satisfies \ref{INT} for $i=r$. In particular, $x=\pi_{\mf{a}}(r)$ has a neighbor in the open $\mf{a}^{r-1}$-piece $P'$, and so $x$ has a neighbor in $P$. This proves \eqref{f_is_o_invaded}.
\medskip

In conclusion, we have shown that $\mf{o}$ is a $2$-ample, interrupted and $o$-invaded ordered $s$-asterism in $G$ with $S_{\mf{o}}\subseteq S_{\mf{a}}$ and $L_{\mf{o}}\subseteq L_{\mf{a}}$. Hence, $\mf{o}$ is a full $(s,o)$-occultation in $G$  with $S_{\mf{o}}\subseteq S_{\mf{a}}$ and $L_{\mf{o}}\subseteq L_{\mf{a}}$. This completes the proof of Lemma~\ref{lem:occultation_top}.
\end{proof}

We can now prove the main result of this section:

\begin{proof}[Proof of Theorem~\ref{thm:interrupted_to_occultation}]
We proceed by induction on $c+s\geq 1$. The result is immediate for $s\in \{0,1\}$ as in this case $\mf{a}$ is a full $(s,o)$-occultation in $G$. So we may assume that $s\geq 2$, which in turn implies that $c+s\geq 3$.

 Let $G$ be a $(c,o)$-perforated graph, let $\mf{a}$ be a $2$-ample, interrupted ordered $s^c$-asterism in $G$, and suppose for a contradiction that there is no full $(s,o)$-occultation $\mf{o}$ in $G$ with $S_{\mf{o}}\subseteq S_{\mf{a}}$ and $L_{\mf{o}}\subseteq L_{\mf{a}}$. Write $r=s^c$, $r'=(s-1)^c$ and $x=\pi_{\mf{a}}(r)$. It follows that $r>r'\geq 1$. We deduce that:

\sta{\label{st:missed_piece} There exists a closed $\mf{a}^{r'}$-piece $P$ of length at least $o$ such that $x$ is anticomplete to $P$.}

Suppose not. Then $x=\pi_{\mf{a}}(r)$ has a neighbor in every closed $\mf{a}^{r'}$-piece $P$ of length at least $o$. Note that $\mf{a}^{r'}$ is an ordered $r'$-astersim in $G$. Also, $\mf{a}^{r'}$ is both $2$-ample and interrupted, because $\mf{a}$ is. Consequently, by the induction hypothesis, there is a full $(s-1,o)$-occultation $\mf{o}'$ in $G$ with $S_{\mf{o}'}\subseteq S_{\mf{a}^{r'}}$ and $L_{\mf{o}'}\subseteq L_{\mf{a}^{r'}}=L_{\mf{a}}$. But now by Lemma~\ref{lem:occultation_top}, there exists a full $(s,o)$-occultation $\mf{o}$ in $G$ with $S_{\mf{o}}\subseteq S_{\mf{a}}$ and $L_{\mf{o}}\subseteq L_{\mf{a}}$, a contradiction. This proves \eqref{st:missed_piece}.
\medskip

Henceforth, let $P$ be as in \eqref{st:missed_piece}. Then there exists a vertex $z\in S_{\mf{a}^{r'}}=\pi_{\mf{a}}([r'])$ which is adjacent to both ends of $P$. It follows $H=P\cup \{z\}$ is a cycle in $G$ of length at least $o+2$. As a result, we have $c\geq 2$. Moreover, we claim that:

\sta{\label{st:top_anti_to_C} $S_{\mf{a}}\setminus \{z\}$ is anticomplete to $P$, and so to $H$.}

By \eqref{st:missed_piece}, $x$ is anticomplete to $P$. Suppose for a contradiction that $x'\in S_{\mf{a}}\setminus \{x,z\}$ has a neighbor in $P$. Then $P$ contains the interior of an $\mf{a}$-route $R$ from $x'$ to $z$. Since $x',z\in S_{\mf{a}}\setminus \{x\}=S_{\mf{a}^{r-1}}$, it follows that $R$ is an $\mf{a}^{r-1}$-route, and so $R^*$ is an open $\mf{a}^{r-1}$-piece. Therefore, since $\mf{a}$ is interrupted,  by \ref{INT} for $i=r$, $x=\pi_{\mf{a}}(r)$ has a neighbor in the interior of the open $\mf{a}^{r-1}$-piece $R^*$. But then $x$ has a neighbor in $P$, a contradiction. This proves \eqref{st:top_anti_to_C}.
\medskip

Now, since $c,s\geq 2$, we have:
$$r-r'=s^c-(s-1)^c=\sum_{i=1}^{c}s^{c-i}(s-1)^{i-1}\geq s^{c-1}+s^{c-2}(s-1);$$
which in turn implies that:
$$r-(r'+1
)\geq s^{c-1}+s^{c-2}(s-1)-1\geq s^{c-1}>1.$$
In particular, for $S=\pi_{\mf{a}}([r]\setminus [r - s^{c-1}])$, we have $S\subseteq \pi_{\mf{a}}([r]\setminus [r'+1])$ with $|S|=s^{c-1}>1$ and  $x=\pi_{\mf{a}}(r)\in S$. Let $y=\pi_{\mf{a}}(r'+1)$. It follows that $y,z\notin S$. By \eqref{st:top_anti_to_C}, $S$ is anticomplete to $H$.

Next, note that since both $y,z$ have neighbors in $L_{\mf{a}}$, there exists an $\mf{a}$-route $R$ from $y$ to $z$. Let $Q=R^*$. Since $\mf{a}$ is $2$-ample, it follows that $Q^*\neq \emptyset$. Also, by \eqref{st:top_anti_to_C}, $P$ and $Q$ share at most one vertex (which would be a common end of $P$ and $Q$). Let $\partial Q=\{u,v\}$ such that $y$ and $z$ are adjacent to $u
$ and $v$, respectively. Let $L=Q\setminus (N_{Q}[u]\cup N_{Q}[v])$. Then we have:

\sta{\label{st:new_ast_anti} $L$ is anticomplete to $H$. Also, every vertex in $S$ has a neighbor in $L^*$, and $S$ is anticomplete to $\partial L$. In particular, we have $L\neq \emptyset$.}

The first assertion is immediate from the definition of $L$. For the second assertion,
let $x'\in S$. Then we have $x'=\pi_{\mf{a}}(i)$ for some $i\in [r]\setminus [r'+1]$. Since $R$ is an $\mf{a}$-route from $y$ to $z$, it follows that $R$ is an $\mf{a}^{i-1}$-route, and so $Q$ contains an open $\mf{a}^{i-1}$-piece $Q'$. Therefore, by the assumption that $\mf{a}$ is interrupted, $x'$ has a neighbor in $Q'\subseteq Q$. On the other hand, note that $y,z\in S_{\mf{a}}\setminus \{x'\}$, $y$ is adjacent to $u$ and $z$ is adjacent to $v$. Therefore, since $\mf{a}$ is $2$-ample, it follows that $x'$ has a neighbor in $L^*$ and $S$ is anticomplete to the ends of $L$. This proves \eqref{st:new_ast_anti}.
\medskip

We are almost done. Let $G'=G[S\cup L]$. From \eqref{st:new_ast_anti}, we conclude that $\mf{a}'=(S,L)$ is an ordered $s^{c-1}$-asterism in $G'$ with $\pi_{\mf{a}'}(i)=\pi_{\mf{a}}(r-s^{c-1}+i)$ for every $i\in [s^{c-1}]$. Also, $\mf{a}'$ is both interrupted and $2$-ample, as so is $\mf{a}$. Recall the assumption that there is no full $(s,o)$-occultation $\mf{o}$ in $G$ with $S_{\mf{o}}\subseteq S_{\mf{a}}$ and $L_{\mf{o}}\subseteq L_{\mf{a}}$. It follows that there is no full $(s,o)$-occultation $\mf{o}$ in $G'$ with $S_{\mf{o}}\subseteq S_{\mf{a}'}$ and $L_{\mf{o}}\subseteq L_{\mf{a}'}$. Hence, by the induction hypothesis applied to $G'$ and $\mf{a}'$, $G'$ contains $c-1$ pairwise disjoint and anticomplete cycles $H_1,\ldots, H_{c-1}$, each of length at least $o+2$. On the other hand, by \eqref{st:top_anti_to_C} and \eqref{st:new_ast_anti}, $V(G')$ is anticomplete to $H$ in $G$. But now $H,H_1,\ldots, H_{c-1}$ is a collection of $c$ pairwise disjoint and anticomplete cycles in $G$, each of length at least $o+2$, a contradiction with the assumption that $G$ is $(c,o)$-perforated. This completes the proof of Theorem~\ref{thm:interrupted_to_occultation}.
 \end{proof}

\section{Bundles and Constellations}\label{sec:bundle}
Broadly, our proofs in this paper deal with subgraphs that consist of several pairwise vertex-disjoint induced paths, together with a few other vertices marked as ``special'' (which may or may not belong to the paths). We call these subgraphs ``bundles'' and the goal of this section is to prove certain results to control the extra edges within a bundle preventing it from being induced, and also the edges between bundles preventing them from being anticomplete. The main result is Theorem~\ref{thm:bundlethm} below, of which the proof is in multiple steps and relies on various Ramsey-type arguments. Intuitively, when the extra edges are prevalent, we find an induced subgraph (called a ``plain constellation'') which is the right place to look for the full occultation, and when the extra edges are rare, we find several pairwise disjoint and anticomplete bundles, which we then use to find several pairwise disjoint and anticomplete (long) cycles.

Let us now give the exact definitions. Let $G$ be a graph and let $l\in \poi$. By an \textit{$l$-polypath in $G$} we mean a set $\mca{L}$ of $l$ pairwise disjoint paths in $G$. Given an $l$-polypath $\mca{L}$ in $G$, we say $\mca{L}$ is \textit{plain} if every two distinct paths $L,L'\in \mca{L}$ are anticomplete in $G$. Also, two polypaths $\mca{L}$ and $\mca{L}'$ in $G$ are said to be \textit{disentangled} if $V(\mca{L})\cap V(\mca{L}')=\emptyset$. For $s \in \poi \cup \{0\}$, an \textit{$(s,l)$-bundle in $G$} is a pair $\mf{b}=(S_{\mf{b}},\mca{L}_{\mf{b}})$ where $S_{\mf{b}}\subseteq V(G)$ with $|S_{\mf{b}}|=s$ and $\mca{L}_{\mf{b}}$ is an $l$-polypath in $G$ (note that $S_{\mf{b}}$ and $V(\mca{L}_{\mf{b}})$ are not necessarily disjoint.) By an \textit{$(\leq s, l)$-bundle in $G$}, we mean an $(s',l)$-bundle in $G$ for some integer $0 \leq s'\leq s$. Given an $(s,l)$-bundle $\mf{b}$ in $G$, we write $V(\mf{b})=S_{\mf{b}}\cup V(\mca{L}_{\mf{b}})$, and we say that $\mf{b}$ is \textit{plain} if the $l$-polypath $\mca{L}_{\mf{b}}$ is plain. Also, two bundles $\mf{b}$ and $\mf{b}'$ in a graph $G$ are said to be \textit{disentangled} if $V(\mf{b})\cap V(\mf{b}')=\emptyset$. 

As mentioned above, the main result of this section, Theorem~\ref{thm:bundlethm} below, is a Ramsey-type result concerning pairwise disentangled plain bundles in perforated graphs. This involves a specific type of bundles which we need to define separately. Given a graph $G$ and   $s \in \poi \cup \{0\}$ and $l\in \poi$,  an \textit{$(s,l)$-constellation in $G$} is an $(s,l)$-bundle $\mf{c}=(S_{\mf{c}},\mca{L}_{\mf{c}})$ in $G$ such that $S_{\mf{c}}$ is a stable set (of cardinality $s$) in $G\setminus V(\mca{L}_{\mf{c}})$, and every $s\in S_{\mf{c}}$ has a neighbor in every path $L\in \mca{L}_{\mf{c}}$ (in particular, the vertices in $S_{\mf{c}}$ may be adjacent to the ends of a path $L\in \mca{L}_{\mf{c}}$, and so $(S_\mf{c},L)$ is not necessarily an $s$-asterism in $G$). See Figure~\ref{fig:constellation}. See also \cite{cocks} where the somewhat similar notion of a ``$t$-sail'' is studied.
\begin{figure}[t!]
    \centering
    \includegraphics[scale=0.7]{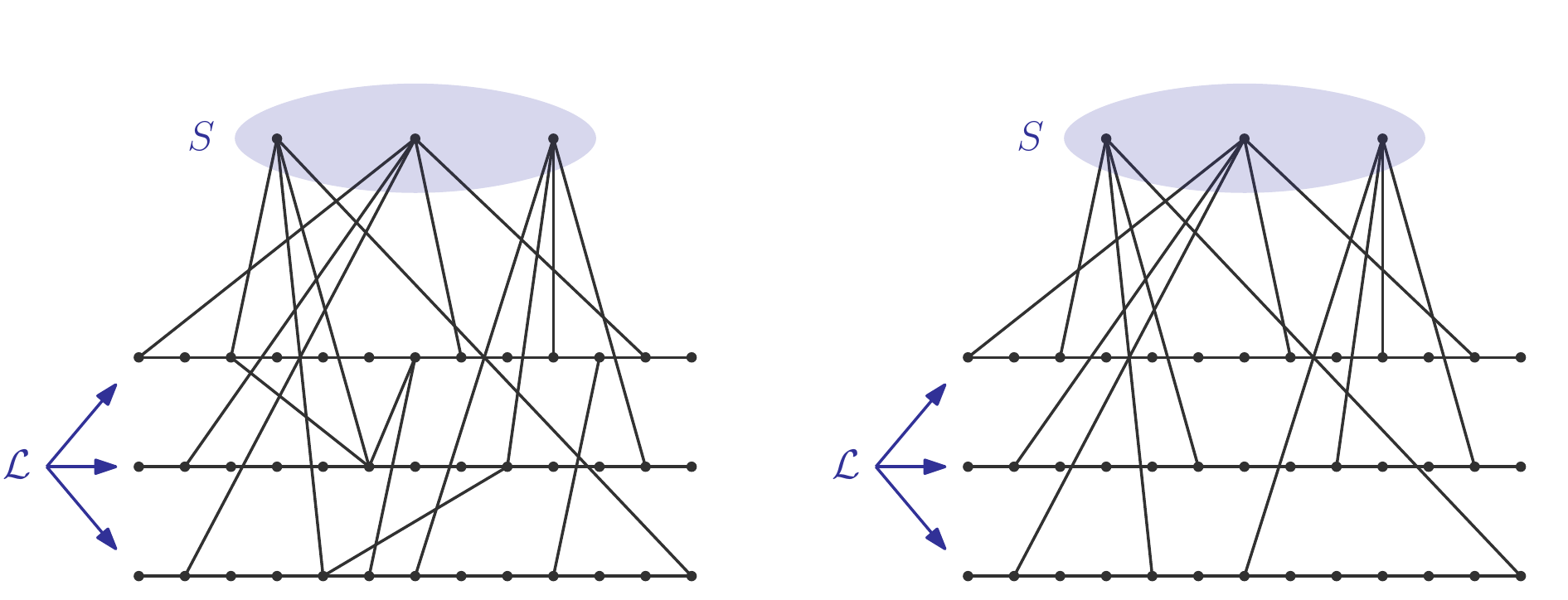}
    \caption{A $(3,3)$-constellation which is not plain (left) and one which is plain (right).}
    \label{fig:constellation}
\end{figure}

\begin{theorem}\label{thm:bundlethm}
    \sloppy For all   $a,b,c,h,l,o,s,t\in \poi$, there are constants $\Theta=\Theta(a,b,c,h,l,o,s,t)\in \poi$ and $\theta=\theta(a,b,c,h,l,o,s,t)\in \poi$ with the following property. Let $G$ be a $(c,o)$-perforated graph and let $\mf{T}$ be a collection of $\Theta$ pairwise disentangled plain $(\leq b,\theta)$-bundles in $G$. Then one of the following holds.
    \begin{enumerate}[\rm (a)]
        \item\label{thm:bundlethm_a} $G$ contains either $K_t$ or $K_{t,t}$.
        \item\label{thm:bundlethm_b} There exists a plain $(s,l)$-constellation in $G$.
        \item\label{thm:bundlethm_c} There exists $\mf{A}\subseteq \mf{T}$ with $|\mf{A}|=a$ and $\mca{H}_{\mf{b}}\subseteq \mca{L}_{\mf{b}}$ with $|\mca{H}_{\mf{b}}|=h$ for each $\mf{b}\in \mf{A}$, such that for all distinct $\mf{b},\mf{b}'\in \mf{A}$, $S_{\mf{b}}\cup V(\mca{H}_{\mf{b}})$ is anticomplete to $S_{\mf{b}'}\cup V(\mca{H}_{\mf{b}'})$ in $G$.
    \end{enumerate}
\end{theorem}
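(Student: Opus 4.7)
My plan is to prove Theorem~\ref{thm:bundlethm} through a sequence of Ramsey-type reductions: first homogenizing each bundle internally, and then classifying the pairwise interactions between bundles. The $(c,o)$-perforated hypothesis serves as the final backstop when too many bundles interact densely.

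Step 1 (internal homogenization): For each bundle $\mf{b} \in \mf{T}$, every special vertex $v \in S_{\mf{b}}$ either has or lacks a neighbor on each path, so each path of $\mca{L}_{\mf{b}}$ falls into one of at most $2^{|S_{\mf{b}}|} \leq 2^b$ attachment patterns. Pigeonhole on $\mca{L}_{\mf{b}}$ yields a sub-polypath $\mca{L}'_{\mf{b}} \subseteq \mca{L}_{\mf{b}}$ of size at least $\theta/2^b$ on which every $v \in S_{\mf{b}}$ is adjacent either to \emph{all} remaining paths or to \emph{none}. Since paths are disjoint and $|S_{\mf{b}}| \leq b$, at most $b$ paths contain a special vertex; discard them, so $S_{\mf{b}} \cap V(\mca{L}'_{\mf{b}}) = \emptyset$. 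Call a special vertex \emph{active} if it is adjacent to every remaining path, and denote this set $S^A_{\mf{b}}$.

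Step 2 (constellation from a single bundle): If some $\mf{b}$ has $|S^A_{\mf{b}}| \geq \rho(s, 2, 2)$, apply Theorem~\ref{multiramsey} to the active specials: we either find a clique of size $t$, giving outcome (a), or a stable set of $s$ active vertices, which together with any $l$ paths of $\mca{L}'_{\mf{b}}$ yields a plain $(s,l)$-constellation, giving outcome (b).

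Step 3 (inter-bundle coarse classification): Otherwise every $|S^A_{\mf{b}}|$ is bounded by some constant $\beta$. For each pair $\{\mf{b}, \mf{b}'\} \in \binom{\mf{T}}{2}$, classify the pair by specifying which of the four products $S_{\mf{b}} \times S_{\mf{b}'}$, $S_{\mf{b}} \times V(\mca{L}'_{\mf{b}'})$, $V(\mca{L}'_{\mf{b}}) \times S_{\mf{b}'}$, $V(\mca{L}'_{\mf{b}}) \times V(\mca{L}'_{\mf{b}'})$ contains at least one edge of $G$. This is a $2^4$-coloring; by Theorem~\ref{multiramsey} with $q = 2$, we extract a large subset $\mf{T}^\star \subseteq \mf{T}$ of bundles whose pairwise interactions all have the same coloring.

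Step 4 (terminal dichotomy): If the uniform pattern has no edges at all, then any $a$ bundles of $\mf{T}^\star$ with $h$ paths each from $\mca{L}'_{\mf{b}}$ yield outcome (c). Otherwise, at least one of the four products contains an edge in every pair of bundles in $\mf{T}^\star$. In the cases involving the $S$-sets, the bounded size of $S_{\mf{b}}$ permits further pigeonholes to locate a specific special vertex (or bounded collection of them) with the property of being adjacent to many other bundles' paths or $S$-sets, from which a stable set of size $s$ and an appropriate collection of anticomplete paths can be assembled (a plain constellation, outcome (b)), invoking Theorem~\ref{dvorak} if the auxiliary structures threaten to contain $K_t$ or $K_{t,t}$. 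In the purely path-path case, each pair of bundles contributes, via the single interaction edge combined with path segments inside each bundle (possibly using an active special vertex as a connector), a cycle of length at least $o + 2$; since the bundles of $\mf{T}$ are pairwise disentangled and plain, cycles arising from disjoint pairs of bundles are pairwise vertex-disjoint and anticomplete, and once $\Theta$ is large enough relative to $c$, we obtain $c$ such cycles, contradicting the $(c,o)$-perforated hypothesis.

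The main obstacle will be Step 4 in the path-path case: producing long enough pairwise anticomplete cycles from just one inter-bundle edge per pair. This requires further applications of Theorem~\ref{productramsey} to pin down specific paths and vertices contributing the edges, and the argument must simultaneously maintain cycle length at least $o + 2$ and pairwise anticompleteness across independently chosen pairs — conditions which jointly make this the most delicate step of the proof.
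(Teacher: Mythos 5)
Your Steps 1--3 are broadly reasonable: the internal homogenization of each bundle is a sensible preprocessing step, and classifying pairwise interactions by which of the four $S/\mca{L}$ products contain edges and then Ramsey-ing is in the same spirit as the paper's Lemma~\ref{lem:bundle} (though the paper works with finer, vertex-labelled $3$-partite patterns to retain information about \emph{which} special vertices and paths are involved).

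The real gap is in Step 4, the ``purely path-path'' case, and it is not merely a matter of delicacy to be resolved by more Ramsey. After your homogenization, every pair of bundles in $\mf{T}^\star$ has at least one edge between their polypaths. So if you take $c$ disjoint pairs $(\mf{b}_1,\mf{b}_2),\dots,(\mf{b}_{2c-1},\mf{b}_{2c})$ and build a cycle $H_i$ inside the $i$-th pair, the cycles are vertex-disjoint (disentangled) but they are \emph{not} anticomplete --- precisely because $\mf{b}_1$ and $\mf{b}_3$ (say) also have a polypath edge between them, which may join $H_1$ to $H_2$. ``Disentangled'' means disjoint vertex sets, not anticomplete, and the case you are in is exactly the one where anticompleteness fails universally. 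There is also a secondary problem: a single interaction edge between $L \in \mca{L}'_{\mf{b}}$ and $L' \in \mca{L}'_{\mf{b}'}$ together with segments of $L$ and $L'$ gives a tree, not a cycle; to close it you need a second edge between the two bundles, or a detour through yet another bundle, and neither is supplied by your coloring.

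The paper sidesteps this entirely by finding all $c$ cycles \emph{within the interaction of a single pair} of bundles. After invoking Lemma~\ref{lem:polypathvspolypath} to obtain two bundles $\mf{b},\mf{b}'$ with polypaths in complete ``pairwise-edge'' position, the paper uses a counting argument on ``rigid'' paths to isolate two paths $L_1,L_2 \in \mca{G}_{\mf{b}}$ in which no vertex sees many paths of $\mca{G}_{\mf{b}'}$, and then threads many paths $Q_j \subseteq L'_j \in \mca{G}_{\mf{b}'}$ from $L_1$ to $L_2$. This builds a ``gemini'' --- a pair of meager asterisms on $L_1$ and $L_2$ linked by the $Q_j$'s --- and Lemma~\ref{lem:gemini_to_constellation} (via the interval-graph / syzygy dichotomy of Lemma~\ref{lem:ast_to_syzygy}) then produces either a plain $(s,l)$-constellation or $c$ pairwise disjoint anticomplete long cycles inside this one two-bundle gadget. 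You would need an analogue of the rigid-path / gemini / syzygy machinery; without it, the $(c,o)$-perforated hypothesis cannot be brought to bear in the path-path case, because you have no way to guarantee anticompleteness among the cycles you construct.
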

Our way towards the proof of Theorem~\ref{thm:bundlethm} passes through an assortment of definitions and lemmas, often of independent interest and even with applications in future sections (and papers). We begin with two lemmas which capture our main use of Theorems~\ref{multiramsey} and \ref{productramsey}. 

The first lemma is quite similar to Theorem~\ref{thm:bundlethm}, but still not as strong: the constellation in the second outcome is not plain, and the third outcome is not able to rule out the edges between the ``$\mca{L}$-sides'' of the bundles:

\begin{lemma}\label{lem:bundle}
   \sloppy For all   $b,f,g,m,n,t\in \poi$ and $s \in \poi \cup \{0\}$, there is a constant $\beta=\beta(b,f,g,m,n,s,t)\in \poi$ with the following property. Let $G$ be a graph and let $\mf{B}$ be a collection of $\beta$ pairwise disentangled $(\leq b,2b(g-1)+f)$-bundles in $G$. Then one of the following holds.
    \begin{enumerate}[\rm (a)]
        \item\label{lem:bundle_a} $G$ contains either $K_t$ or $K_{t,t}$.
        \item\label{lem:bundle_b} There exist  $\mf{N}\subseteq \mf{B}$ with $|\mf{N}|=n$ as well as $S\subseteq \bigcup_{\mf{b}\in \mf{B}\setminus \mf{N}} S_{\mf{b}}$ with $|S|=s$, such that for every $\mf{b}\in \mf{N}$, there exists $\mca{G}_{\mf{b}}\subseteq \mca{L}_{\mf{b}}$ with $|\mca{G}_{\mf{b}}|=g$ for which $(S, \mca{G}_{\mf{b}})$ is an $(s,g)$-constellation in $G$. As a result, $(S,\bigcup_{\mf{b}\in \mf{N}}\mca{G}_{\mf{b}})$ is an $(s,gn)$-constellation in $G$. 
        \item\label{lem:bundle_c} There exist  $\mf{M}\subseteq \mf{B}$ with $|\mf{M}|=m$ and $\mca{F}_{\mf{b}}\subseteq \mca{L}_{\mf{b}}$ with $|\mca{F}_{\mf{b}}|=f$ for each $\mf{b}\in \mf{M}$, such that for all distinct $\mf{b},\mf{b}'\in \mf{M}$, $S_{\mf{b}}$ is anticomplete to $S_{\mf{b}'}\cup V(\mca{F}_{\mf{b}'})$ in $G$.
    \end{enumerate}
\end{lemma}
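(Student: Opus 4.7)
The plan is to apply Theorem~\ref{multiramsey} to $2$-subsets of $\mf{B}$ with a sufficiently rich (but still bounded) colouring, produce a large sub-collection on which every pair of bundles interacts identically, and then run a short case analysis on the common ``interaction pattern.'' To set up, by pigeonhole on $|S_{\mf{b}}| \in \{0,1,\dots,b\}$ I first pass to a sub-collection in which every bundle has $|S_{\mf{b}}|=b'$ for a fixed $b' \leq b$; I label $S_{\mf{b}} = \{x_1^{\mf{b}},\dots,x_{b'}^{\mf{b}}\}$ and $\mca{L}_{\mf{b}} = \{L_1^{\mf{b}},\dots,L_\ell^{\mf{b}}\}$ with $\ell = 2b(g-1)+f$, and fix a linear order on the sub-collection. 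For every $2$-subset $\{\mf{b},\mf{b}'\}$ with $\mf{b}$ preceding $\mf{b}'$, the colour records the bipartite adjacency pattern between $S_{\mf{b}}$ and $S_{\mf{b}'}$ as a subset of $[b']\times[b']$, plus, for each $i \in [b']$, the ``forward'' set $F_i \subseteq [\ell]$ of indices $j$ for which $x_i^{\mf{b}}$ has a neighbour in $L_j^{\mf{b}'}$, and the analogous ``backward'' set $B_i$. The total number of colours is bounded in terms of $b,f,g$, so Theorem~\ref{multiramsey} yields a uniform sub-collection $\mf{B}'$ of any prescribed size $N$.

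The first case is when the common bipartite pattern is non-empty. If it contains a diagonal edge $(a,a)$, then $\{x_a^{\mf{b}} : \mf{b} \in \mf{B}'\}$ is a clique of size $N$ in $G$, giving $K_t$ (outcome \ref{lem:bundle_a}) once $N \geq t$. Otherwise pick any off-diagonal edge $(i,j)$; the $i$-labelled vertices of the first half of $\mf{B}'$ and the $j$-labelled vertices of the second half are both stable (no diagonal edges in the pattern), disjoint (since $i \neq j$), and completely joined across the split (every smaller-bundle $i$-vertex precedes every larger-bundle $j$-vertex, activating the $(i,j)$ edge), producing an induced $K_{N/2,N/2}$ and hence \ref{lem:bundle_a} once $N \geq 2t$.

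So I may assume the bipartite pattern is empty, i.e.\ all $S$-sides in $\mf{B}'$ are pairwise anticomplete. If some $i^* \in [b']$ satisfies $|F_{i^*}| \geq g$, fix a $g$-subset $I^* \subseteq F_{i^*}$, enumerate $\mf{B}' = \{\mf{b}_1 < \cdots < \mf{b}_N\}$, and take $S := \{x_{i^*}^{\mf{b}_1},\dots,x_{i^*}^{\mf{b}_s}\}$, $\mf{N} := \{\mf{b}_{s+1},\dots,\mf{b}_{s+n}\}$, and $\mca{G}_{\mf{b}'} := \{L_j^{\mf{b}'} : j \in I^*\}$ for each $\mf{b}' \in \mf{N}$; forward-pattern uniformity makes each $(S,\mca{G}_{\mf{b}'})$ into an $(s,g)$-constellation, giving outcome \ref{lem:bundle_b} (the symmetric case $|B_{i^*}| \geq g$ is handled by reversing the roles of sources and targets). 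In the remaining case, $|F_i|,|B_i| \leq g-1$ for every $i \in [b']$, and then for any $m$ bundles chosen as $\mf{M}$, the paths of $\mca{L}_{\mf{b}_2}$ hit by a vertex of $\bigcup_{\mf{b} \in \mf{M}\setminus\{\mf{b}_2\}} S_{\mf{b}}$ have indices contained in $\bigcup_i F_i \cup \bigcup_i B_i$, of total size at most $2b'(g-1) \leq 2b(g-1)$; the remaining $\geq f$ indices supply $\mca{F}_{\mf{b}_2}$, establishing \ref{lem:bundle_c}.

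The main obstacle is designing the Ramsey colour so that it is simultaneously rich enough to drive all three outcomes --- distinguishing the $S$-$S$ bipartite structure and, independently, both directed $S$-to-$\mca{L}$ neighbourhood patterns --- yet bounded by a constant depending only on $b,f,g$ so that Theorem~\ref{multiramsey} applies with a fixed palette. The factor of $2$ in $\ell = 2b(g-1)+f$ is precisely what is needed at the end to absorb both the forward and the backward ``low-coverage'' contributions when assembling \ref{lem:bundle_c}; dropping it would leave only enough margin against one directional contribution and force the counting to collapse.
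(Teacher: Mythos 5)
Your proposal is correct and follows essentially the same route as the paper: pigeonhole on $|S_{\mf{b}}|$, a Ramsey argument (Theorem~\ref{multiramsey}) on $2$-subsets of bundles using a bounded colour encoding the $S$-$S$ adjacency pattern together with the directed $S$-to-$\mca{L}$ incidence patterns, and then the same three-way case analysis. The only cosmetic difference is that the paper packages the colour as a single $3$-partite graph $W_{i,j}$ on $X \cup X' \cup \mca{L}$ and uses a double-counting argument to find a vertex of $X\cup X'$ with $\geq g$ neighbours in $\mca{L}$, whereas you phrase the same information as a triple (bipartite pattern, forward sets $F_i$, backward sets $B_i$) and split directly on whether some $|F_i|$ or $|B_i|$ reaches $g$; these are equivalent.
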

\begin{proof}
Let $l=2b(g-1)+f$ and let $\rho=\rho(\max\{m,n+s,2t\},2,2^{b^2+2bl})$
 be as in Theorem~\ref{multiramsey}. Let
$$\beta=\beta(b,f,g,m,n,s,t)=(b+1)\rho.$$
Suppose that Lemma~\ref{lem:bundle}\ref{lem:bundle_a} and Lemma~\ref{lem:bundle}\ref{lem:bundle_b} do not hold. From the choice of $\beta$, it follows that there exists $c\in \{0,1,\ldots, b\}$ as well as $\mf{C}\subseteq \mf{B}$ with $|\mf{C}|=\rho$ such that for every $\mf{b}\in \mf{C}$, we have $|S_{\mf{b}}|=c$. Consider the following three pairwise disjoint sets: 
$$X=\{x_i:i\in [c]\};$$
$$X'=\{x'_i:i\in [c]\};$$
$$\mca{L}=\{L_1,\ldots, L_l\}.$$
Let $\mca{W}$ be the set of all vertex-labelled 3-partite graphs with vertex set $X\cup X'\cup \mca{L}$ and $3$-partition $(X, X', \mca{L})$. So we have $|\mca{W}|=2^{c^2+2cl}\leq 2^{b^2+2bl}$.

Next, let us write $\mf{C}=\{\mf{b}_1,\ldots, \mf{b}_{\rho}\}$, and for every $i\in [\rho]$, let $S_{\mf{b}_i}=\{x_1^i,\ldots, x_{c}^i\}$ and
$\mca{L}_{\mf{b}_i}=\{L_1^i,\ldots, L_l^i\}$. For every $2$-subset $
\{i,j\}$ of $[\rho]$ with $i<j$, let $W_{i,j}$ be the unique graph in $\mca{W}$ with the following specifications.
\begin{enumerate}[(W1), leftmargin=19mm, rightmargin=7mm]
    \item\label{W1} For all $p,q\in [c]$, we have $x_px'_q\in  E(W_{i,j})$ if and only if we have $ x_p^ix^j_q\in E(G)$.
    \item\label{W2} For all $p\in [c]$ and $q\in [l]$, we have $x_pL_q\in E(W_{i,j})$ if and only if $x_p^i$ has a neighbor in $L^{j}_q$ in $G$.
   \item\label{W3} For all $p\in [c]$ and $q\in [l]$, we have $x'_pL_q\in E(W_{i,j})$ if and only if $x_p^j$ has a neighbor in $L^i_q$ in $G$.
\end{enumerate}
Now, let $\Phi:\binom{[\rho]}{2}\rightarrow \mca{W}$ be the map with $\Phi(\{i,j\})=W_{i,j}$ for every $2$-subset $
\{i,j\}$ of $[\rho]$ with $i<j$. It follows that $\Phi$ is well-defined. The choice of $\rho$ then allows an application of Theorem~\ref{multiramsey}, which implies that there exists $I\subseteq [\rho]$ with $|I|=\max\{m,n+s,2t\}$ as well as $W\in \mca{W}$ such that for every $2$-subset $
\{i,j\}$ of $I$ with $i<j$, we have $\Phi(\{i,j\})=W_{i,j}=W$. In particular, one may pick $I_1,J_1,I_2,J_2, I_3,J_3,M\subseteq I$ such that 
\begin{itemize}
    \item $|I_1|=|J_1|=t$ and $\max I_1< \min J_1$;
    \item $|I_2|=s$, $|J_2|=n$ and $\max I_2< \min J_2$;
    \item $|I_3|=s$, $|J_3|=n$ and $\max J_3< \min I_3$; and
    \item $|M|=m$.
\end{itemize}

We claim that:

\sta{\label{st:Xanticomplete} The sets $\{S_{\mf{b}_i}:i\in I\}$ are pairwise anticomplete in $G$.}

In view of \ref{W1}, it suffices to show that $X$ and $X'$ are anticomplete in $W$. Note that if $x_px'_p\in E(W)$ for some $p\in [c]$, then by \ref{W1}, $G[\{x_p^i:i\in I_1\}]$ is isomorphic to $K_t$ and so Lemma~\ref{lem:bundle}\ref{lem:bundle_a} holds, a contradiction. It follows that for every $p\in [c]$, the vertices $x_p$ and $x'_p$ are non-adjacent in $W$. This, along with \ref{W1} and the fact that $ W_{i,j}=W$ for all $i,j\in I$ with $i<j$, implies that $\{x_p^i:i\in I\}$ is a stable set in $G$. Now, assume that $x_px'_q\in E(W)$ for some $p,q\in [c]$. It follows that $p$ and $q$ are distinct and $\{x_p^i:i\in I_1\}$ and $\{x_q^j:j\in J_1\}$ are both stable sets in $G$. But then by \ref{W1},  $G[\{x_p^i:i\in I_1\}\cup \{x_q^j:i\in J_1\}]$
is isomorphic to $K_{t,t}$, and so Lemma~\ref{lem:bundle}\ref{lem:bundle_a} holds, again a contradiction. This proves \eqref{st:Xanticomplete}.

\sta{\label{st:findM} There is an $f$-subset $F$ of $[l]$ such that $X\cup X'$ is anticomplete to $\{L_{q}:q\in F
\}$ in $W$.}

Suppose not. Then, in the graph $W$, there are fewer than $f$ vertices in $\mca{L}$ with no neighbor in $X\cup X'$, and so the number of edges in the graph $W$ with an end in $X\cup X'$ and an end in $\mca{L}$ is greater than $|\mca{L}|-f$. On the other hand, recall that $|X|=|X'|=c$ and
$$|\mca{L}|=l=2b(g-1)+f\geq 2c(g-1)+f=(g-1)|X\cup X'|+f.$$
It follows that the number of edges in the graph $W$ with an end in $X\cup X'$ and an end in $\mca{L}$ is greater than $(g-1)|X\cup X'|$. Therefore, there exists a vertex in $X\cup X'$ which has at least $g$ neighbors in $\mca{L}$ (in $W$). In other words, there exists $p\in [c]$ and $F'\subseteq [l]$ with $|F'|=g$ such that one of $x_p$ and $x'_p$ is complete to $\{L_{q}:q\in F'\}$ in $W$. This, along with \ref{W2} and \ref{W3}, implies that there exists $k\in \{2,3\}$ such that for every $i\in I_k$, every $j\in J_k$ and every $q\in F'$, $x_p^i\in S_{\mf{b}_i}$ has a neighbor in $L_q^j\in \mca{L}_{\mf{b}_j}$. Let $S=\{x^i_p:i\in I_k\}$. Let $\mf{N}=\{\mf{b}_j:j\in J_k\}$; so we have $|\mf{N}|=n$.  For every $j\in J_k$, let $\mca{G}_{\mf{b}_j}=\{L_q^j:q\in F'\}\subseteq \mca{L}_{\mf{b}_j}$; so $\mca{G}_{\mf{b}_j}$ is a $g$-polypath in $G$. It follows that $S\subseteq \bigcup_{\mf{b}\in \mf{B}\setminus \mf{N}} S_{\mf{b}}$, and by \eqref{st:Xanticomplete},  $S$ is a stable set in $G\setminus (\bigcup_{\mf{b}\in \mf{N}}V(\mca{G}_{\mf{b}}))$. Also, by construction (and by the choice of $k$, in particular), for every $\mf{b}\in \mf{N}$ and every path $L\in \mca{G}_{\mf{b}}$, every vertex in $S$ has a neighbor in $L$ in $G$. Thus, $(S, \mca{G}_{\mf{b}})$ is an $(s,g)$-constellation in $G$. But now $\mf{N}, S$ and $\{\mca{G}_{\mf{b}}:\mf{b}\in \mf{N}\}$ satisfy \ref{lem:bundle_b}, a contradiction. This proves \eqref{st:findM}.
\medskip

We now finish the proof. Let $\mf{M}=\{\mf{b}_j:j\in M\}$; then $|\mf{M}|=m$. Let $F$ be as in \eqref{st:findM}, and for every $j\in M$, define the $f$-polypath $\mca{F}_{\mf{b}_j}=\{L_q^j:q\in F\}\subseteq \mca{L}_{\mf{b}_j}$. For all distinct $i,j\in M$, it follows from \eqref{st:Xanticomplete} that $S_{\mf{b}_i}$ is anticomplete to $S_{\mf{b}_j}$ in $G$. It also follows from \eqref{st:findM} and the definition of the graph $W=W_{i,j}$ (specifically from \ref{W2} and \ref{W3}) that $S_{\mf{b}_i}$ is anticomplete to $V(\mca{F}_{\mf{b}_j})$ in $G$. In conclusion, we have shown that for all distinct $i,j\in M$, $S_{\mf{b}_i}$ is anticomplete to $S_{\mf{b}_j}\cup V(\mca{F}_{\mf{b}_j})$. Hence, $\mf{M}$ and $\{\mca{F}_{\mf{b}}:\mf{b}\in \mf{M}\}$ satisfy \ref{lem:bundle_c}.  This completes the proof of Lemma~\ref{lem:bundle}.
\end{proof}

The second lemma will help eradicate the edges between paths, further pushing \ref{lem:bundle}\ref{lem:bundle_b} and \ref{lem:bundle}\ref{lem:bundle_c} towards \ref{thm:bundlethm}\ref{thm:bundlethm_b}  and \ref{thm:bundlethm}\ref{thm:bundlethm_c}, respectively.

\begin{lemma}\label{lem:polypathvspolypath}
    For all   $a,g\in \poi$, there is a constant $\varphi=\varphi(a,g)\in \poi$ with the following property. Let $G$ be a graph and let $\mca{F}_1,\ldots, \mca{F}_a$ be a collection of $a$ pairwise disentangled $\varphi$-polypaths in $G$. Then for every $i\in [a]$,  there exists a $g$-polypath $\mca{G}_{i}\subseteq \mca{F}_i$, such that for all distinct $i,i'\in [a]$, either $V(\mca{G}_i)$ is anticomplete to $V(\mca{G}_{i'})$ in $G$, or for every $L\in \mca{G}_i$ and every $L'\in \mca{G}_{i'}$, there is an edge in $G$ with an end in $L$ and an end in $L'$.
\end{lemma}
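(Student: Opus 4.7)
The plan is to apply the product Ramsey theorem (Theorem~\ref{productramsey}) in a single shot with $n=a$, by encoding for each $a$-tuple of paths the full ``adjacency pattern'' across all pairs. Specifically, I would set
\[
\varphi = \varphi(a,g) = \nu\!\left(a,\, g,\, 2^{\binom{a}{2}}\right),
\]
where $\nu$ is the function from Theorem~\ref{productramsey}, and let the set $W$ of colors be $\{0,1\}^{\binom{[a]}{2}}$, so $|W| = 2^{\binom{a}{2}}$.

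Next, I would define a map $\Phi : \mca{F}_1 \times \cdots \times \mca{F}_a \to W$ as follows. Since the polypaths $\mca{F}_1, \ldots, \mca{F}_a$ are pairwise disentangled, for distinct $i, i' \in [a]$ and any $L \in \mca{F}_i$, $L' \in \mca{F}_{i'}$, the paths $L$ and $L'$ are vertex-disjoint, so it makes sense to ask whether $G$ contains an edge with one end in $V(L)$ and the other in $V(L')$. For each tuple $(L_1, \ldots, L_a) \in \mca{F}_1 \times \cdots \times \mca{F}_a$ and each $2$-subset $\{i,i'\}$ of $[a]$, set the $\{i,i'\}$-coordinate of $\Phi(L_1, \ldots, L_a)$ to be $1$ if there is an edge of $G$ between $V(L_i)$ and $V(L_{i'})$, and $0$ otherwise. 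This is well-defined.

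Now I would invoke Theorem~\ref{productramsey}: there exist $g$-subsets $\mca{G}_i \subseteq \mca{F}_i$ for each $i \in [a]$ such that $\Phi$ is constant on $\mca{G}_1 \times \cdots \times \mca{G}_a$, taking some value $w \in W$. The key observation is that the $\{i, i'\}$-coordinate of $\Phi(L_1, \ldots, L_a)$ depends only on the two coordinates $L_i$ and $L_{i'}$. Hence, given any $L \in \mca{G}_i$ and $L' \in \mca{G}_{i'}$, we can extend $(L, L')$ to a full tuple in $\mca{G}_1 \times \cdots \times \mca{G}_a$ by choosing the remaining coordinates arbitrarily, and read off the $\{i,i'\}$-coordinate of $w$. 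If this coordinate is $0$, then no edge of $G$ joins $V(L)$ and $V(L')$ for any $L \in \mca{G}_i$, $L' \in \mca{G}_{i'}$, so $V(\mca{G}_i)$ is anticomplete to $V(\mca{G}_{i'})$; if it is $1$, then every such pair $(L, L')$ spans at least one edge, which is the other alternative demanded by the lemma.

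This argument is essentially just bookkeeping on top of product Ramsey, and I do not anticipate a substantive obstacle. The only point deserving care is the well-definedness of $\Phi$, which relies crucially on the disentangled hypothesis, and the observation that constancy of $\Phi$ on the full product implies pairwise constancy on each two-coordinate projection, which is immediate from the fact that each color coordinate depends only on two of the $a$ arguments.
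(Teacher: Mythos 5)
Your proposal is correct and matches the paper's proof almost verbatim: the paper also applies Theorem~\ref{productramsey} once to the product $\mca{F}_1\times\cdots\times\mca{F}_a$, coloring each tuple by the graph on $[a]$ recording which pairs have an edge between them, and reads off the two outcomes from the constancy of the color. The only cosmetic difference is that the paper bounds the number of colors by $2^{a^2}$ where you use the tighter $2^{\binom{a}{2}}$; both are fine.
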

\begin{proof}
    Define $\varphi=\varphi(a,g)=\nu(a,g,2^{a^2})$,
    where $\nu(\cdot,\cdot,\cdot)$ is as in Theorem~\ref{productramsey}. Consider a set $X=\{x_1,\ldots, x_a\}$, and let $\mca{W}$ be the set of all vertex-labeled graphs with vertex set $X$; so we have $|\mca{W}|\leq 2^{a^2}$. For every $\mathsf{F}=(L_1,\ldots, L_a)\in \mca{F}_1\times \cdots\times \mca{F}_a$, define $\Phi(\mathsf{F})$ to be the unique graph in $\mca{W}$ in which for all distinct $i,i'\in [a]$, we have $x_ix_{i'}\in E(\Phi(\mathsf{F}))$ if and only if there is an edge in $G$ with an end in $L_i$ and an end in $L_{i'}$. Then $\Phi:\mca{F}_1\times \cdots\times \mca{F}_a\rightarrow \mca{W}$ is a well-defined map. By the choice of $\varphi$, we can apply Theorem~\ref{productramsey}, and deduced that there exists $W\in \mca{W}$ as well as  $\mca{G}_i\subseteq \mca{F}_i$ with $|\mca{G}_i|=g\in \poi$ for each $i\in [a]$, such that for every $\mathsf{F}\in \mca{G}_1\times \cdots\times \mca{G}_a$, we have $\Phi(\mathsf{F})=W$.
    
    Now, let $i,i'\in [a]$ be distinct. It follows immediately that, if $x_ix_{i'}\notin E(W)$, then $V(\mca{G}_i)$ is anticomplete to $V(\mca{G}_{i'})$ in $G$, and if $x_ix_{i'}\in E(W)$, then for every $L\in \mca{G}_i$ and every $L'\in \mca{G}_{i'}$, there is an edge in $G$ with an end in $L$ and an end in $L'$. This completes the proof of Lemma~\ref{lem:polypathvspolypath}.
\end{proof}

The next lemma will be of extensive use in the remainder of this paper (and several forthcoming ones). Nevertheless, it is indeed based on a rather intuitive idea, which we briefly discuss here. Suppose $\mf{a}$ is an asterism with $S_{\mf{a}}$ sufficiently large, while every vertex in $L_{\mf{a}}$ is adjacent to only a few vertices in $S_{\mf{a}}$ (the latter is what we will refer to as ``meagerness''). Then, for each vertex in $x\in S_{\mf{a}}$, we consider the shortest path in  $L_{\mf{a}}$ which contains all neighbors of $x$ in $L_{\mf{a}}$. The key observation is that the intersection graph of these paths is an interval graph. If this interval graph contains a large stable set, then we obtain a large ``sub-asterism'' of $\mf{a}$ in which the neighbors of the vertices in the stable set appear on the path in order. We call the latter a ``syzygy,'' and syzygies (at least when ``glued together'') are useful for finding several pairwise disjoint and anticomplete long induced cycles (this will become clearer once we get to the fourth lemma in this section). Back to the interval graph, we may assume that it contains a large clique. Consequently, there is a vertex $u\in L_{\mf{a}}$ such that several vertices in $S_{\mf{a}}$ have neighbors in $L_{\mf{a}}$ on either side of $u$. But recall that $u$ itself has only a few neighbors in $S_{\mf{a}}$. So we obtain two disjoint and anticomplete paths in $L_{\mf{a}}$ such that several vertices in $S_{\mf{a}}$ have neighbors in both of them. This is the seed out of which we will grow a plain constellation, and as mentioned before, a plain constellation is exactly where we should be looking for a full occultation.

The exact definitions are as follows. Given a graph $G$ and $a\in \poi$, an \textit{$a$-syzygy} in $G$ is an ordered $a$-asterism $\mf{s}$ in $G$ such that for some end $u$ of $L_{\mf{s}}$, the following holds.
 \begin{enumerate}[(SY), leftmargin=19mm, rightmargin=7mm]
     \item\label{SY} For all $i,j\in [a]$ with $i<j$, every neighbor $v_i\in L_{\mf{s}}$ of $\pi_{\mf{s}}(i)$ and every neighbor $v_j\in L_{\mf{s}}$ of $\pi_{\mf{s}}(j)$, the path in $L_{\mf{s}}$ from $u$ to $v_j$ contains $v_i$ in its interior. In other words, traversing $L_{\mf{s}}$ starting at $u$, all neighbors of $\pi(i)$ appear ``before'' all neighbors of $\pi(j)$.
 \end{enumerate}
 \begin{figure}
\centering
\includegraphics[scale=0.7]{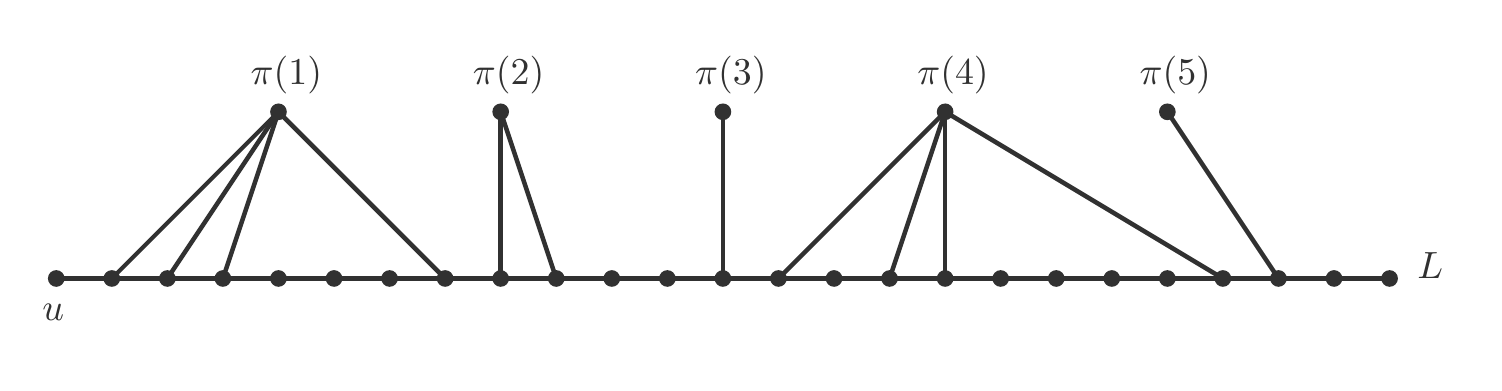}
\caption{A $5$-syzygy.}
\label{fig:syzygy}
\end{figure}
See Figure~\ref{fig:syzygy}. In particular, if $\mf{s}$ is an $a$-syzygy in $G$, then  $\mf{s}$ is ample, and for every non-empty subset $X$ of $S_\mf{s}$, $\mf{s}|X$ is a $|X|$-syzygy in $G$.

Recall that $d$-ample asterisms were introduced in Section~\ref{sec:asterism} as an extension of ample asterisms. Here is another notion extending ample asterisms, but the other way around. For $d \in \poi \cup \{0\}$ and an (ordered) asterism $\mf{a}$ in a graph $G$, we say $\mf{a}$ is \textit{$d$-meager} if every vertex in $L_{\mf{a}}$ has at most $d$ neighbors in $S_{\mf{a}}$. It follows that $\mf{a}$ is $0$-meager if and only if $S_{\mf{a}}=\emptyset$ and $1$-meager if and only if $\mf{a}$ is ample.

We also use the following folklore fact which is a direct consequence of interval graphs being perfect (we omit further details).
 
\begin{lemma}[Berge \cite{berge}]\label{lem:interval}
Let $a,b \in \poi \cup \{0\}$ and let $G$ be an interval graph on $ab$ vertices. Then $G$ contains either a stable set of cardinality $a$ or a clique of cardinality $b$. 
\end{lemma}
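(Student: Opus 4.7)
The plan is to reduce the statement to the perfection of interval graphs, which is the classical result being cited (Berge), so I will not re-prove that ingredient. Once perfection is in hand, the argument is a clean one-line pigeonhole.

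Concretely, I would dispense first with the degenerate cases $a=0$ or $b=0$: then $|V(G)|=ab=0$, and the empty set is simultaneously a stable set of cardinality $0$ and a clique of cardinality $0$, so the conclusion is vacuous. Assuming $a,b\ge 1$, suppose $G$ contains no clique of cardinality $b$, i.e.\ $\omega(G)\le b-1$. Since $G$ is an interval graph, it is perfect, and hence $\chi(G)=\omega(G)\le b-1$. Fix a proper colouring of $G$ with at most $b-1$ colour classes; each class is a stable set in $G$. Because $|V(G)|=ab$, by the pigeonhole principle some colour class has cardinality at least
\[
\left\lceil \frac{ab}{b-1} \right\rceil \;\ge\; a,
\]
yielding the desired stable set of cardinality $a$.

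I do not expect any genuine obstacle here: the only nontrivial input is the perfection of interval graphs, which the paper explicitly attributes (via the citation) to Berge's theory of perfect graphs, so it can be quoted as a black box. The only thing worth double-checking in the write-up is the boundary behaviour at $b=1$ (where $\omega(G)\le 0$ forces $V(G)=\emptyset$, hence $a=0$ again), but this is handled uniformly by the same pigeonhole inequality once one agrees that an empty graph admits an empty clique and an empty stable set.
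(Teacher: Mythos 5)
Your proof is correct and takes the same route the paper indicates (it states the lemma as a direct consequence of interval graphs being perfect and omits the details): perfection gives $\chi(G)=\omega(G)\le b-1$, and pigeonhole on the colour classes yields the stable set. Just note that for $b=1$ the displayed fraction $\frac{ab}{b-1}$ is undefined, so that boundary case should be settled directly (any vertex is a clique of cardinality $1$, and if $V(G)=\emptyset$ then $a=0$), rather than "by the same pigeonhole inequality."
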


\begin{lemma}\label{lem:ast_to_syzygy}
     Let $a,l\in \poi$, $d,s \in \poi \cup \{0\}$ and let $G$ be a graph. Assume that there exists a $d$-meager (ordered) $(a^{l-1}(s+d(l-1)))$-asterism  $\mf{g}$ in $G$. Then one of the following holds.
      \begin{enumerate}[\rm (a)]
        \item\label{lem:ast_to_syzygy_a} There exists an $a$-syzygy $\mf{s}$ in $G$ with $S_{\mf{s}}\subseteq S_{\mf{g}}$ and $L_{\mf{s}}\subseteq L_{\mf{g}}$.
        \item\label{lem:ast_to_syzygy_b} There exists a plain $(s,l)$-constellation  $\mf{c}$ in $G$ such that $S_{\mf{c}}\subseteq S_{\mf{g}}$ and $L\subseteq L_{\mf{g}}^*$ for every $L\in \mca{L}_{\mf{c}}$.
      \end{enumerate}
\end{lemma}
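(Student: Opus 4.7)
The plan is to iterate Lemma~\ref{lem:interval} on a sequence of interval graphs. For each $x \in S_{\mf{g}}$, let $I_x$ be the shortest subpath of $L_{\mf{g}}$ containing every neighbor of $x$ in $L_{\mf{g}}$; since $\mf{g}$ is an asterism, $x$ has a neighbor in $L_{\mf{g}}^*$ and is anticomplete to $\partial L_{\mf{g}}$, so $I_x$ is a well-defined non-empty subpath of $L_{\mf{g}}^*$. These $I_x$'s form an interval representation. A stable set of size $a$ in the corresponding interval graph gives $a$ vertices whose neighbors in $L_{\mf{g}}$ lie in pairwise disjoint subpaths, which, ordered from one end of $L_{\mf{g}}$ to the other, immediately yield the desired $a$-syzygy. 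A clique of size $b$ gives, by Helly's property for subpaths of a path, a vertex $u \in L_{\mf{g}}^*$ lying in every $I_x$ of the clique; since $\mf{g}$ is $d$-meager, at most $d$ of those vertices are adjacent to $u$, so at least $b - d$ have intervals \emph{strictly} straddling $u$. Since $u$ is a cut vertex of the induced path $L_{\mf{g}}^*$, deleting it splits $L_{\mf{g}}^*$ into two subpaths $L', L''$ which are anticomplete in $G$, and each surviving vertex has neighbors in both.

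The iteration proceeds analogously. Setting $\Sigma_0 = S_{\mf{g}}$, at step $i$ I would apply Lemma~\ref{lem:interval} with stable-set parameter $a$ and clique parameter $|\Sigma_{i-1}|/a$ to the interval graph on $\Sigma_{i-1}$ restricted to one of the current subpaths. A stable set of $a$ intervals at any stage terminates the process with an $a$-syzygy as in \ref{lem:ast_to_syzygy_a}; otherwise we obtain a clique, a Helly-common point $u_i$, and a subset $\Sigma_i \subseteq \Sigma_{i-1}$ with $|\Sigma_i| \geq |\Sigma_{i-1}|/a - d$, together with one more anticomplete subpath. The telescoping recurrence $|\Sigma_i| \geq |\Sigma_{i-1}|/a - d$ gives $|\Sigma_{l-1}| \geq |S_{\mf{g}}|/a^{l-1} - d(l-1) = s$, which is precisely the point of the hypothesis $|S_{\mf{g}}| = a^{l-1}(s + d(l-1))$. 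After $l-1$ rounds we thus have $l$ pairwise anticomplete subpaths of $L_{\mf{g}}^*$ and at least $s$ vertices in $\Sigma_{l-1}$, each adjacent to every subpath; picking $s$ of them produces the plain $(s,l)$-constellation required in \ref{lem:ast_to_syzygy_b}, using that $S_{\mf{g}}$ is a stable set disjoint from $V(L_{\mf{g}})$.

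The main obstacle I expect is purely bookkeeping: checking, when a stable set $\{x_1, \ldots, x_a\}$ of intervals is found at an interior iteration, that an $L_{\mf{s}} \subseteq L_{\mf{g}}$ can be chosen whose endpoints are anticomplete to $S_{\mf{s}}$ and that \ref{SY} is satisfied. This is handled by padding: take $L_{\mf{s}}$ from the vertex immediately before the first interval to the vertex immediately after the last, measured inside $L_{\mf{g}}$. Each such padding vertex is either outside every interval in the current subpath (so not a neighbor of any $x_k$), a previously selected split point $u_j$ (excluded as a neighbor from $\Sigma_j \supseteq \{x_1, \ldots, x_a\}$ by construction), or an endpoint of $L_{\mf{g}}$ (excluded by the original asterism axiom). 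The resulting $L_{\mf{s}}$ then contains each $x_k$-interval in its interior and no other neighbors of $x_k$, which validates \ref{SY}.
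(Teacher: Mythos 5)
Your proof is correct and takes essentially the same approach as the paper: build the interval graph of the neighbor-spanning subpaths, apply Lemma~\ref{lem:interval}, use the Helly property in the clique case to find a split point, and use $d$-meagerness to lose only $d$ vertices per split. The only cosmetic difference is that you run this as an explicit iteration tracking a shrinking set $\Sigma_i$ and a growing family of anticomplete subpaths, whereas the paper packages the same idea as an induction on $l$, recursing on the sub-asterism $(B',L_1)$ and appending $L_2^*$ to the constellation returned by the recursive call; the numerics and the padding/end-vertex checks work out identically either way.
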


  \begin{proof}For fixed $a,d$ and $s$, we proceed by induction on $l$. Note that if $l=1$,  then $(S_{\mf{g}},\{L_{\mf{g}^*}\})$ is a plain $(s,1)$-constellation in $G$ satisfying Lemma~\ref{lem:ast_to_syzygy}\ref{lem:ast_to_syzygy_b}. Thus, we may assume that $l\geq 2$.

 Let $\partial L_{\mf{g}}=\{u_1,u_2\}$. For every vertex $x\in S_{\mf{g}}$, traversing $L_{\mf{g}}$ from $u_1$ to $u_2$, let $v_x$ and $w_x$ be the first and the last neighbor of $x$ in $L_{\mf{g}}$, and let $P_x$ be the unique path in $L_{\mf{g}}$ with ends $v_x,w_x$.
  
  Define $\mathsf{G}$ to be the graph with vertex set $S_{\mf{g}}$ such that for distinct $x,y\in S_{\mf{g}}$, we have $xy\in E(\mathsf{G})$ if and only if $P_x\cap P_y\neq\emptyset$. It is readily seen that $\mathsf{G}$ is an interval graph on $a^{l-1}(s+d(l-1))$ vertices. Thus, by Lemma~\ref{lem:interval}, $\mathsf{G}$ contains either a stable set $A$ of cardinality $a$ or a clique $B$ of cardinality $$a^{l-2}(s+d(l-1))=a^{l-2}(s+d(l-2))+da^{l-2}\geq a^{l-2}(s+d(l-2))+d.$$
  In the former case, we may write $A=\{x_1,\ldots, x_{a}\}$ such that, for all distinct $i,j\in [a]$, the path in $L_{\mf{g}}$ from $u_1$ to $v_{x_j}$ contains $v_{x_i}$ if and only if $i<j$. But then $\mf{s}=(A,L_{\mf{g}})$ is an ordered $a$-asterism in $G$ with $\pi_{\mf{a}}(i)=x_i$ for every $i\in [a]$, and $\mf{s}$ together with the end $u_1$ of $L_{\mf{s}}=L_{\mf{g}}$ satisfy \ref{SY}. In other words, $\mf{s}$ is an $a$-syzygy in $G$ with $S_{\mf{s}}\subseteq S_{\mf{g}}$ and $L_{\mf{s}}\subseteq L_{\mf{g}}$, and so \ref{lem:ast_to_syzygy}\ref{lem:ast_to_syzygy_a} holds.

   Now, assume that $\mathsf{G}$ contains a clique $B$ of cardinality at least $a^{l-2}(s+d(l-2))+d$. It follows that there exists a vertex $u\in L^*_{\mf{g}}$ such that for every $x\in B$, we have $u\in P_x$. Let $L_i=u_i\dd L_{\mf{g}}\dd u$ for $i\in \{1,2\}$. Since $\mf{g}$ is $d$-meager, it follows that $|N_{B}(u)|\leq d$, and so there exists $B'\subseteq B$ with $|B'|=a^{l-2}(s+d(l-2))$ such that for every $i\in \{1,2\}$, every vertex in $B'$ has a neighbor in $L_i^*$, and $B'$ is anticomplete to $\partial L_i$. It follows that, for every $i=\{1,2\}$, $\mf{g}_i=(B',L_i)$ is an $(a^{l-2}(s+d(l-2)))$-asterism in $G$ which is $d$-meager, as so is $\mf{g}$. From the induction hypothesis applied to $\mf{g}_1$, we deduce that either here exists an $a$-syzygy $\mf{s}$ in $G$ with $S_{\mf{s}}\subseteq S_{\mf{g}_1}=B'\subseteq S_{\mf{g}}$ and $L_{\mf{s}}\subseteq L_{\mf{g}_1}=L_1\subseteq L_{\mf{g}}$, or there exists a plain $(s,l-1)$-constellation $\mf{c}_1$ in $G$ such that $S_{\mf{c}_1}\subseteq S_{\mf{g}_1}=B'\subseteq S_{\mf{g}}$ and $L\subseteq L_{\mf{g}_1}^*=L_1^*\subseteq L_{\mf{g}}^*$ for every $L\in \mca{L}_{\mf{c}_1}$. In the former case, Lemma~\ref{lem:ast_to_syzygy}\ref{lem:ast_to_syzygy_a} holds, as required. In the latter case, $\mf{c}=(S_{\mf{c}_1}, \mca{L}_{\mf{c}_1}\cup \{L_2^*\})$ is a plain $(s,l)$-constellation in $G$ such that $S_{\mf{c}}=S_{\mf{c}_1}\subseteq S_{\mf{g}}$ and $L\subseteq L_1^*\cup L_2^*=L_{\mf{g}}^*$ for every $L\in \mca{L}_{\mf{c}}$, and so Lemma~\ref{lem:ast_to_syzygy}\ref{lem:ast_to_syzygy_b} holds. This completes the proof of Lemma~\ref{lem:ast_to_syzygy}.
  \end{proof}
  
We need one more lemma, an application of Lemma~\ref{lem:ast_to_syzygy}, which in turn calls for one more definition.
Let $G$ be a graph and let $g\in \poi$. A \textit{$g$-gemini in $G$} is a pair $(\mf{g}_1,\mf{g}_2)$ of ordered $g$-asterisms in $G$ with the following specifications.
  \begin{enumerate}[(G1), leftmargin=19mm, rightmargin=7mm]
  \item\label{G1} We have $V(\mf{g}_1)\cap V(\mf{g}_2)= S_{\mf{g}_1}\cap S_{\mf{g}_2}$.
  \item\label{G2} $V(\mf{g}_1)\setminus V(\mf{g}_2)$ is anticomplete to $V(\mf{g}_2)\setminus V(\mf{g}_1)$.
      \item\label{G3} There exists a plain $g$-polypath $\mca{Q}=\{Q_1,\ldots, Q_{g}\}$ in $G\setminus (L_{\mf{g}_1}\cup L_{\mf{g}_2})$, such that for every $i\in [g]$:
      \begin{itemize}
          \item we have $\partial Q_i=\{\pi_{\mf{g}_1}(i), \pi_{\mf{g}_2}(i)\}$; and
          \item $Q_i\setminus \{\pi_{\mf{g}_1}(i)\}$ is anticomplete to $L_{\mf{g}_2}$ and $Q_i\setminus \{\pi_{\mf{g}_2}(i)\}$ is anticomplete to $L_{\mf{g}_1}$.
      \end{itemize}
In particular, for every $x\in S_{\mf{g}_1}\cap S_{\mf{g}_2}$, we have $\pi_{\mf{g}_1}^{-1}(x)=\pi_{\mf{g}_2}^{-1}(x)$.
  \end{enumerate}
  \begin{figure}
\centering
\includegraphics[scale=0.9]{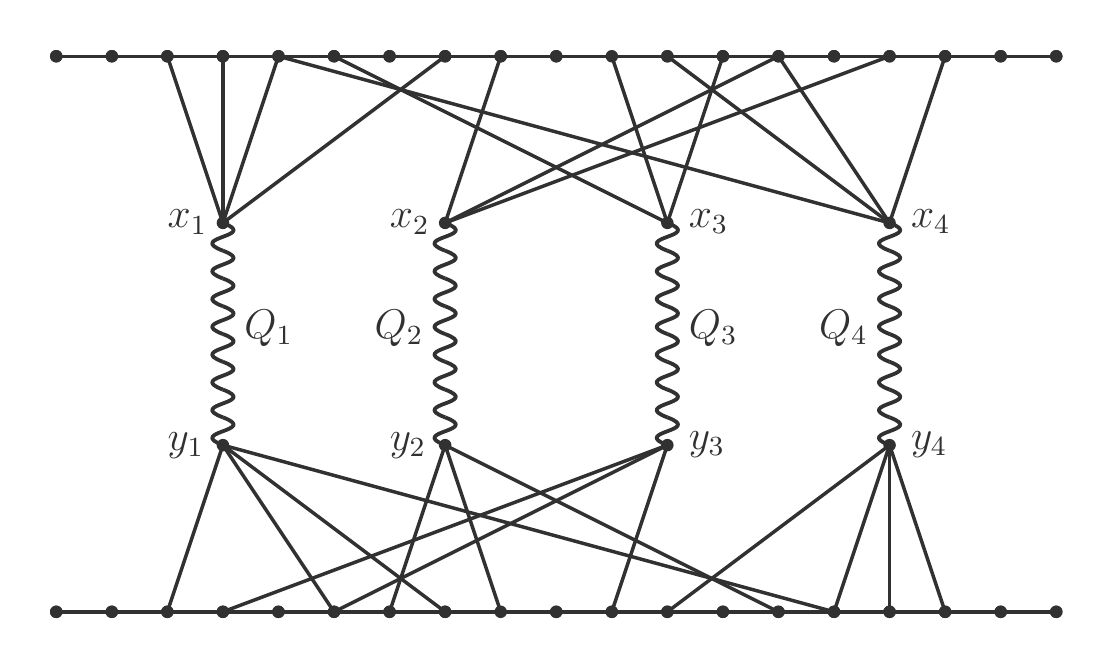}

\caption{A $4$-gemini $(\mf{g}_1,\mf{g}_2)$ with $\pi_{\mf{g}_1}(i)=x_i$ and $\pi_{\mf{g}_2}(i)=y_i$ for $i=1,2,3,4$. The paths $Q_1, Q_2, Q_3, Q_4$ are depicted in squiggly lines to mean they are of arbitrary (possibly zero) length.}
\label{fig:gemini}
\end{figure}

See Figure~\ref{fig:gemini}. It follows that if $(\mf{g}_1,\mf{g}_2)$ is a $g$-gemini in $G$, then for every non-empty subset $J$ of $[g]$, $(\mf{g}_1|\pi_{\mf{g}_1}(J),\mf{g}_2|\pi_{\mf{g}_2}(J))$ is a $|J|$-gemini in $G$.

The following shows that a perforated graph containing a large gemini must also contain a large plain constellation. 

\begin{lemma}\label{lem:gemini_to_constellation}
     For all   $c,l,o,s\in \poi$ and $d \in \poi \cup \{0\}$, there is a constant $\gamma=\gamma(c,d,l,o,s)\in \poi$ with the following property. Let $G$ be a $(c,o)$-perforated graph. Assume that there exists a $\gamma$-gemini $(\mf{g}_1,\mf{g}_2)$ in $G$ where both $\mf{g}_1$ and $\mf{g}_2$ are $d$-meager. Then there exists a plain $(s,l)$-constellation in $G$.
\end{lemma}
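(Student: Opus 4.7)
The plan is to apply Lemma~\ref{lem:ast_to_syzygy} successively to the two sides of the gemini; since the ``plain $(s,l)$-constellation'' outcome of that lemma is exactly the desired conclusion, we may assume it fails on both sides and thereby extract a ``double-syzygy'' structure on a common index set. We then combine this structure with the connecting paths $Q_i$ and the $(c,o)$-perforation hypothesis to derive a contradiction. The constant $\gamma$ is set by working backwards from the size required at the final step.

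First apply Lemma~\ref{lem:ast_to_syzygy} to the $d$-meager $\gamma$-asterism $\mf{g}_1$. If the second outcome fires we obtain the desired plain $(s,l)$-constellation; otherwise it yields an $a$-syzygy $\mf{s}_1$ with $S_{\mf{s}_1}\subseteq S_{\mf{g}_1}$ and $L_{\mf{s}_1}\subseteq L_{\mf{g}_1}$. Restrict the gemini to the index set $J_1=\pi_{\mf{g}_1}^{-1}(S_{\mf{s}_1})$, obtaining the $d$-meager $a$-asterism $\mf{g}_2|\pi_{\mf{g}_2}(J_1)$, and apply Lemma~\ref{lem:ast_to_syzygy} a second time; either we produce the plain $(s,l)$-constellation and stop, or we obtain an $a'$-syzygy $\mf{s}_2$ whose stable set lies in $\pi_{\mf{g}_2}(J_1)$. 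Set $J_2=\pi_{\mf{g}_2}^{-1}(S_{\mf{s}_2})\subseteq J_1$, so that $|J_2|=a'$ and on $J_2$ the restricted gemini has a syzygy structure on both sides.

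On $J_2$ there are now two linear orderings, inherited from the traversals of $L_{\mf{s}_1}$ and $L_{\mf{s}_2}$. By the Erd\H{o}s--Szekeres theorem, choosing $a'$ at least $(a''-1)^2+1$ lets us pass to a subset $J_3\subseteq J_2$ of size $a''$ on which these two orderings are either identical or exact reverses; in the latter case we simply reverse the traversal of one $L$-path. Choosing $a''$ of order $c(o+3)$ allows us to select $c$ pairs $(k^{(i)}_{\ell},k^{(i)}_{r})_{i\in[c]}$ inside $J_3$ with at least two $J_3$-indices of buffer between consecutive pairs and at least $o-1$ intermediate $J_3$-indices strictly inside each pair.

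For each pair we form a cycle by concatenating the subpath of $L_{\mf{s}_1}$ from the last neighbor of $\pi_{\mf{g}_1}(k^{(i)}_{\ell})$ to the first neighbor of $\pi_{\mf{g}_1}(k^{(i)}_{r})$, the analogous subpath of $L_{\mf{s}_2}$, and the two connecting paths $Q_{k^{(i)}_{\ell}}$ and $Q_{k^{(i)}_{r}}$. Conditions \ref{G1}--\ref{G3} imply that $L_{\mf{g}_1}$ is anticomplete to $L_{\mf{g}_2}$, that the $Q_i$ form a plain polypath, and that each $Q_i$ touches the two $L$-paths only at its endpoints; the syzygy property together with the choice of extremal neighbors kills chords inside each cycle; the intermediate indices within each pair force length at least $o+2$; and the buffer indices between pairs insert at least one separating vertex on $L_{\mf{s}_1}$ and on $L_{\mf{s}_2}$ between the $L$-portions of any two distinct cycles. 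This produces $c$ pairwise disjoint, pairwise anticomplete induced cycles of length at least $o+2$, contradicting the hypothesis that $G$ is $(c,o)$-perforated, and thereby completing the proof. The main obstacle is exactly this last bookkeeping step: one must systematically rule out every kind of potential cross-edge (within a single $L$-path, between the two $L$-paths, between a $Q$-path and an $L$-path, and between two distinct $Q$-paths), which is where the gemini axioms, the syzygy property, and the spacing scheme must all be brought to bear together.
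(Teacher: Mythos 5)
Your proposal follows essentially the same route as the paper: apply Lemma~\ref{lem:ast_to_syzygy} to one side of the gemini, restrict to the surviving indices, apply it again to the other side, and from the resulting double-syzygy structure and the connecting $Q$-paths build $c$ pairwise disjoint, pairwise anticomplete long cycles to contradict $(c,o)$-perforation. The one genuinely different ingredient is your Erd\H{o}s--Szekeres step. The paper produces $\mf{a}_1$ ordered by position along $L_{\mf{g}_1}$ (inherited from the first syzygy $\mf{s}$) and $\mf{a}_2$ ordered by position along $L_{\mf{g}_2}$ (from the second application of Lemma~\ref{lem:ast_to_syzygy}), asserts that $(\mf{a}_1,\mf{a}_2)$ is a $4co$-gemini, and then restricts at positions $J=\{2jo:j\in[2c]\}$ using both syzygy orders and claims $(\mf{s}_1,\mf{s}_2)$ is a $2c$-gemini. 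But Lemma~\ref{lem:ast_to_syzygy} does not preserve the input ordering, so the two syzygy orders are a priori unrelated to each other and to the pairing the $Q$-paths impose; a priori the bijection between $S_{\mf{a}_1}$ and $S_{\mf{a}_2}$ given by the $Q$-paths can be an arbitrary permutation of $[4co]$, in which case the paths $P^1_j$ and $P^2_j$ chosen by position fail to close into cycles via $Q_{2j-1},Q_{2j}$. Your Erd\H{o}s--Szekeres pass to a subset on which the two orders agree (or are reversed, after flipping one traversal) is exactly the reconciliation that justifies this step, and your subsequent bookkeeping (extremal neighbors to kill chords, $o-1$ intermediate indices per pair for length, two buffer indices between pairs to separate the $L$-portions, the gemini axioms to handle $Q$-to-$L$ and $Q$-to-$Q$ edges) is sound, though you could note explicitly that $Q_i$ is anticomplete to $\pi_{\mf{g}_1}(j),\pi_{\mf{g}_2}(j)$ for $j\neq i$ because $\mca{Q}$ is a plain polypath and those vertices lie on $Q_j$. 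In short, your proposal is correct, mirrors the paper's strategy, and adds a reconciliation step that the paper leaves implicit at a point where it is in fact needed.
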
  
  
  \begin{proof} Let $a=(4co)^{(l-1)}(s+d(l-1))\in \poi$ and let 
  $$\gamma=\gamma(c,d,l,o,s)=a^{(l-1)}(s+d(l-1))\geq 1.$$
  Let $(\mf{g}_1,\mf{g}_2)$ be a $\gamma$-gemini in $G$. Suppose for a contradiction that there is no plain $(s,l)$-constellation in $G$. Therefore, applying Lemma~\ref{lem:ast_to_syzygy} to the $d$-meager ordered $\gamma$-asterism $\mf{g}_1$, it follows that there exists an $a$-syzygy $\mf{s}$ in $G$ with $S_{\mf{s}}\subseteq S_{\mf{g}_1}$ and $L_{\mf{s}}\subseteq L_{\mf{g}_1}$. Let $S_2=\pi_{\mf{g}_2}(\pi^{-1}_{\mf{g}_1}(S_{\mf{s}}))$. Then we have $S_2\subseteq S_{\mf{g}_2}$ with $|S_2|=a$. Also, note that the ordered $a$-asterism $\mf{g}_2|S_2$ is $d$-meager, because $\mf{g}_2$ is. So another application of Lemma~\ref{lem:ast_to_syzygy}, this time to $\mf{g}_2|S_2$, implies that there exists a $4co$-syzygy $\mf{a}_2$ in $G$ with $S_{\mf{a}_2}\subseteq S_{\mf{g}_2|S_2}=S_2\subseteq S_{\mf{g}_2}$ and $L_{\mf{a}_2}\subseteq L_{\mf{g}_2|S_2}=L_{\mf{g}_2}$. Let $S_1=\pi_{\mf{g}_1}(\pi^{-1}_{\mf{g}_2}(S_{\mf{a}_2}))\subseteq S_{\mf{g}_1}$ and let $\mf{a}_1=\mf{s}|S_1$. Then $\mf{a}_1$ is a $4co$-syzygy in $G$ with $S_{\mf{a}_1}=S_1\subseteq S_{\mf{g}_1}$ and $S_{\mf{a}_1}=L_{\mf{s}}\subseteq L_{\mf{g}_1}$. 
  
  To sum up, we have shown that $(\mf{a}_1,\mf{a}_2)$ is a $4co$-gemini in $G$ such that for every $i\in \{1,2\}$, $\mf{a}_i$ is a $4co$-syzygy in $G$ with $S_{\mf{a}_i}\subseteq S_{\mf{g}_i}$ and $L_{\mf{a}_i}\subseteq L_{\mf{g}_i}$. Let $J=\{2jo:j\in [2c]\}$, and for every $i\in \{1,2\}$, let $\mf{s}_i=\mf{a}_i|\pi_{\mf{a}_i}(J)$. It follows that both $\mf{s}_1,\mf{s}_2$ are $2o$-ample ordered $2c$-asterisms, and that $(\mf{s}_1,\mf{s}_2)$ is a $2c$-gemini in $G$.
  
  Let $\mca{Q}=\{Q_1,\ldots, Q_{2c}\}$ be the $(2c)$-polypath in $G\setminus (L_{\mf{s}_1}\cup L_{\mf{s}_2})$ which along with $(\mf{s}_1,\mf{s}_2)$ satisfies \ref{G3}. For every $i\in \{1,2\}$ and every $j\in [c]$, let $P_j^i$ be the shortest path in $G[V(\mf{s}_i)]$ from $\pi_{\mf{s}_i}(2j-1)$ to $\pi_{\mf{s}_i}(2j)$. So the interior of $P_j^i$ is contained in $L_{\mf{s}_i}$, and since $\mf{s}_i$ is a $2o$-ample, it follows that $P_j^i$ has length at least $2o+2$. Consequently, for every $j\in [c]$,
$$H_j=\pi_{\mf{s}_1}(2j-1)\dd P_j^1\dd \pi_{\mf{s}_1}(2j)\dd Q_{2j}\dd \pi_{\mf{s}_2}(2j)\dd P_j^2\dd \pi_{\mf{s}_2}(2j-1)\dd Q_{2j-1}\dd \pi_{\mf{s}_1}(2j-1)$$
is a cycle in $G$ of length at least $4o+4$. Moreover, for every $i\in \{1,2\}$, since $\mf{s}_i$ is a $2o$-ample $2c$-syzygy in $G$, by \ref{SY}, the paths $\{P_j^i:j\in [c]\}$ are pairwise disjoint and anticomplete in $G$. Hence, $H_1,\ldots,H_c$ are pairwise disjoint and anticomplete in $G$ (see Figure~\ref{fig:geminilem}). But this violates the assumption that $G$ is $(c,o)$-perforated, and so completes the proof of Lemma~\ref{lem:gemini_to_constellation}.
  \end{proof}
  \begin{figure}
      \centering
      \includegraphics[scale=0.9]{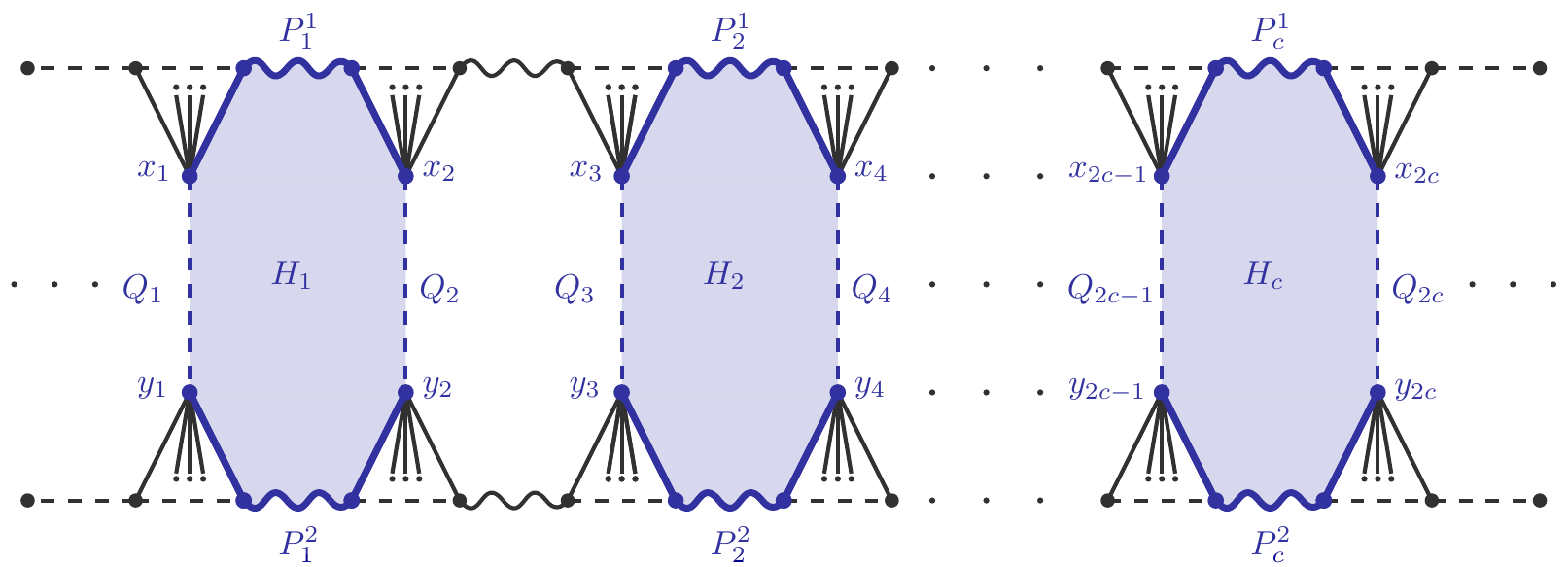}
      \caption{Proof of Lemma~\ref{lem:gemini_to_constellation}, with $\pi_{\mf{s}_1}(i)=x_i$ and $\pi_{\mf{s}_2}(i)=y_i$ for $i=1,\ldots, 2c$.}
      \label{fig:geminilem}
  \end{figure}
  
We are now in a position to prove Theorem~\ref{thm:bundlethm}, which we restate:

\setcounter{section}{5}
\setcounter{theorem}{0}

\begin{theorem}\label{thm:bundlethm}
    \sloppy For all   $a,b,c,h,l,s,o,t\in \poi$ and $s \in \poi \cup \{0\}$, there are constants $\Theta=\Theta(a,b,c,h,l,o,s,t)\in \poi$ and $\theta=\theta(a,b,c,h,l,o,s,t)\in \poi$ with the following property. Let $G$ be a $(c,o)$-perforated graph and let $\mf{T}$ be a collection of $\Theta$ pairwise disentangled plain $(\leq b,\theta)$-bundles in $G$. Then one of the following holds.
    \begin{enumerate}[\rm (a)]
        \item\label{thm:bundlethm_a} $G$ contains either $K_t$ or $K_{t,t}$.
        \item\label{thm:bundlethm_b} There exists a plain $(s,l)$-constellation in $G$.
        \item\label{thm:bundlethm_c} There exists $\mf{A}\subseteq \mf{T}$ with $|\mf{A}|=a$ as well as $\mca{H}_{\mf{b}}\subseteq \mca{L}_{\mf{b}}$ with $|\mca{H}_{\mf{b}}|=h$ for each $\mf{b}\in \mf{A}$, such that for all distinct $\mf{b},\mf{b}'\in \mf{A}$, $S_{\mf{b}}\cup V(\mca{H}_{\mf{b}})$ is anticomplete to $S_{\mf{b}'}\cup V(\mca{H}_{\mf{b}'})$ in $G$.
    \end{enumerate}
\end{theorem}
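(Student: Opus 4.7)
The plan is to layer the three Ramsey-type tools at our disposal---Lemmas~\ref{lem:bundle}, \ref{lem:polypathvspolypath} and~\ref{lem:gemini_to_constellation}---on top of one another. I would begin by applying Lemma~\ref{lem:bundle} to $\mathfrak{T}$ with sufficiently large inputs, yielding one of three outcomes: outcome (a); a (possibly non-plain) $(s, l')$-constellation in $G$ with very large $l'$; or a large sub-collection $\mathfrak{M} \subseteq \mathfrak{T}$ with subpolypaths $\mathcal{F}_{\mathfrak{b}}$ such that the $S_{\mathfrak{b}}$'s are pairwise anticomplete and each $S_{\mathfrak{b}}$ is anticomplete to $V(\mathcal{F}_{\mathfrak{b}'})$ for $\mathfrak{b}' \neq \mathfrak{b}$. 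In both surviving cases the only remaining obstruction to outcome (b) or (c) is potential edges between the $\mathcal{L}$-sides of distinct bundles.

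To control those edges I next apply Lemma~\ref{lem:polypathvspolypath} to the polypaths in hand (the constellation's polypath, which splits as $\bigcup_{\mathfrak{b}\in\mathfrak{N}}\mathcal{G}_{\mathfrak{b}}$, in one case; the family $\{\mathcal{F}_{\mathfrak{b}}:\mathfrak{b}\in\mathfrak{M}\}$ in the other). This delivers refined subpolypaths equipped with a pairwise dichotomy: for every two of them, either their vertex sets are anticomplete, or they are \emph{fully connected} in the sense that every path-pair supports an edge of $G$. A two-colour application of Ramsey's theorem to the auxiliary graph recording ``fully connected'' pairs then yields either a large stable set or a large clique among the refined subpolypaths.

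The stable-set branch is short. On the constellation side, the refined polypaths being pairwise anticomplete upgrades the old constellation to a plain $(s,l)$-constellation, giving outcome (b). On the sub-collection side, combining with the $S$-anticompleteness obtained from Lemma~\ref{lem:bundle} yields outcome (c), with each $\mathcal{H}_{\mathfrak{b}}$ chosen to be the refined polypath.

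The clique branch is the heart of the proof. I would handle it by constructing a large gemini in $G$ and invoking Lemma~\ref{lem:gemini_to_constellation}; in a $(c,o)$-perforated graph the latter must return a plain $(s,l)$-constellation, which is again outcome (b). Concretely, I take two anticomplete paths $L_{\mathfrak{g}_1}, L_{\mathfrak{g}_2}$ from the \emph{same} bundle of the clique, so that condition \ref{G2} is automatic by plainness of that bundle, and use the paths of a \emph{different} clique bundle as carriers: for each carrier path $L$, the fully-connected property supplies a neighbour $v_L \in L$ of $L_{\mathfrak{g}_1}$ and a neighbour $w_L \in L$ of $L_{\mathfrak{g}_2}$, and a closest-point choice makes the subpath of $L$ from $v_L$ to $w_L$ into a $Q_L$ satisfying \ref{G3}, while plainness of the carrier bundle makes the resulting family $\{Q_L\}$ plain and disentanglement makes \ref{G1} hold. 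The main obstacle here is securing the $d$-meagerness required by Lemma~\ref{lem:gemini_to_constellation}, which I would enforce by a further pigeonhole/Ramsey thinning of the carrier paths, exploiting $K_{t,t}$-freeness of $G$ (else outcome (a) already holds) via K\H{o}v\'ari--S\'os--Tur\'an-style bounds on the number of vertices of $L_{\mathfrak{g}_1}$ with many neighbours among the chosen $v_L$'s. The remainder of the argument amounts to book-keeping the parameter chain so that each Ramsey step is fed a sufficiently large input.
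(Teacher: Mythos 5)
Your overall architecture is right---Lemma~\ref{lem:bundle}, then Lemma~\ref{lem:polypathvspolypath}, then construct a gemini and feed it to Lemma~\ref{lem:gemini_to_constellation}---and it matches the paper in outline, including the choice to take $L_{\mf{g}_1},L_{\mf{g}_2}$ from a single bundle and use paths of another bundle as carriers. But the part you flag as the ``main obstacle,'' namely the meagerness of the two resulting asterisms, is exactly where the proposal has a genuine gap, and the KST-style fix you sketch does not close it.

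The issue is that $d$-meagerness is a \emph{pointwise} bound: every vertex of $L_{\mf{g}_1}$ must have at most $d$ neighbours in the chosen set $\{v_L\}$. K\H{o}v\'ari--S\'os--Tur\'an gives you a bound on the number of \emph{edges} of the bipartite graph between $L_{\mf{g}_1}$ and $\{v_L\}$, of the form $O(|L_{\mf{g}_1}|\cdot |\{v_L\}|^{1-1/t} + |\{v_L\}|)$, so the number of high-degree vertices of $L_{\mf{g}_1}$ can grow with $|L_{\mf{g}_1}|$, which is unbounded. A single vertex of $L_{\mf{g}_1}$ adjacent to all (or many) of the carrier paths is perfectly consistent with $K_{t,t}$-freeness and defeats any ``thinning of the carrier paths'': no subset of the $v_L$'s avoids that vertex. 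Deleting the heavy vertices from $L_{\mf{g}_1}$ breaks the path and forces you to discard the carrier paths whose only attachments were on deleted vertices, with no control on how many are lost. The paper handles this with a different dichotomy: call $L\in\mca{G}$ \emph{rigid} if some vertex of $L$ has a neighbour in at least $l$ paths of $\mca{G}'$. If all but at most one path in $\mca{G}$ is rigid, a pigeonhole over the (boundedly many) $l$-subsets of $\mca{G}'$ produces $s$ rigid paths whose witnessing vertices all hit the \emph{same} $l$ paths of $\mca{G}'$; those $s$ vertices, together with that $l$-subset, form a plain $(s,l)$-constellation, giving outcome~(b). Otherwise two non-rigid paths $L_1,L_2\in\mca{G}$ exist, and non-rigidity gives $(l-1)$-meagerness of the two asterisms immediately, with no appeal to $K_{t,t}$-freeness or KST. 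This rigidity/pigeonhole dichotomy is the key idea your outline is missing, and without it the argument does not go through.

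Two smaller points. First, since you may take $n=1$ in Lemma~\ref{lem:bundle}, outcome~(b) there already produces $\mca{G}_{\mf{b}}\subseteq\mca{L}_{\mf{b}}$ for a single plain bundle $\mf{b}$, so $(S,\mca{G}_{\mf{b}})$ is automatically a \emph{plain} $(s,l)$-constellation; the case you label ``possibly non-plain constellation'' does not need the additional Lemma~\ref{lem:polypathvspolypath} treatment. Second, you do not need a Ramsey step to extract a clique from the auxiliary ``fully-connected'' graph: if outcome~(c) of the theorem fails, then by Lemma~\ref{lem:polypathvspolypath} there is already at least one fully-connected pair $\mf{b},\mf{b}'$, and a single such pair suffices for the gemini construction. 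Neither simplification is needed for correctness, but both streamline the argument; the rigidity dichotomy, by contrast, is essential.
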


\begin{proof} Let
$\gamma=\gamma(c,l-1,l,o,s)\geq 1$
be as in Lemma~\ref{lem:gemini_to_constellation} and let $\varphi=\varphi(a,\max\{h,s(\gamma+4l-4)^{l}+1\})\geq 1$
be given by Lemma~\ref{lem:polypathvspolypath}. Let $\beta(\cdot,\cdot,\cdot,\cdot,\cdot,\cdot,\cdot)$ be as in Lemma~\ref{lem:bundle}, and define
$$\Theta=\beta(b,\varphi,l,a,1,s,t);$$
$$\theta=2b(l-1)+\varphi.$$
We show that the above choices of $\Theta$ and $\theta$ satisfy Theorem~\ref{thm:bundlethm}. Suppose for a contradiction that none of the three outcomes of Theorem~\ref{thm:bundlethm} holds.

By the choice of $\Theta$ and $\theta$, we can apply Lemma~\ref{lem:bundle} to $\mf{T}$. Note that Lemma~\ref{lem:bundle}\ref{lem:bundle_a} implies Theorem~\ref{thm:bundlethm}\ref{thm:bundlethm_a}. Also, since all the bundles in $\mf{T}$ are plain, it follows that Lemma~\ref{lem:bundle}\ref{lem:bundle_b} implies Theorem~\ref{thm:bundlethm}\ref{thm:bundlethm_b}. Therefore, Lemma~\ref{lem:bundle}\ref{lem:bundle_c} holds. Specifically, we have: 

\sta{\label{st:bundlelemapplied}There exists $\mf{A}\subseteq \mf{T}$ with $|\mf{A}|=a$ as well as $\mca{F}_{\mf{b}}\subseteq \mca{L}_{\mf{b}}$ with $|\mca{F}_{\mf{b}}|=\varphi$ for each $\mf{b}\in \mf{A}$, such that for all distinct $\mf{b},\mf{b}'\in \mf{A}$, $S_{\mf{b}}$ is anticomplete to $S_{\mf{b}'}\cup V(\mca{F}_{\mf{b}'})$ in $G$.}

Next, by the choice of $\varphi$, we may apply Lemma~\ref{lem:polypathvspolypath} to $\{\mca{F}_{\mf{b}}:\mf{b}\in \mf{A}\}$, and deduce that:

\sta{\label{st:pathlemmaapplied} For every $\mf{b}\in \mf{A}$,  there exists a $\max\{h,s(\gamma+4l-4)^{l}+1\}$-polypath $\mca{G}_{\mf{b}}\subseteq \mca{F}_{\mf{b}}$ such that for all distinct $\mf{b},\mf{b}'\in \mf{A}$ either $V(\mca{G}_{\mf{b}})$ is anticomplete to $V(\mca{G}_{\mf{b}'})$ in $G$, or for every $L\in \mca{G}_{\mf{b}}$ and every $L'\in \mca{G}_{\mf{b}'}$, there is an edge in $G$ with an end in $L$ and an end in $L'$.}

In particular, for every $\mf{b}\in \mf{A}$, one may pick $\mca{H}_{\mf{b}}\subseteq \mca{G}_{\mf{b}}$ with $|\mca{H}_{\mf{b}}|=h$. Note that, for distinct $\mf{b},\mf{b}'\in \mf{A}$, if $V(\mca{G}_{\mf{b}})$ is anticomplete to $V(\mca{G}_{\mf{b}'})$ in $G$, then by \eqref{st:bundlelemapplied}, $S_{\mf{b}}\cup V(\mca{H}_{\mf{b}})$ is anticomplete to $S_{\mf{b}'}\cup V(\mca{H}_{\mf{b}'})$. Consequently, from \eqref{st:pathlemmaapplied} and since Theorem~\ref{thm:bundlethm}\ref{thm:bundlethm_c} is assumed not to hold, it follows that:

\sta{\label{st:gettwogoods} There are distinct $\mf{b},\mf{b}'\in \mf{A}$ such that for every $L\in \mca{G}_{\mf{b}}$ and every $L'\in \mca{G}_{\mf{b}'}$, there is an edge in $G$ with an end in $L$ and an end in $L'$.}

Henceforth, let $\mf{b},\mf{b}'$ be as in \eqref{st:gettwogoods}. Since $|\mca{G}_{\mf{b}}|=|\mca{G}_{\mf{b}'}|\geq s(\gamma+4l-4)^l+1>\gamma+4l-4$, we may choose $\mca{G}\subseteq \mca{G}_{\mf{b}}$ and $\mca{G}'\subseteq \mca{G}_{\mf{b}'}$ with $|\mca{G}|=s(\gamma+4l-4)^l+1$ and $|\mca{G}'|=\gamma+4l-4$. It follows that both $\mca{G}$ and $\mca{G}'$ are plain polypaths in $G$. For every path $L\in \mca{G}$, let us say $L$ is \textit{rigid} if there exists a vertex $x_L\in L$ as well as $\mca{G}'_L\subseteq \mca{G}'$ with $|\mca{G}'_L|=l$ such that $x_L$ has a neighbor in every path in $\mca{G}'_L$. Let $\mca{R}\subseteq \mca{G}$ be the set of all rigid paths. Then:

\sta{\label{st:fewrigidpaths} 
        We have $|\mca{G}\setminus \mca{R}|\geq 2$, that is, there are two distinct paths $L_1,L_2\in \mca{G}$ such that for each $i\in \{1,2\}$, every vertex in $L_i$ has a neighbor in at most $l-1$ paths in $\mca{G}'$.}

Suppose not. Then we have $|\mca{R}|\geq s(\gamma+4l-4)^l$. For every $L\in \mca{R}$, let $x_L\in L$ and $\mca{G}'_L\subseteq \mca{G}'$ with $|\mca{G}'_L|=l$ be as in the definition of a rigid path. Since $|\mca{G}'|=\gamma+4l-4$, it follows that there exists $\mca{S}\subseteq \mca{R}\subseteq \mca{G}$ with $|\mca{S}|=s$ and $\mca{L}\subseteq \mca{G}'$ with $\mca{L}=l$ such that for every $L\in \mca{S}$, we have $\mca{G}'_L=\mca{L}$. Let $S=\{x_L:L\in \mca{S}\}$. Then every vertex in $S$ has a neighbor in every path in $\mca{L}$. But now since both $\mca{G}$ and $\mca{G}'$ are plain, it follows that $(S,\mca{L})$ is a plain $(s,l)$-constellation in $G$, and so Theorem~\ref{thm:bundlethm}\ref{thm:bundlethm_b} holds, a contradiction. This proves \eqref{st:fewrigidpaths}.
\medskip

Let $L_1,L_2\in \mca{G}$ be as in \eqref{st:fewrigidpaths}. Let $\mca{T}$ be the set of all paths $L'\in \mca{G}'$ for which some vertex in $\partial L_1\cup \partial L_2$ has a neighbor in $L'$ in $G$. Then by \eqref{st:fewrigidpaths}, we have $|\mca{T}|\leq 4l-4$. Consequently, we may choose $\gamma$ distinct paths $L'_1, \ldots, L'_{\gamma}\in \mca{G}'$ such that for every $i\in \gamma$, $\partial L_1\cup \partial L_2$ is anticomplete to $L_i$ in $G$. By \eqref{st:gettwogoods}, for every $i\in[\gamma]$, we may choose a shortest path $Q_j$ in $L'_j$ from a vertex $x_j^1\in L'_j$ with a neighbor in $L_1$ to a vertex $x_j^2\in L'_j$ with a neighbor in $L_2$. For $i\in \{1,2\}$, let $S_i=\{x_j^i:j\in [\gamma]\}$. It follows that every vertex in $S_i$ has a neighbor in $L_i$ while $S_i$ is anticomplete to $\partial L_i$. Therefore, $\mf{g}_i=(S_i,L_i)$ is an ordered $\gamma$-asterism in $G$ with $\pi_{\mf{g}_i}(j)=x_j^i$ for every $j\in [\gamma]$. Also, by \eqref{st:fewrigidpaths}, $\mf{g}_i$ is $(l-1)$-meager. 

Now, since both $\mca{G}$ and $\mca{G}'$ are plain, and by the choice of $Q_1,\ldots, Q_{\gamma}$, we conclude that $(\mf{g}_1,\mf{g}_2)$ is a pair of $(l-1)$-meager ordered $\gamma$-asterisms in $G$ satisfying \ref{G1} and \ref{G2}, and that $\mca{Q}=\{Q_1,\ldots, Q_{\gamma}\}$ satisfies \ref{G3}. Hence, $(\mf{g}_1,\mf{g}_2)$ is a $\gamma$-gemini in $G$ where both $\mf{g}_1$ and $\mf{g}_2$ are $(l-1)$-meager. But then Lemma~\ref{lem:gemini_to_constellation} along with the choice of $\gamma$ implies that there exists a plain $(s,l)$-constellation in $G$, and so Theorem~\ref{thm:bundlethm}\ref{thm:bundlethm_b} holds, a contradiction. This completes the proof of Theorem~\ref{thm:bundlethm}.
\end{proof}

\section{Transition graphs: a tale of an intuition coming true}\label{sec:surgery}
In this section, we take a substantial step towards the proof of Theorem~\ref{mainthmgeneral} by showing that:

 \begin{theorem}\label{thm:const_to_fire}
     For all   $c,o,t\in \poi$ and $s \in \poi \cup \{0\}$, there are constants $\Sigma=\Sigma(c,o,s,t) \in \poi \cup \{0\}$ and $\Lambda=\Lambda(c,o,s,t)\in \poi$ with the following property. Let $G$ be a $(c,o)$-perforated graph. Assume that there exists a plain $(\Sigma,\Lambda)$-constellation in $G$. Then $G$ contains either $K_t$ or $K_{t,t}$, or there is a full $(s,o)$-occultation in $G$.
\end{theorem}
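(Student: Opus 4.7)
The plan is to induct on $s$, producing a full $(s,o)$-occultation directly from the plain constellation via the \emph{transition graph} machinery sketched in Section~\ref{sec:outline}. The central construct is as follows: for each path $L\in \mca{L}_{\mf{c}}$, let $T_L$ denote the graph on $S_{\mf{c}}$ in which distinct $x,y$ are adjacent iff there is an induced ``transition route'' $x\dd uLv\dd y$, where $u\in N_L(x)$, $v\in N_L(y)$, the subpath $uLv$ has length at least $o$, and no interior vertex of $uLv$ is either adjacent to an $S_{\mf{c}}$-vertex or adjacent to $x$ or $y$. Since the range of $L\mapsto T_L$ is the finite set of labelled graphs on $S_{\mf{c}}$, a pigeonhole argument over $\mca{L}_{\mf{c}}$ passes to a large sub-polypath $\mca{L}'\subseteq \mca{L}_{\mf{c}}$ on which all $T_L$ coincide with a common graph $\mathsf{T}$.

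Next I would dichotomise on $\mathsf{T}$. If $\mathsf{T}$ carries a matching of size $c$, then for two distinct paths $L,L'\in \mca{L}'$, splicing a transition route through $L$ with one through $L'$ for each matching edge $\{x_i,y_i\}$ produces an induced cycle $H_i$ of length at least $o+2$. Distinct routes through $L$ (resp.\ $L'$) have internally disjoint interiors, since any shared interior vertex would have to be a neighbour of a $S_{\mf{c}}$-vertex outside its matching pair, violating the defining property of $T_L$. After refining to a sub-matching to handle possibly shared boundary vertices, $H_1,\dots,H_c$ become pairwise vertex-disjoint, and plainness of $\mca{L}'$ precludes edges between them. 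This contradicts $(c,o)$-perforation, so we may assume $\mathsf{T}$ admits a vertex cover $X\subseteq S_{\mf{c}}$ with $|X|\leq 2(c-1)$.

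With $X$ in hand, apply the inductive hypothesis to the sub-constellation $(S_{\mf{c}}\setminus X,\mca{L}')$, still of adequate size, to obtain a full $(s-1,o)$-occultation $\mf{o}'$ in $G$ with $S_{\mf{o}'}\subseteq S_{\mf{c}}\setminus X$ and $L_{\mf{o}'}\subseteq L_0$ for some $L_0\in \mca{L}'$. The vertex cover property guarantees that every open $\mf{o}'$-piece contains in its interior a neighbour of some vertex of $X$ (otherwise its endpoints would witness an edge of $\mathsf{T}$ within $S_{\mf{c}}\setminus X$); an analogous argument on long closed pieces yields the same conclusion. Pigeonholing over the boundedly many elements of $X$ isolates a single $x^{\star}\in X$ whose neighbours meet a $1/|X|$-fraction of both kinds of pieces. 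A suitable restriction of $\mf{o}'$ to the sub-occultation retaining only the $x^{\star}$-covered pieces, combined with a trimming of $L_{\mf{o}'}$ to keep $x^{\star}$ anticomplete to its endpoints and an appeal to Lemma~\ref{lem:occultation_top} (which uses an ambient $2$-ample interrupted asterism to lift a $(s-1,o)$-occultation to a $(s,o)$-occultation), then appends $x^{\star}$ on top to produce the sought full $(s,o)$-occultation.

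The main technical obstacles I anticipate are threefold: (i) securing vertex-disjointness of the $c$ cycles in the matching case when transition routes share boundary vertices, which I expect to resolve by choosing a canonical (say ``leftmost'') route per matching edge followed by a sub-matching refinement; (ii) the restriction step, where shrinking $\mf{o}'$ while preserving both the interrupted and $o$-invaded properties is delicate and likely forces a joint induction that simultaneously tracks a suitable ambient $2$-ample interrupted asterism alongside the growing occultation (precisely the role of Lemma~\ref{lem:occultation_top}); and (iii) the doubly-exponential dependence of $\Sigma(c,o,s,t)$ and $\Lambda(c,o,s,t)$ on $s$, arising from the pigeonhole losses compounded across the $s$ inductive steps. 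Obstacle (ii) is the conceptually hardest, and I expect it to dictate both the precise form of the induction hypothesis and the quantitative bounds on $\Sigma$ and $\Lambda$.
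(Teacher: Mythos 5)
Your proposal correctly identifies the transition-graph dichotomy, which is indeed the engine of the paper's argument (it is the content of Lemma~\ref{lem:getinterrupted}), but it has a genuine gap in how it promotes an interrupted asterism to a \emph{full} occultation, i.e.\ in how the $o$-invaded condition \ref{OI} (closed pieces) is secured. Your sentence ``an analogous argument on long closed pieces yields the same conclusion'' does not hold up: the transition graph $\mathsf{T}$ (in yours or the paper's definition) has no loops, so a vertex cover of $\mathsf{T}$ says nothing at all about a closed piece $P$ both of whose endpoints are adjacent to the \emph{same} vertex $z\in S_{\mf{c}}\setminus X$. It is entirely consistent with $X$ being a vertex cover that such a $P$ of length $\geq o$ has its interior anticomplete to $X$, and indeed to all of $S_{\mf{c}}\setminus\{z\}$; in that case $P\cup\{z\}$ is an induced cycle of length $\geq o+2$ that your argument cannot break. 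The paper recognises exactly this obstruction and handles it by a \emph{separate} mechanism (Theorem~\ref{thm:interrupted_to_occultation}): first produce only an interrupted asterism via transition graphs, then convert it to a full occultation by a second induction on $c+s$ in which a missed long closed piece $P$ together with its apex $z$ is treated as one of the $c$ disjoint long cycles, a large sub-asterism anticomplete to $P\cup\{z\}$ is extracted (using the interrupted property plus $2$-ampleness), and one inducts on $c$. This requires the factor $s^c$ in the asterism size, which your account of the quantitative dependence does not anticipate.

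A secondary but also real issue is the step ``pigeonhole to a single $x^\star$ \ldots retain only the $x^\star$-covered pieces.'' Pieces of an occultation are consecutive intervals on a single path, so discarding a subset of them does not leave an occultation; and even if one could, you need $x^\star$ to hit \emph{every} open piece of the object you ultimately attach it to, not a $1/|X|$-fraction. The paper avoids this by minimising the covering set $\Phi_L(\{z,z'\})\subseteq X$ \emph{per route} rather than choosing a covering vertex per piece, Ramsey-ing to a fixed set $Y$, fixing $y\in Y$, and then for each pair selecting a specific route $Q_{i,L}$ on which $y$ is the \emph{only} $Y$-vertex with a neighbour in the interior. All subsequent work (including the inductive call) happens inside $Q_{i,L}^*$, where every relevant open piece is automatically $y$-covered; the cherry-on-top Lemma~\ref{lem:interrupted_top} then applies cleanly. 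This is the construction that makes obstacle (ii) in your own list actually go through, and it is quite different from restricting to $x^\star$-covered pieces.
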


The proof of Theorem~\ref{thm:const_to_fire} is centered around a rather intuitive idea, the ``matching/vertex-cover duality in transition graphs'' described in Section~\ref{sec:outline}. Our job here is to inject rigor and the main step will be taken in Lemma~\ref{lem:getinterrupted}. But first we need to add another lemma to our arsenal, which will also be used in the next section.

\begin{lemma}\label{lem:nosubdivision_ample}
    For all   $l,m\in \poi$ and $s \in \poi \cup \{0\}$, there is a constant $\psi=\psi(l,m,s)\in \poi$ with the following property. Let $G$ be a graph. Assume that there exists a $(\psi,l+m^2-1)$-constellation $\mf{c}$ in $G$. Then for every $d\in \poi$, one of the following holds. 
      \begin{enumerate}[\rm (a)]
          \item\label{lem:nosubdivision_ample_a} $G$ contains a $(\leq d)$-subdivision of $K_m$ as a subgraph.
          \item\label{lem:nosubdivision_ample_b} There exists $S\subseteq S_{\mf{c}}$ with $|S|=s$ and $\mca{L}\subseteq \mca{L}_\mf{c}$ with $|\mca{L}|=l$ such that for every $L\in \mca{L}$, $(S,L)$ is a $d$-ample $s$-asterism in $G$.
      \end{enumerate}
  \end{lemma}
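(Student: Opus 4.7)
The plan is to fix $d \in \poi$, apply a two-stage Ramsey-type reduction to isolate a large subset $S' \subseteq S_{\mf{c}}$ on which the interaction of pairs of vertices with each path $L \in \mca{L}_{\mf{c}}$ is completely uniform, and then run a dichotomy based on how many paths $L$ admit a short path between pairs of $S'$.

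First, for each $x \in S_{\mf{c}}$ and each $L \in \mca{L}_{\mf{c}}$, I would record the pair in $\{0,1\}^2$ indicating whether $x$ is adjacent in $G$ to each of the two endpoints of $L$. This gives a coloring of $S_{\mf{c}}$ using at most $4^{l+m^2-1}$ colors, so pigeonhole extracts a subset $S^0 \subseteq S_{\mf{c}}$ of size about $\psi / 4^{l+m^2-1}$ on which all these endpoint-adjacency attributes are constant for each $L$. Next, for each $2$-subset $\{x,y\} \subseteq S^0$ and each $L$, I would record whether there is a path from $x$ to $y$ of length at most $d+1$ inside $G[\{x,y\} \cup V(L)]$; applying Theorem~\ref{multiramsey} to the resulting $2^{l+m^2-1}$-coloring of $\binom{S^0}{2}$ produces a subset $S' \subseteq S^0$ of size $\max\{s,m\}$ on which this indicator depends only on $L$. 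Call $L$ \emph{good} if no pair of $S'$ has such a short path through $L$, and \emph{bad} otherwise. Setting
\[
  \psi(l,m,s) \;=\; 4^{l+m^2-1} \cdot \rho\bigl(\max\{s,m\},\, 2,\, 2^{l+m^2-1}\bigr),
\]
with $\rho$ as in Theorem~\ref{multiramsey}, ensures all sizes come out as needed.

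The key observation is that, for any good $L$, the set $S'$ must in fact be anticomplete to $\partial L$. For if some $x \in S'$ were adjacent to an endpoint $e$ of $L$, then by the uniformity obtained in the first stage every $y \in S'$ would also be adjacent to $e$, and hence the two-edge path $x\dd e\dd y$ would sit inside $G[\{x,y\}\cup V(L)]$ and have length $2\leq d+1$, contradicting that $L$ is good. Consequently, for any $s$-subset $S$ of $S'$ and any good $L$, each $x \in S$ has a neighbor in $L\setminus \partial L=L^*$, so $(S,L)$ is a genuine $s$-asterism, and it is $d$-ample because no pair of vertices in $S$ is joined by a short path through $L$.

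It now suffices to dichotomize. If at least $l$ paths are good, then any such $l$ paths together with any $s$-subset of $S'$ yield outcome~\ref{lem:nosubdivision_ample_b}. Otherwise there are at most $l-1$ good paths, hence at least $(l+m^2-1)-(l-1) = m^2 \geq \binom{m}{2}$ bad paths; choosing any $m$ vertices from $S'$ and assigning a distinct bad path to each of the $\binom{m}{2}$ pairs, the witnessing short paths have length at most $d+1$ and are pairwise internally disjoint (their interiors lie in pairwise disjoint paths from $\mca{L}_{\mf{c}}$), producing a $(\leq d)$-subdivision of $K_m$ as in outcome~\ref{lem:nosubdivision_ample_a}. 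The main delicate point is the asterism verification in the first case, and it is precisely this that dictates the preliminary unary pigeonhole on endpoint adjacencies; once that step is in place, the exponent $m^2-1$ matches what is needed because each bad path supplies exactly one subdivided edge and $K_m$ has only $\binom{m}{2}\leq m^2-1$ edges.
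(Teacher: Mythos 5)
Your proof follows essentially the same route as the paper's: both color $\binom{S_{\mf{c}}}{2}$ by recording, for each pair, which paths $L\in\mca{L}_{\mf{c}}$ carry a short connection (a path of length $\leq d+1$ with interior in $L$), apply Theorem~\ref{multiramsey} to make this set of paths independent of the pair, and then dichotomize according to whether at least $l$ paths carry no short connection. Where you differ is in securing anticompleteness to $\partial\mca{L}$: the paper takes $|S'|=s+2l$ (inside $Z$ of size $\max\{m,s+2l\}$), observes that since $d\geq 1$ each vertex of $V(\mca{L})$ has at most one neighbor in $S'$, so at most $2l$ members of $S'$ are adjacent to $\partial\mca{L}$, and discards them; you instead run a preliminary unary pigeonhole on endpoint-adjacency, which lets you keep $|S'|=\max\{s,m\}$ at the cost of a $4^{l+m^2-1}$ factor in $\psi$. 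Both are perfectly reasonable.

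There is one small gap. Your key observation---that for a good $L$ the set $S'$ is anticomplete to $\partial L$---needs some second vertex $y\in S'$ distinct from $x$ to produce the length-2 path, so the argument breaks when $|S'|=\max\{s,m\}=1$, i.e.\ when $m=1$ and $s\leq 1$. When $s=0$, outcome~\ref{lem:nosubdivision_ample_b} holds trivially with $S=\emptyset$, so that's harmless. But when $s=m=1$ every path is vacuously good, your dichotomy takes the outcome~\ref{lem:nosubdivision_ample_b} branch, and the claim that $(\{x\},L)$ is a $1$-asterism is unjustified since $x$ may well be adjacent to an endpoint of $L$. The lemma is of course still true for $m=1$, because a single vertex is a $(\leq d)$-subdivision of $K_1$ and $S_{\mf{c}}\neq\emptyset$, so outcome~\ref{lem:nosubdivision_ample_a} always fires; but your argument as written does not reach it. A one-line fix is to take $|S'|=\max\{s,m,2\}$, or to dispatch $m=1$ as a trivial separate case. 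The paper's choice of $|S'|=s+2l\geq 2$ quietly sidesteps the issue.
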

  \begin{proof}
Let 
$$\psi=\psi(l,m,s)=\rho(\max\{m,s+2l\},2,2^{l+m^2-1});$$
where $\rho(\cdot,\cdot,\cdot)$ is as in Theorem~\ref{multiramsey}. For every $2$-subset $X$ of $S_{\mf{c}}$, let $\Phi(X)$ be the set of all paths $L\in \mca{L}_{\mf{c}}$ for which there exists a path $R$ in $G$ of length at most $d+1$ with $\partial R=X$ and $R^*\subseteq L$. It follows that the map $\Phi:\binom{S_{\mf{c}}}{2}\rightarrow 2^{\mca{L}_{\mf{c}}}$ is well-defined. From the choice of $\psi$ and Theorem~\ref{multiramsey}, we deduce that there exists $\mca{W}\subseteq \mca{L}_{c}$ as well as $Z\subseteq S_{\mf{c}}$ with $|Z|=\max\{m,s+2l\}$, such that for every $2$-subset $X$ of $Z$, we have $\Phi(X)=\mca{W}$.

First, assume that $|\mca{W}|\geq m^2$. Let $M\subseteq Z$ with $|M|=m$. Then for every $2$-subset $X$ of $M$, one may choose a path $L_{X}\in \mca{W}=\Phi(X)$ such that the paths $\{L_X:X\in \binom{M}{2}\}$ are pairwise distinct, and hence disjoint. Also, from the definition of $\Phi$, it follows that for every $2$-subset $X$ of $M$, there exists a path $R_X$ in $G$ of length at most $d+1$ with $\partial R_X=X$ and $R_X^*\subseteq L_X$. But then $G[\bigcup_{X\in \binom{M}{2}}R_X]$ contains a $(\leq d)$-subdivision of $K_m$ as a (spanning) subgraph, and so Lemma~\ref{lem:nosubdivision_ample}\ref{lem:nosubdivision_ample_a} holds, as desired.

Next, assume that $|\mca{W}|<m^2$. Then there exists $\mca{L}\subseteq \mca{L}_{\mf{c}}\setminus \mca{W}$ with $|\mca{L}|=l$. Pick a subset $S'$ of $Z$ with $|S'|=s+2l$. For every $2$-subset $X$ of $S'$ and every path $L\in \mca{L}$, since we have $\Phi(X)=\mca{W}$, it follows from the definition of $\Phi$ that there is no path $R$ in $G$ of length at most $d+1$ with $\partial L=X$ and $R^*\subseteq L$. In particular, since $d\geq 1$, it follows that every vertex in $V(\mca{L})$ has at most one neighbor in $S'$. Consequently, there exists $S\subseteq S'$ with $|S|=s$ which is anticomplete to $\partial \mca{L}$.  Now for every $L\in \mca{L}$, every vertex in $S$ has a neighbor in $L$ while $S$ is anticomplete to $\partial L$, and for every $2$-subset $X$ of $S$, there is no path $R$ in $G$ of length at most $d+1$ with $\partial L=X$ and $R^*\subseteq L$. Hence, for every $L\in \mca{L}$, $(S,L)$ is a $d$-ample $s$-asterism in $G$, and so $S$ and $\mca{L}$ satisfy Lemma~\ref{lem:nosubdivision_ample}\ref{lem:nosubdivision_ample_b}. This completes the proof of Lemma~\ref{lem:nosubdivision_ample}. \end{proof}

It is time to give a formal definition of transition graphs. Given a graph $G$ and an (ordered) asterism $\mf{a}$ in $G$, the \textit{transition graph $\mathsf{T}_{\mf{a}}$ of $\mf{a}$} is the graph with $V(\mathsf{T}_{\mf{a}})=S_{\mf{a}}$ such that for all distinct $x,y\in S_{\mf{a}}$, we have $xy\in E(\mathsf{T}_{\mf{a}})$ if and only if there exists an $\mf{a}$-route $R$ from $x$ to $y$ such that $S_{\mf{a}}\setminus \{x,y\}$ is anticomplete to $R^*$ (and so to $R$) in $G$; see Figure~\ref{fig:transition}). The following encompasses the bulk of difficulty in the proof of Theorem~\ref{thm:const_to_fire}.

\begin{figure}[t!]
\centering
\includegraphics[scale=1]{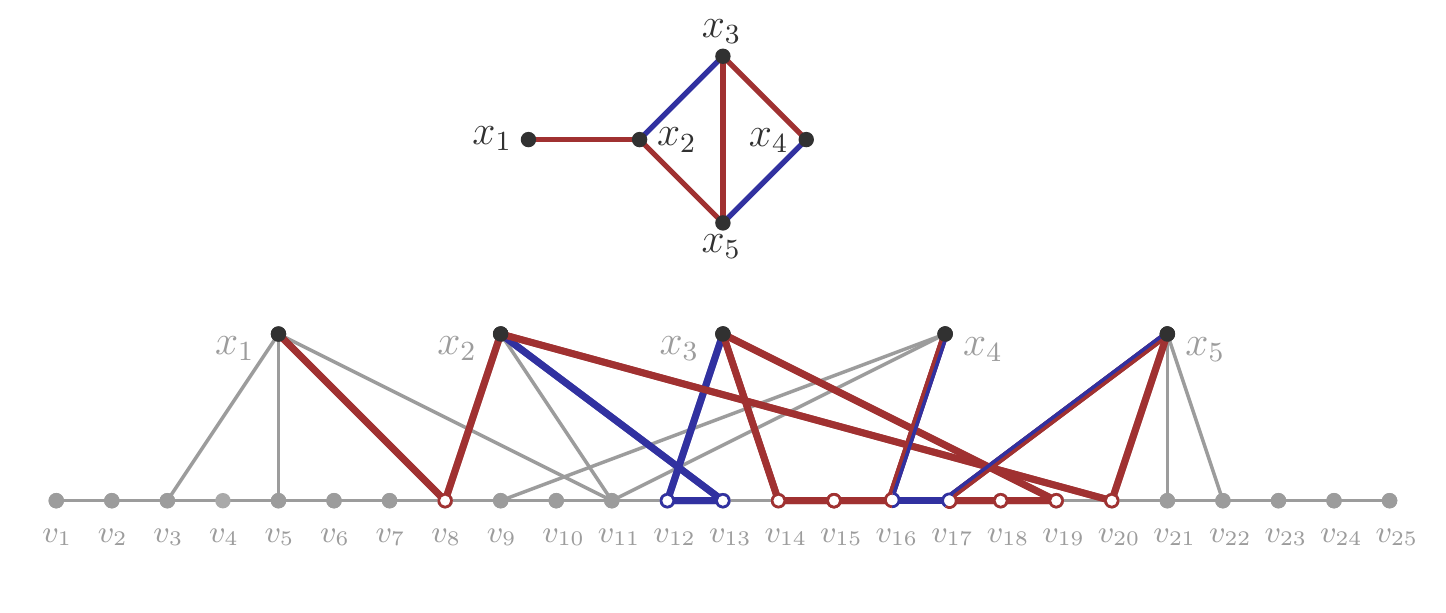}







\caption{The transition graph $\mathsf{T}_{\mf{a}}$ of the $5$-asterism $\mf{a}$ in Figure~\ref{fig:asterism} with the $\mf{a}$-routes $x_1\dd v_8\dd x_2$, $x_2\dd v_{13}\dd v_{12}\dd x_3$, $x_2\dd v_{20}\dd x_5$, $x_3\dd v_{14}\dd v_{15}\dd v_{16}\dd x_4$, $x_3\dd v_{19}\dd v_{18}\dd v_{17}\dd x_5$ and $x_4\dd v_{16}\dd v_{17}\dd x_5$ corresponding to the edges of $\mathsf{T}_{\mf{a}}$. Note that the unique $\mf{a}$-route $x_1\dd v_{11}\dd v_{12}\dd x_3$ from $x_1$ to $x_3$ contains neighbors of both  $x_2$ and $x_4$.}
\label{fig:transition}
\end{figure}

 \begin{lemma}\label{lem:getinterrupted}
      For all   $c,o,t\in \poi$ and $s \in \poi \cup \{0\}$, there are constants $\sigma=\sigma(c,o,s,t) \in \poi \cup \{0\}$ and $\lambda=\lambda(c,o,s,t)\in \poi$ with the following property. Let $G$ be a $(c,o)$-perforated graph. Assume that there exists a plain  $(\sigma,\lambda)$-constellation $\mf{c}$ in $G$. Then one of the following holds.
      \begin{enumerate}[\rm (a)]
          \item\label{lem:getinterrupted_a} $G$ contains either $K_t$ or $K_{t,t}$.
        \item\label{lem:getinterrupted_b} There exists an $(o+2)$-ample, interrupted ordered $s$-asterism $\mf{a}$ in $G$ such that $S_{\mf{a}}\subseteq S_{\mf{c}}$, and for some $L\in \mca{L}_\mf{c}$, we have $L_{\mf{a}}\subseteq  L$.
      \end{enumerate}
 \end{lemma}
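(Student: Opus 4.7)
My plan is to proceed by induction on $s$, with the base case $s = 0$ trivial (the empty asterism satisfies \ref{lem:getinterrupted_b}). For the inductive step, I assume the lemma for $s-1$ and choose $\sigma(c, o, s, t)$ and $\lambda(c, o, s, t)$ sufficiently large compared to $\sigma(c, o, s-1, t)$ and $\lambda(c, o, s-1, t)$. The first preprocessing step promotes $(o+2)$-ampleness: taking $H$ to be the disjoint union of $c$ cycles of length $o+2$, any induced subdivision of $H$ in $G$ would violate $(c,o)$-perforation, so if outcome \ref{lem:getinterrupted_a} fails, Theorem~\ref{dvorak} rules out $(\leq o+2)$-subdivisions of $K_m$ in $G$ for a suitable $m$, and Lemma~\ref{lem:nosubdivision_ample} then extracts a (still plain) sub-constellation $(S^1, \mca{L}^1)$ in which every $(S^1, L)$ is $(o+2)$-ample.

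Next, Theorem~\ref{multiramsey} applied to $\mca{L}^1$, coloured by the transition graph $\mathsf{T}_{(S^1, L)}$ on $S^1$ (of which there are at most $2^{\binom{|S^1|}{2}}$ possibilities), produces $\mca{L}^2 \subseteq \mca{L}^1$ of size at least $2$ on which the transition graph is a fixed graph $\mathsf{T}$. The pivotal dichotomy is the matching number of $\mathsf{T}$. If $\mathsf{T}$ contains a matching $\{x_i y_i : 1 \leq i \leq c\}$, I pick two distinct $L_a, L_b \in \mca{L}^2$ and, for each $i$, minimal $(S^1, L_a)$- and $(S^1, L_b)$-routes $R_i^a, R_i^b$ from $x_i$ to $y_i$. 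Minimality of the routes together with the anticomplete-to-$S^1 \setminus \{x_i, y_i\}$ property of their interiors and the plainness of $\mca{L}^2$ make $H_i := R_i^a \cup R_i^b$ an induced cycle of length at least $2(o+4)$; $(o+2)$-ampleness forces the intervals $(R_i^a)^*$ and $(R_j^a)^*$ on $L_a$ (and likewise on $L_b$) to be separated by at least $o+1$ vertices for $i \neq j$, so $H_1, \ldots, H_c$ are pairwise disjoint and pairwise anticomplete, contradicting $(c,o)$-perforation.

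Thus $\mathsf{T}$ has a vertex cover $X \subseteq S^1$ with $|X| \leq 2(c-1)$, and a route-shortening argument (using that $Y := S^1 \setminus X$ is stable in $\mathsf{T}$, so any minimal counterexample to the claim would split into a strictly smaller sub-route between two vertices of $Y$) shows that for distinct $x, y \in Y$, every $(S^1, L)$-route between them has interior adjacent to some vertex of $X$. A further Theorem~\ref{productramsey}-style refinement (product Ramsey across $\mca{L}^2$ and $\binom{Y}{2}$) uniformises which subset $X_0 \subseteq X$ of vertices is guaranteed to have a neighbour in the interior of every minimal route between pairs, across all remaining pairs and paths; $X_0$ is non-empty by the previous claim. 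Fixing $x_0 \in X_0$ and applying the inductive hypothesis to the resulting refined sub-constellation yields an $(o+2)$-ample interrupted ordered $(s-1)$-asterism $\mf{a}'$ inside some remaining path $L_*$, with $S_{\mf{a}'}$ contained in the refined vertex subset of $Y$. The uniform interrupting property then makes $x_0$ a cherry on top of $(S_{\mf{a}'}, L_*)$, and Lemma~\ref{lem:interrupted_top}, applied after restricting $L_{\mf{a}'}$ to an appropriate $((S^1, L_*), x_0, s-1)$-candidate, promotes this to cherry-status on $\mf{a}'$; hence $\cher(\mf{a}', x_0)$ is the desired interrupted ordered $s$-asterism.

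The hardest point is verifying that $x_0$ really is a cherry on top of $(S_{\mf{a}'}, L_*)$: the Ramsey colouring controls the internal open pieces (interiors of minimal routes), but the two external $(S_{\mf{a}'}, L_{\mf{a}'})$-pieces and the boundary condition that $x_0$ be anticomplete to $\partial L_{\mf{a}'}$ both demand additional care. I anticipate handling these by an initial truncation of each $L \in \mca{L}^2$ that removes $X$'s neighbourhoods from the boundary zones, together with a final pigeonhole over the external slots; these refinements, together with the iterated Ramsey steps, will dictate the (tower-type) dependence of $\sigma$ and $\lambda$ on $s$.
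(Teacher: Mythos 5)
Your proposal tracks the paper's high-level strategy faithfully through the first several stages: induction on $s$, the $(o+2)$-ampleness preprocessing via Theorem~\ref{dvorak} and Lemma~\ref{lem:nosubdivision_ample}, the Ramsey uniformisation of transition graphs, the matching/vertex-cover dichotomy in $\mathsf{T}$, and the intent to realise $x_0$ as a cherry and finish with Lemma~\ref{lem:interrupted_top}. That much matches the paper.

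However, there is a genuine gap at exactly the point you flag as "the hardest point," and your proposed fix does not close it. The problem: if you apply the inductive hypothesis to a sub-constellation whose paths are (truncations of) the original paths $L\in\mca{L}^2$, the resulting $(s-1)$-asterism $\mf{a}'$ sits inside some $L_*$, and the \emph{external} open pieces of $(S_{\mf{a}'},L_*)$ run from the ends of $L_*$ out to the first $S_{\mf{a}'}$-neighbours. These external pieces are not route-interiors between two vertices of $Y$, so the Ramsey-uniformised property gives you no control over whether $x_0$ has a neighbour there. Trimming "$X$'s neighbourhoods from the boundary zones" makes this worse, not better: you need $x_0$ to \emph{have} a neighbour in each external piece, not to shrink the piece. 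The paper's resolution is structurally different from a truncation: after fixing $y\in Y$ and a pair $\{z_i,z_i'\}\subseteq Z\subseteq S\setminus X$, it chooses a route $Q_{i,L}$ from $z_i$ to $z_i'$ whose interior $R_{i,L}$ has $y$ as the \emph{unique} vertex of $Y$ with a neighbour (using the minimality built into $\Phi_L$), then applies Lemma~\ref{lem:bundle} to the bundles $\mf{b}_i=(\{z_i,z_i'\},\{R_{i,L}\}_L)$ — ruling out the anticomplete-bundle outcome via $c$ disjoint anticomplete cycles — and runs the inductive hypothesis on a plain constellation living inside the $R_{i,L}$'s, with $S'\subseteq Z\setminus\{z_i,z_i'\}$. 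Because the ends of $R_{i,L}$ are the $L$-neighbours of $z_i$ and $z_i'$, \emph{every} open $\mf{a}^+|S_{\mf{a}'}$-piece — external ones included — is the interior of an $\mf{f}_L$-route between two distinct vertices of $Z$, so $y$ has a neighbour in it by the uniform Ramsey conclusion, and the \ref{CH1}/\ref{CH2} checks go through. Without passing to the $R_{i,L}$'s (and without the bundle-lemma step that both supplies the constellation and disposes of the cycle alternative), the argument for the external pieces — and hence for $x_0$ being a cherry — does not close.
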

 \begin{proof}
Let $c,o,t\in \poi$ be fixed. Let $H$ be the disjoint union of $c$ cycles, each of length exactly $o+2$ (so $H$ is the unique $2$-regular graph, up to isomorphism, with exactly $c$ components, each on $o+2$ vertices). Let $m=m(H,o+2,t)$ be as in Theorem~\ref{dvorak} (note that here $m$ only depends on $c,o$ and $t$). Let $\rho(\cdot,\cdot,\cdot)$ be as in Theorem~\ref{multiramsey} and let $\beta(\cdot,\cdot,\cdot,\cdot,\cdot,\cdot)$ be as in Lemma~\ref{lem:bundle}.

Consider the following recursive definition for $\sigma$ and $\lambda$. Let $\sigma(c,o,0,t)=0$ and $\lambda(c,o,0,t)=1$. For every $s\in \poi$, assuming $\sigma'=\sigma(c,o,s-1,t)$ and $\lambda'=\lambda(c,o,s-1,t)$ are defined, let
\medskip
$$\sigma_3=\sigma_3(c,o,s,t)=\beta(2,2,\lambda',c,1,\sigma',t);$$
$$\sigma_2=\sigma_2(c,o,s,t)=\rho(2\sigma_3,2,2^{2c}-1);$$ 
$$\sigma_1=\sigma_1(c,o,s,t)=2c+\sigma_2;$$
$$\lambda_2=\lambda_2(c,o,s,t)=2^{2c+1}\sigma_3(2\lambda'-1)\sigma_2^{2\sigma_3};$$
$$\lambda_1=\lambda_1(c,o,s,t)=2^{\sigma_1^2}\lambda_2.$$
\medskip
Now, define
$$\sigma=\sigma(c,o,s,t)=\psi(\lambda_1,m,{\sigma_1});$$ $$\lambda=\lambda(c,o,s,t)=\lambda_1+m^{2}-1;$$
where $\psi=\psi(\cdot,\cdot,\cdot)$ is as in Lemma~\ref{lem:nosubdivision_ample}. We will prove, by induction on $s$, that the above values of $\sigma$ and $\lambda$ satisfy Lemma~\ref{lem:getinterrupted}. If $s=0$, then Lemma~\ref{lem:getinterrupted}\ref{lem:getinterrupted_b} is immediately seen to hold. Assume that $s\geq 1$.

Let $G$ be a $(c,o)$-perforated graph and let $\mf{c}$ be a plain $(\sigma,\lambda)$-constellation in $G$. Suppose that Lemma~\ref{lem:getinterrupted}\ref{lem:getinterrupted_a} does not hold, that is, $G$ contains neither $K_t$ nor $K_{t,t}$. Since $G$ is $(c,o)$-perforated, it follows that $G$ contains no induced subgraph isomorphic to a subdivision of $H$. Therefore, by Theorem~\ref{dvorak} and the choice of $m$, it follows that $G$ contains no subgraph isomorphic to a $(\leq o+2)$-subdivision of $K_m$. This, along with the choices of $\sigma$ and $\lambda$, as well as Lemma~\ref{lem:nosubdivision_ample}\ref{lem:nosubdivision_ample_b}, implies that there exists $S\subseteq S_{\mf{c}}$ with $|S|=\sigma_1$ and $\mca{L}_1\subseteq \mca{L}_{\mf{c}}$ with $|\mca{L}_1|=\lambda_1$ such that for every $L\in \mca{L}_1$, $\mf{f}_L=(S,L)$ is an $(o+2)$-ample $\sigma_1$-asterism in $G$. In particular, $\mf{f}=(S,\mca{L}_1)$ is a plain $(\sigma_1,\lambda_1)$-constellation in $G$.

Since $|\mca{L}_1|=\lambda_1=2^{\sigma_1^2}\lambda_2$, it follows that:

\sta{\label{sametransition}There exists $\mca{L}_2\subseteq \mca{L}_1$ with $|\mca{L}_2|=\lambda_2$ as well as $E_2\subseteq \binom{S}{2}$, such that for every $L\in \mca{L}_2$, we have $E(\mathsf{T}_{\mf{f}_L})=E_2$.}

Henceforth, let $\mca{L}_2$ and $E_2$ be as in \eqref{sametransition}. Let $\mathsf{T}$ be the graph with $V(\mathsf{T})=S$ and $E(\mathsf{T})=E_2$.  Then \eqref{sametransition} implies that $\mathsf{T}_{\mf{f}_L}=\mathsf{T}$ for every $L\in \mca{L}_2$. We deduce:

\sta{\label{transitionmagic} $\mathsf{T}$ does not have a matching of cardinality $c$.}

Suppose for a contradiction that there exists a matching  $\{x_1x_1',\ldots, x_cx_c'\}\subseteq E(\mathsf{T})$ of cardinality $c$ in $\mathsf{T}$. Since $|\mca{L}_2|=\lambda_2\geq 2$, we may choose two distinct paths $L_1,L_2\in \mca{L}_2$. Then we have $\{x_1x_1',\ldots, x_cx_c'\}\subseteq E(\mathsf{T}_{\mf{f}_{L_1}})=E(\mathsf{T}_{\mf{f}_{L_2}})$. Also,  since $\mf{f}_{L_1}$ and $\mf{f}_{L_2}$ are both $(o+2)$-ample, $\mf{f}$ is plain, and from the definition of the transition graph, it follows that for every $i\in \{1,2\}$ and $j\in [c]$, there exists an $\mf{f}_{L_i}$-route $R_{i,j}$ of length at least $o+4$ from $x_j$ to $x_j'$, such that for all distinct $j,j'\in [c]$, $H_j=R_{1,j}\cup R_{2,j}$ and $H_j'=R_{1,j'}\cup R_{2,j'}$ are disjoint and anticomplete cycles in $G$, each of length at least $2o+8$. In other words, $\{H_j:j\in [c]\}$ is a collection of $c$ pairwise disjoint and anticomplete cycles in $G$, each of length at least $2o+8$ (see Figure~\ref{fig:matchingmagic}). But this violates the assumption that $G$ is $(c,o)$-perforated, and so proves \eqref{transitionmagic}.
\begin{figure}[t!]
    \centering
\includegraphics[scale=0.9]{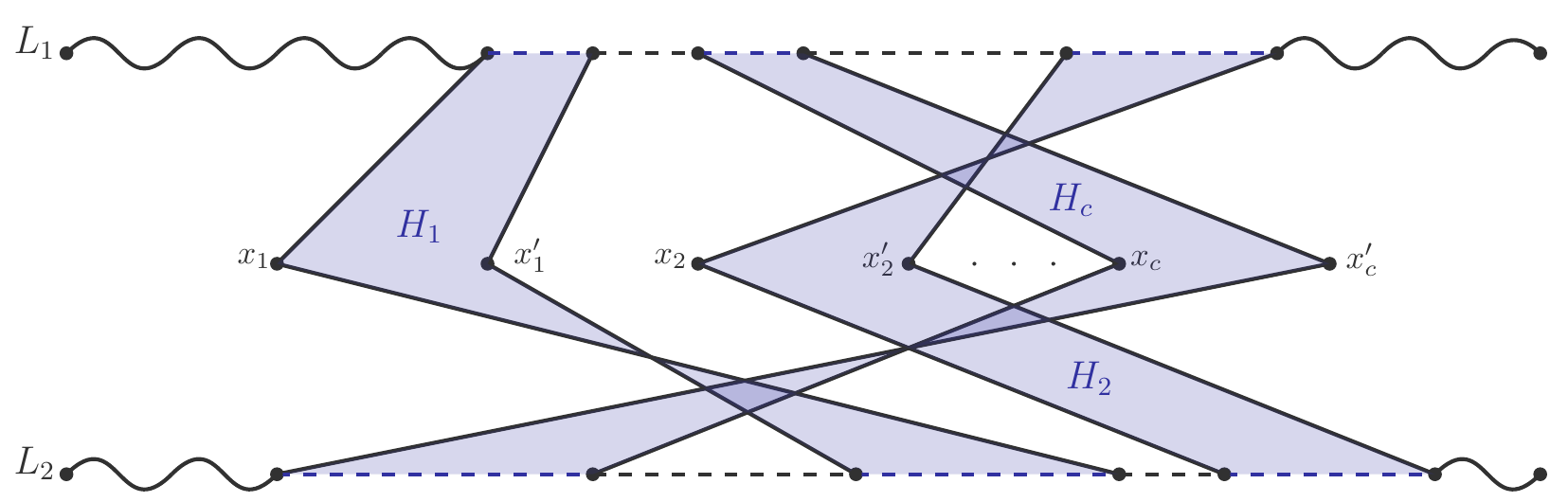}
    \caption{Proof of \eqref{transitionmagic} (each dashed segment represents a path of length at least $o+1$ and each squiggly segment depicts a path of arbitrary yet non-zero length).}
    \label{fig:matchingmagic}
\end{figure}
\medskip

By \eqref{transitionmagic}, there exists a vertex cover $X\subseteq S=V(\mathsf{T})$ for $\mathsf{T}$ with  $|X|<2c$. Consequently, $S\setminus X=V(\mathsf{T})\setminus X$ is a stable set in $\mathsf{T}$. Moreover, we have:

\sta{\label{vertexcover} Let $L\in \mca{L}_2$. Then for every $\mf{f}_L$-route $R$, there exists a vertex $x\in X$ which has a neighbor in $R^*$.}

Suppose not. Then we may pick $L\in \mca{L}_2$ and a minimal $\mf{f}_L$-route $R$ such that $X$ is anticomplete to $R^*$. Let $z,z'\in S$ be the ends of $R$; so $z,z'\in S\setminus X=V(\mathsf{T})\setminus X$. Since $\mf{f}_L$ is $(o+2)$-ample and from the minimality of $R$, it follows that $S\setminus (X\cup \{z,z'\})$ is anticomplete to $R^*$. We conclude that $S\setminus \{z,z'\}=V(\mathsf{T}_{\mf{f}_L})\setminus \{z,z'\}$ is anticomplete to $R^*$. But then from the definition of the transition graph, it follows that $zz'\in E(\mathsf{T}_{\mf{f}_L})=E(\mathsf{T})$, a contradiction with the fact that $V(\mathsf{T})\setminus X$ is a stable set in $\mathsf{T}$. This proves \eqref{vertexcover}.
\medskip

In view of \eqref{vertexcover}, for every $L\in \mca{L}_2$ and all distinct $z,z'\in S\setminus X$, we may choose a minimal non-empty subset $\Phi_{L}(\{z,z'\})$ of $X$ such that for every $\mf{f}_L$-route $R$ from $z$ to $z'$, there exists a vertex $x\in \Phi_{L}(\{z,z'\})$ which has a neighbor in $R^*$. It follows that, for every $L\in \mca{L}_2$, the map $\Phi_{L}: \binom{S\setminus X}{2}\rightarrow 2^{X}\setminus \{\emptyset\}$ is well-defined. Since $|S\setminus X|=\sigma_1-2c=\sigma_2=\rho(2\sigma_3,2,2^{2c}-1)$, applying Theorem~\ref{multiramsey} yields the following:

\sta{\label{presameminimalcover} For every $L\in \mca{L}_2$, there exists $Y_L\subseteq X$ and $Z_{L}\subseteq S\setminus X$ such that:
\begin{itemize}
    \item we have $Y_{L}\neq \emptyset$ and $|Z_{L}|=2\sigma_3$; and
    \item for all distinct $z,z'\in Z_{L}$, we have $\Phi_{L}(\{z,z'\})=Y_{L}$.
\end{itemize}}

Combining \eqref{presameminimalcover} with the fact that $|\mca{L}_2|=\lambda_2=2^{2c+1}\sigma_3(2\lambda'-1)\sigma_2^{2\sigma_3}$, it follows from a pigeonhole argument that for several choices of $L\in \mca{L}_2$ for which the sets $Y_L$ associated with them are all the same, and so are the sets $Z_L$. More precisely, we deduce:

\sta{\label{sameminimalcover}There exist $\mca{L}_3\subseteq \mca{L}_2$, $Y\subseteq X$ and $Z\subseteq S\setminus X$, such that:
\begin{itemize}
    \item we have $|\mca{L}_3|=\sigma_3(4\lambda'-2)$, $Y\neq \emptyset$ and $|Z|=2\sigma_3$; and
    \item for every $L\in \mca{L}_3$ and all distinct $z,z'\in Z$, we have $\Phi_{L}(\{z,z'\})=Y$.
\end{itemize}}

By the first bullet of \eqref{sameminimalcover}, we may fix a partition $\{\mca{L}_3^i:i\in [\sigma_3]\}$ of $\mca{L}_3$ into $(4\lambda'-2)$-subsets, a vertex $y\in Y$, and a partition $\{\{z_i,z_i'\}:i\in [\sigma_3]\}$ of $Z$ into $2$-subsets. By the second bullet of \eqref{sameminimalcover}, for every $L\in \mca{L}_3$ and every $i\in [\sigma_3]$, we have $y\in Y=\Phi_{L}(\{z_i,z_i'\})$. This, together with the minimality of $\Phi_{L}(\{z_i,z_i'\})$, implies that for every $L\in \mca{L}_3$ and $i\in [\sigma_3]$, there exists an $\mf{f}_L$-route $Q_{i,L}$ from $z_i$ to $z_i'$ such that, writing $R_{i,L}=Q^*_{i,L}$, we have that $y$ is the only vertex in $Y$ with a neighbor in $R_{i,L}$.

Now, for every $i\in [\sigma_3]$, let $\mf{b}_i$ be the $(2,4\lambda'-2)$-bundle in $G$ with $S_{\mf{b}_i}=\{z_i,z_i'\}$ and  $\mca{L}_{\mf{b}_i}=\{R_{i,L}:L\in \mca{L}_3^i\}$. We claim that:

\sta{\label{vertexoflargeoutdegree} There exist $i\in [\sigma_3]$, $S'\subseteq Z\setminus \{z_i,z_i'\}$ with $|S'|=\sigma'$ and $\mca{G}\subseteq \mca{L}_{\mf{b}_i}$ with $|\mca{G}|=\lambda'$ such that $(S',\mca{G})$ is a plain $(\sigma',\lambda')$-constellation in $G$.}

Recall that $\sigma_3=\beta(2,2,\lambda',c,1,\sigma',t)$. Thus, we may apply Lemma~\ref{lem:bundle} to $\mf{B}=\{\mf{b}_i: i\in [\sigma_3]\}$. Since $G$ is assumed not to contain $K_t$ or $K_{t,t}$, it follows that Lemma~\ref{lem:bundle}\ref{lem:bundle_a} does not hold. Assume that Lemma~\ref{lem:bundle}\ref{lem:bundle_c} holds. Then there exists $I\subseteq [\sigma_3]$ with $|I|=c$ as well as $L^i_1,L^i_2\in \mca{L}_{3}^i$ for each $i\in I$, such that for all distinct $i,j\in I$, $\{z_i,z_i'\}$ is anticomplete to $\{z_j,z_j'\}\cup R_{j,L^j_1}\cup R_{j,L^j_2}$. Since $\mca{L}_3$ is a plain polypath and $\mf{f}_L$ is $(o+2)$-ample for every $L\in \mca{L}_3$, it follows that for all distinct $i,j\in I$, $Q_{i,L^i_1}\cup Q_{i,L^i_2}$ and $Q_{j,L^j_1}\cup Q_{j,L^j_2}$ are two disjoint and anticomplete cycles in $G$, each of length at least $2o+8$. In other words, $\{Q_{i,L^i_1}\cup Q_{i,L^i_2}:i\in I\}$ is a collection of $c$ pairwise disjoint and anticomplete cycles in $G$, each of length at least $2o+8$, a contradiction with $G$ being $(c,o)$-perforated. It follows that Lemma~\ref{lem:bundle}\ref{lem:bundle_b} holds, that is, there exist $i\in [\sigma_3]$, $S'\subseteq Z\setminus S_{\mf{b}_i}=Z\setminus \{z_i,z_i'\}$ with $|S'|=\sigma'$ and $\mca{G}\subseteq \mca{L}_{\mf{b}_i}$ with $|\mca{G}|=\lambda'$ such that $(S',\mca{G})$ is a $(\sigma',\lambda')$-constellation in $G$. In addition, $(S',\mca{G})$ is plain, because $\mf{f}$ is. This proves \eqref{vertexoflargeoutdegree}.
\medskip

Henceforth, let us fix $i\in [\sigma_3]$  $S'\subseteq Z\setminus \{z_i,z_i'\}$ and $\mca{G}\subseteq \mca{L}_{\mf{b}_i}$ as given by \eqref{vertexoflargeoutdegree}. Now we apply the induction hypothesis to show that:

\sta{\label{ihsurgery} For some $L\in \mca{L}_3^i$, there exists an $(o+2)$-ample, interrupted ordered $(s-1)$-asterism $\mf{a}'$ in $G$ such that
$S_{\mf{a}'}\subseteq S'$ and $L_{\mf{a}'}\subseteq R_{i,L}$.}

By \eqref{vertexoflargeoutdegree}, $\mf{c}'=(S',\mca{G})$ is a plain $(\sigma',\lambda')$-constellation in $G$. Since $G$ is assumed not to contain $K_t$ or $K_{t,t}$, by the induction hypothesis applied to $\mf{c}'$, there exists an $(o+2)$-ample, interrupted ordered $(s-1)$-asterism $\mf{a}'$ in $G$ such that
$S_{\mf{a}'}\subseteq S_{\mf{c}'}=S'$, and for some $L'\in \mca{L}_{\mf{c}'}=\mca{G}\subseteq \mca{L}_{\mf{b}_i}$, we have $L_{\mf{a}'}\subseteq  L'$. From the definition of $\mf{b}_i$, it follows that for some $L\in \mca{L}_3^i$, we have $L_{\mf{a}'}\subseteq R_{i,L}$. This proves \eqref{ihsurgery}.
\medskip

Let $L\in \mca{L}_3^i$ be as in \eqref{ihsurgery}. Then we may choose an $(o+2)$-ample, interrupted ordered $(s-1)$-asterism $\mf{a}'$ in $G$ such that
$S_{\mf{a}'}\subseteq S'$, $L_{\mf{a}'}\subseteq R_{i,L}$, and  $L_{\mf{a}'}$ is maximal with respect to inclusion. By \eqref{vertexoflargeoutdegree}, every vertex in $S'$ has a neighbor in $R_{i,L})$ in $G$. Also, recall that the vertex $y\in X\subseteq S\setminus Z\subseteq S\setminus S'$ has a neighbor in $R_{i,L}=Q^*_{i,L}$. In fact, since $\mf{f}_L$ is $(o+2)$-ample, it follows that every vertex in $S'\cup \{y\}$ has a neighbor in $R_{i,L}^*$ and $S'\cup \{y\}$ is anticomplete to the ends of $\partial R_{i,L}$. As a result $\mf{a}^+=(S'\cup \{y\}, R_{i,L})$ is a $(\sigma'+1)$-asterism in $G$ such that $S_{\mf{a}'}\subseteq S'= S_{\mf{a}^+}\setminus \{y\}$ and  $L_{\mf{a}'}\subseteq R_{i,L}=L_{\mf{a}^+}$.
We further deduce that:

\sta{\label{st:y_to_open_pieces} $\mf{a}'$ is a $(\mf{a}^+,y,s-1)$-candidate in $G$ and $y$ is a cherry on top of $\mf{a}^+|S_{\mf{a}'}$ in $G$.}

From the maximality of $L_{\mf{a}'}\subseteq R_{i,L}$, it follows immediately that $\mf{a}^+$, $\mf{a}'$ and $y$ satisfy \ref{CA}, and so $\mf{a}'$ is a $(\mf{a}^+,y,s-1)$-candidate in $G$. Let us now prove that $y$ is a cherry on top of $\mf{a}^+|S_{\mf{a}'}$ in $G$. We need to show that $\mf{a}^+|S_{\mf{a}'}$ and $y$ satisfy \ref{CH1} and \ref{CH2}. Observe that \ref{CH1} follows from the fact that $L_{\mf{a}^+|S_{\mf{a}'}}=L_{\mf{a}^+}$. To see \ref{CH2}, let $P$ be an open $\mf{a}^+|S_{\mf{a}'}$-piece. It follows that $P$ is the interior an $\mf{f}_L$-route between two vertices in $S'\cup \{z_i,z'_i\}$. On the other hand, since $S'\cup \{z_i,z'_i\}\subseteq Z$ and $L\in \mca{L}_3^i\subseteq \mca{L}_3$, it follows from the second bullet of \eqref{sameminimalcover} that some vertex in $Y$ has a neighbor in $P$. But then from $P\subseteq R_{i,L}\subseteq Q_{i,L}$ and the choice of $Q_{i,L}$, we conclude that $y$ has a neighbor in $P$. This proves \eqref{st:y_to_open_pieces}.
\medskip

We can now finish the proof. Note that $\mf{a}^+$ is $(o+2)$-ample, because $\mf{f}_L$ is. Therefore, since $o+2\geq 3$, in view of \eqref{st:y_to_open_pieces}, we can apply Lemma~\ref{lem:interrupted_top} to $\mf{a}^+, \mf{a}'$ and $y$, and deduce that $\mf{a}=\cher(\mf{a}',y)$ is an $(o+2)$-ample, interrupted ordered $s$-asterism in $G$ with $S_{\mf{a}}\subseteq S_{\mf{a}^+}=S'\cup \{y\}\subseteq S\subseteq S_{\mf{c}}$ and  $L_{\mf{a}}=L_{\mf{a}^+}=R_{i,L}\subseteq L$ where $L\in \mca{L}_3^i\subseteq \mca{L}_3\subseteq \mca{L}_2\subseteq \mca{L}_1\subseteq \mca{L}_{\mf{c}}$. Hence, Lemma~\ref{lem:getinterrupted}\ref{lem:getinterrupted_b} holds. This completes the proof of Lemma~\ref{lem:getinterrupted}.
\end{proof}

Now Theorem~\ref{thm:const_to_fire} becomes almost immediate:

\begin{proof}[Proof of Theorem~\ref{thm:const_to_fire}] \sloppy Let
 $\Sigma=\Sigma(c,o,s,t)=\sigma(c,o,s^c,t)$ and $\Lambda=\Lambda(c,o,s,t)=\lambda(c,o,s^c,t)$,  where $\sigma(\cdot,\cdot,\cdot,\cdot)$ and $\lambda(\cdot,\cdot,\cdot,\cdot)$ are as in Lemma~\ref{lem:getinterrupted}. Let $G$ be a $(c,o)$-perforated graph and let $\mf{c}$ be a plain $(\Sigma,\Lambda)$-constellation in $G$. Assume that $G$ does not contain $K_t$ or $K_{t,t}$. By Lemma~\ref{lem:getinterrupted}\ref{lem:getinterrupted_b}, there exists an $(o+2)$-ample, interrupted ordered $s^c$-asterism in $G$.  Since $o+2\geq 3$, by Theorem~\ref{thm:interrupted_to_occultation},  there is a full $(s,o)$-occultation in $G$. This completes the proof of Theorem~\ref{thm:const_to_fire}.
 \end{proof}

\section{Patch \& Match}\label{sec:patch}
Here we obtain the last main ingredient in the proof of Theorem~\ref{mainthmgeneral}: 

\begin{theorem}\label{thm:block_to_constellation}
For all   $c,l,o,s,t\in \poi$, there is a constant $
       \Omega=\Omega(c,l,o,s,t)\in \poi$ with the following property. Let $G$ be a $(c,o)$-perforated graph. Assume that there exists a strong $\Omega$-block in $G$. Then one of the following holds.
       \begin{enumerate}[\rm (a)]
\item\label{thm:block_to_constellation_a} $G$ contains either $K_t$ or $K_{t,t}$.
\item\label{thm:block_to_constellation_b} There exists a plain $(s,l)$-constellation in $G$.
      \end{enumerate}
  \end{theorem}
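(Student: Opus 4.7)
The plan is to convert the strong block into a large collection of pairwise disentangled plain bundles and then invoke Theorem~\ref{thm:bundlethm}. The three outcomes of Theorem~\ref{thm:bundlethm} will correspond, respectively, to outcome~\ref{thm:block_to_constellation_a}, to the desired plain constellation of outcome~\ref{thm:block_to_constellation_b}, and to a contradiction with the $(c,o)$-perforation through the construction of $c$ pairwise disjoint and anticomplete induced cycles in $G$, each of length more than $o+2$.

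First, I would assume $G$ is $(K_t,K_{t,t})$-free, since otherwise outcome~\ref{thm:block_to_constellation_a} already holds. Letting $H$ denote the disjoint union of $c$ cycles each of length $o+2$, the $(c,o)$-perforation of $G$ gives that $G$ contains no induced subdivision of $H$, and so by Theorem~\ref{dvorak} there exists $m=m(c,o,t)\in\poi$ such that $G$ contains no $(\leq o+2)$-subdivision of $K_m$ as a subgraph. Choosing a sufficiently large $k$ and applying Theorem~\ref{distance} with $d=o+2$, one extracts from $B$ a set $A\subseteq V(G)$ and a subset $B'\subseteq B\setminus A$ that is simultaneously a strong $k$-block and an $(o+2)$-stable set in $G\setminus A$. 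Replacing $G$ with $G\setminus A$, which is still $(c,o)$-perforated, $(K_t,K_{t,t})$-free, and still free of a $(\leq o+2)$-subdivision of $K_m$ as a subgraph, every two distinct vertices of $B'$ are joined by at least $k$ pairwise internally disjoint paths, each of length more than $o+2$.

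Next, I would partition $B'$ into $\lfloor k/2\rfloor$ disjoint pairs $\{a_i,b_i\}$ and set $\mf{b}_i=(\{a_i,b_i\},\mca{P}_{\{a_i,b_i\}})$, producing $(2,k)$-bundles that are pairwise disentangled thanks to the vertex-disjointness clause in the definition of a strong block. These bundles may however fail to be plain, so within each $\mca{P}_{\{a_i,b_i\}}$ I would consider the auxiliary graph $H_i$ whose vertex set is $\mca{P}_{\{a_i,b_i\}}$ and in which two paths are adjacent whenever some $G$-edge joins their interiors. By Theorem~\ref{multiramsey}, for $k$ large enough, either $H_i$ admits an independent set of size $\theta$, where $\theta$ is the constant required by Theorem~\ref{thm:bundlethm}, giving the desired plain sub-polypath, or $H_i$ contains a clique of arbitrarily large size $r$. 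In the latter case, one has $r$ internally disjoint paths between $a_i$ and $b_i$ with an edge between every two of them, and I would show that this very dense local structure must contain a $(\leq o+2)$-subdivision of $K_m$ as a subgraph, contradicting the reduction of the first step.

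With plainness achieved for every bundle, I would finally apply Theorem~\ref{thm:bundlethm} to the family of $\lfloor k/2\rfloor$ plain $(2,\theta)$-bundles with $a=c$, $b=2$, $h=2$, and the parameters $l,s$ of Theorem~\ref{thm:block_to_constellation}. Outcome~\ref{thm:bundlethm_a} contradicts the $(K_t,K_{t,t})$-free hypothesis; outcome~\ref{thm:bundlethm_b} is exactly the plain $(s,l)$-constellation we seek; outcome~\ref{thm:bundlethm_c} yields $c$ bundles each retaining at least two plain paths between the same pair of endpoints, whose union is an induced cycle of length greater than $2(o+2)>o+2$, and the $c$ cycles thereby obtained are pairwise disjoint and anticomplete, contradicting the $(c,o)$-perforation of $G$. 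The main obstacle, as indicated, is the clique case in the plainness step: extracting a $(\leq o+2)$-subdivision of $K_m$ from $r$ pairwise edge-joined internally disjoint paths between two vertices. This appears to require a further Ramsey argument to arrange the endpoints of the pairwise edges along each path so that the $\binom{m}{2}$ subdivision paths between suitably chosen branch vertices can be routed through pairwise internally disjoint short segments of the paths and the pairwise edges.
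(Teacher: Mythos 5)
Your overall shape matches the paper's: use Theorem~\ref{distance} to refine the strong block into a distance-stable one, form bundles around pairs of block vertices, make the bundles plain, and apply Theorem~\ref{thm:bundlethm} with $a=c$, $b=2$, $h=2$ to reach outcome~\ref{thm:block_to_constellation_a}, outcome~\ref{thm:block_to_constellation_b}, or a forbidden family of long anticomplete cycles. The difference is how plainness is achieved, and this is where the proposal breaks down. You claim that if the intersection graph of the paths within a bundle contains a large clique --- $r$ pairwise internally disjoint induced paths between $a_i$ and $b_i$, with an edge between the interiors of every two of them --- then $G$ has a $(\leq o+2)$-subdivision of $K_m$ as a subgraph. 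This is false. Take $r$ paths $P_j = a\dd v_{j,1}\dd\cdots\dd v_{j,r-1}\dd b$ of length $r$, and for each pair $\{i,j\}$ add exactly one cross-edge $v_{i,j}v_{j,i}$. Every internal vertex of this graph has degree at most $3$, and only $a$ and $b$ have degree exceeding $3$; but a subdivision of $K_5$ (and hence of any $K_m$ with $m\geq 5$) requires at least five vertices of degree at least $4$. So the clique case gives no subdivision of $K_m$ at all, bounded or not, and a further Ramsey argument cannot repair this: the obstruction is degree-theoretic. The paper does not attempt this route. Instead it packages each fan of paths as an $(o-1,\eta)$-patch $\mf{p}_i=(\{y_i\},\mca{P}_i^*)$ and proves the substantially harder Lemma~\ref{lem:digraph}, which recurses on the distance parameter and invokes Theorem~\ref{connectifier} to output either a plain match or a plain sub-patch (or else one of the two global outcomes). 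This is where the real content of the proof lies, and it is missing from your argument.

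Two smaller points. First, $(\{a_i,b_i\},\mca{P}_{\{a_i,b_i\}})$ is not a bundle: the paths in $\mca{P}_{\{a_i,b_i\}}$ all share the endpoints $a_i,b_i$, so they do not form a polypath. You need the interiors $\mca{P}_{\{a_i,b_i\}}^*$, as the paper does with $\mca{L}_i=\mca{P}_i^*$. Second, the paper's bundles $\mf{b}_i$ are not simply $(\{x_i,y_i\},\mca{P}_i^*)$ but are obtained from the output of Lemma~\ref{lem:digraph}, with $S_{\mf{b}_i}$ being either $\{x_i\}$ or $\{x_i\}$ together with the apex of the resulting sub-patch; the cycles built in outcome~\ref{thm:bundlethm_c} close up through those special vertices rather than directly through $x_i$ and $y_i$.
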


We need a couple of definitions and a lemma. Let $G$ be a graph, let $d \in \poi \cup \{0\}$ and let $r\in \poi$. For $X\subseteq V(G)$, by a \textit{$(d,r)$-patch for $X$ in $G$} we mean a $(1,r)$-bundle $\mf{p}$ in $G$ where:
\begin{enumerate}[(P1), leftmargin=19mm, rightmargin=7mm]
     \item\label{P1} we have $S_{\mf{p}}\subseteq V(G)\setminus V(\mca{L}_{\mf{p}})$;
    \item\label{P2} every path $L\in \mca{L}_{\mf{p}}$ has length at least $d$; and
    \item\label{P3} for every $L\in \mca{L}_{\mf{p}}$, one may write $\partial L=\{x_L,y_L\}$ such that $L\cap X=\{x_L\}$ and $N_L(S_\mf{p})=\{y_L\}$.
 \end{enumerate}
 Also, by a \textit{$(d,r)$-match for $X$ in $G$} we mean an $r$-polypath $\mca{M}$ in $G$ such that 

 \begin{enumerate}[(M1), leftmargin=19mm, rightmargin=7mm]
 \item\label{M1} every path $L\in \mca{M}$ has length at least $d$; and
 \item\label{M2} $V(\mca{M})\cap X=\partial \mca{M}$.
 \end{enumerate}
 \begin{figure}[t!]
    \centering
    \includegraphics[scale=0.9]{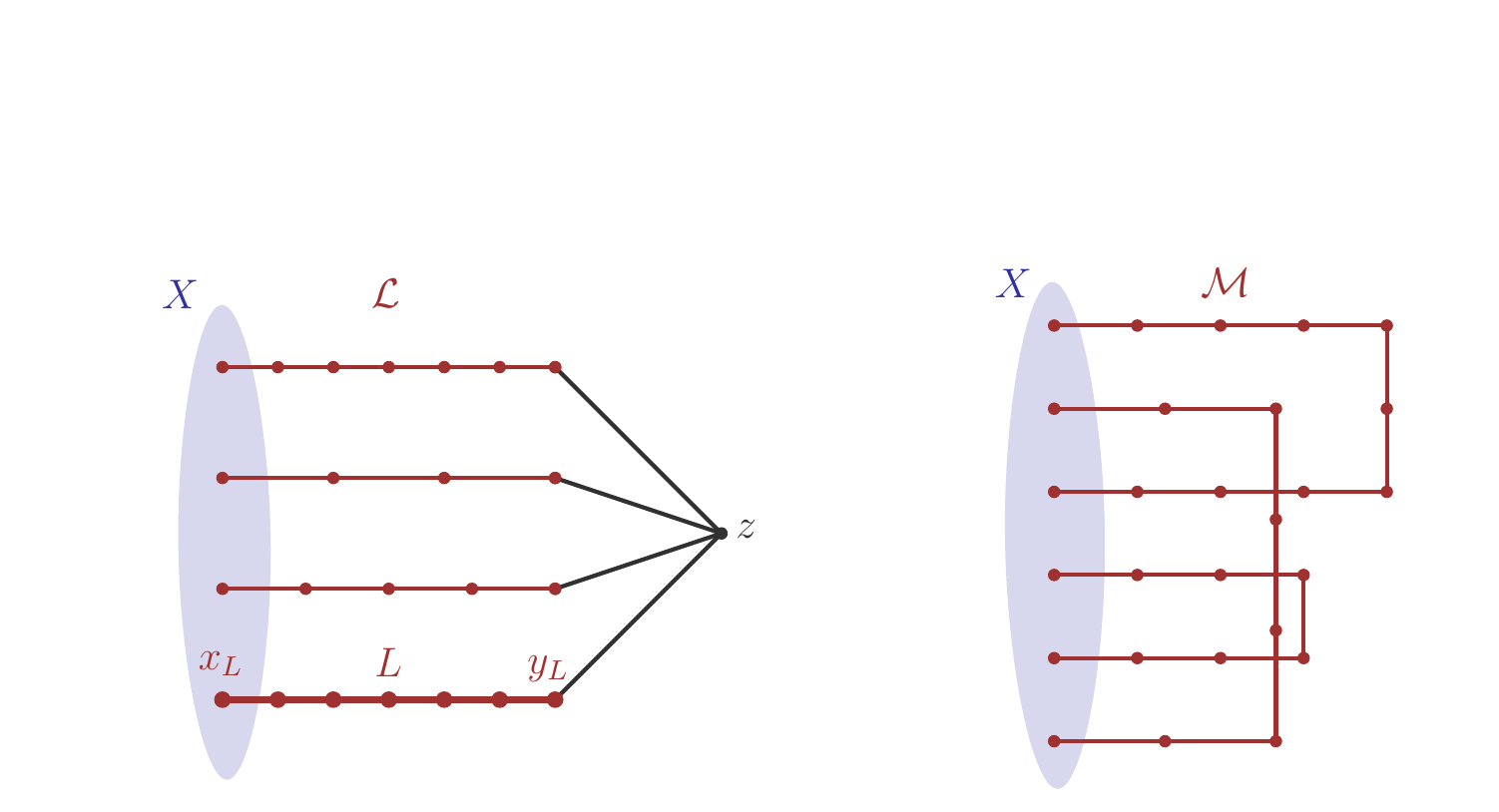}
    \caption{Left: A $(3,4)$-patch $(\{z\},\mca{L})$ for $X$ in $G$. Right: A $(7,3)$-match $\mca{M}$ for $X$ in $G$.}
    \label{fig:patch&match}
\end{figure}
See Figure~\ref{fig:patch&match}. 

 \begin{lemma}\label{lem:digraph}
       \sloppy For all   $d,s \in \poi \cup \{0\}$ and $l,m,r,r'\in \poi$, there is a constant $
       \eta=\eta(d,l,m,r,r',s)\in \poi$ with the following property. Let $G$ be a graph, let $X\subseteq V(G)$ and let $\mf{p}$ be a $(d,\eta)$-patch for $X$ in $G$. Then one of the following holds.
    \begin{enumerate}[\rm (a)]
        \item\label{lem:digraph_a} $G$ contains a $(\leq d+1)$-subdivision of $K_m$ as a subgraph.
        \item\label{lem:digraph_b} There exists a plain $(s,l)$-constellation in $G$.
         \item\label{lem:digraph_c} There exists a plain $(2(d+1),r)$-match $\mca{M}$ for $X$ in $G$ such that $ V(\mca{M})\subseteq V(\mf{p})$.
        \item\label{lem:digraph_d} There exists a plain $(d,r')$-patch $\mf{q}$ for $X$ in $G$ such that $V(\mf{q}) \subseteq V(\mf{p})$.
    \end{enumerate}
  \end{lemma}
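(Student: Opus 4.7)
The plan is to refine the patch $\mf{p}$ by a sequence of Ramsey-type reductions using Theorems~\ref{multiramsey} and \ref{productramsey}, and then to case-split according to the resulting uniform adjacency pattern between the paths of $\mca{L}_\mf{p}$. Let $z$ denote the unique vertex of $S_\mf{p}$, and for each $L \in \mca{L}_\mf{p}$ write $\partial L = \{x_L, y_L\}$ with $x_L \in X$ and $y_L$ the unique neighbour of $z$ on $L$.

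The first step would be to attach to each pair $\{L, L'\}$ of paths in $\mca{L}_\mf{p}$ a finite ``pattern'' drawn from a bounded set $\mca{W}$: it records whether $L, L'$ are anticomplete in $G$, and if not, it gives a coarse description of the cross-edges between them relative to a constant number of distinguished vertices per path (the endpoints and a few ``quantile'' positions in the interior). Choosing $\eta$ exponentially large in $|\mca{W}|$, Theorem~\ref{multiramsey} yields a subcollection $\mca{L}' \subseteq \mca{L}_\mf{p}$ of controlled size on which all pairs realise the same pattern $W$. If $W$ is the anticomplete pattern, then $\mf{q}=(\{z\},\mca{L}'')$ for any $r'$-subset $\mca{L}''\subseteq\mca{L}'$ is already a plain $(d, r')$-patch for $X$ with $V(\mf{q})\subseteq V(\mf{p})$, giving outcome (d).

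Otherwise, every pair of paths in $\mca{L}'$ is joined by cross-edges at a prescribed location, and we split further. If the cross-edges concentrate on a single distinguished vertex per path (so that, across many paths, a fixed position yields neighbours on many other paths), a second Ramsey step via Theorem~\ref{productramsey} extracts a large set of such vertices uniformly connected to many paths; depending on whether the resulting adjacencies are dense enough to form short connecting routes, we read off either a $(\leq d+1)$-subdivision of $K_m$ (outcome (a)) or a plain $(s, l)$-constellation with these distinguished vertices as its $S$-side (outcome (b)). In the complementary case, cross-edges are spread out, and we pair up $2r$ paths of $\mca{L}'$ into $r$ disjoint pairs $(L_i, L_j)$, each joined by a cross-edge at a controlled interior location. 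For each pair we splice an initial segment of $L_i$, the cross-edge, and a terminal segment of $L_j$ into a single path from $x_i\in X$ to $x_j\in X$; to reach the required length $2(d+1)$, we route exactly one pair through $z$ via $y_i$--$z$--$y_j$ (length at least $d+1+1+d=2(d+1)$) and, for every further pair, incorporate a third ``bridge'' path $L_k\in\mca{L}'$, yielding total length at least $3d+2$. Uniformity of $W$, possibly refined by one more application of Theorem~\ref{productramsey}, ensures the resulting $r$ paths are pairwise anticomplete, producing the plain $(2(d+1), r)$-match of outcome (c).

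The main obstacle is outcome (c): the length requirement $2(d+1)$ exceeds by exactly one edge what a na\"ive concatenation of two patch paths of length $\geq d$ can deliver, so either the $z$-route (usable at most once) or an extra bridge path must be invoked. Simultaneously, the resulting $r$ match paths must be pairwise anticomplete in $G$ even though, by assumption of this sub-case, the underlying $L_i$'s are all pairwise joined by cross-edges; the whole purpose of the pattern-level Ramsey reduction is to confine those cross-edges to regions disjoint from the interiors of the chosen match paths so that the plainness condition can still be enforced.
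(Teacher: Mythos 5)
There is a genuine gap, and it sits exactly where the difficulty of the lemma lies: controlling length and plainness simultaneously. Your outcome~(c) branch splices an initial segment of $L_i$, a cross-edge, and a terminal segment of $L_j$ and claims the deficit against the required length $2(d+1)$ is ``exactly one edge.'' That arithmetic presumes the cross-edge joins the far ($y$-) ends of the two paths and that the concatenation of the two \emph{whole} paths is induced. Neither is forced: a cross-edge may attach to the second vertex of $L_i$ and the second vertex of $L_j$, so the induced $x_{L_i}$--$x_{L_j}$ path in $G[L_i\cup L_j]$ can have length $2$ or $3$ no matter how large $d$ is, and further cross-edges between the two segments destroy inducedness (passing to a shortest induced connection destroys the length bound). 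A bounded-size Ramsey ``pattern'' recorded at a constant number of distinguished or quantile vertices per path cannot localize where cross-edges attach relative to distance $d+1$ from the $X$-ends, so it cannot ``confine those cross-edges to regions disjoint from the interiors of the chosen match paths'' as you assert; in the very sub-case you are in, \emph{every} pair of the chosen paths carries cross-edges, so neither the $r$ match paths of~(c) nor the $l$ paths of the plain constellation in your branch~(b) can be made pairwise anticomplete without passing to subpaths whose mutual anticompleteness is precisely what is unproven. The single route through $z$ has the same problem: $z$ is adjacent to $y_L$ for every $L\in\mca{L}_{\mf{p}}$, so a match path through $z$ generically has edges to the other match paths.

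For contrast, the paper never splices two different patch paths through a cross-edge. It inducts on $d$: Lemma~\ref{lem:bundle} applied to the bundles $(\{x_L\},\{L\})$ either produces (after Lemma~\ref{lem:nosubdivision_ample} and Lemma~\ref{lem:ast_to_syzygy}) a constellation/syzygy living \emph{inside a single path} $L$ --- where the length bound $2(d+1)$ comes from $(d+1)$-ampleness, i.e.\ from excluding a $(\leq d+1)$-subdivision of $K_m$, not from raw path lengths, and disjointness/anticompleteness comes from the syzygy order \ref{SY} --- or it yields $\zeta_d$ paths whose $X$-ends $x_L$ are anticomplete to all the other paths. In the latter case one peels off these ends, applies the statement for $d-1$ to the shortened patch, and re-attaches the ends (this is what adds the two missing units of length for~(c) and one for~(d) while preserving plainness); the base case $d=0$ is handled by Theorem~\ref{connectifier}, which is also the only source of outcome~(d) beyond the trivial all-anticomplete situation. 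Your proposal is missing both of these mechanisms (an ampleness-type length guarantee obtained by excluding outcome~(a), and an induction/connectifier step), and without them the claimed constructions for outcomes~(b) and~(c) do not go through.
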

  \begin{proof}
 Let $l,m,r,r'\in \poi$ and $s \in \poi \cup \{0\}$ be fixed. In order to define $\eta$,
 first we recursively define a sequence $\{\zeta_{d}: d \in \poi \cup \{0\}\}$, as follows. Let $h=\max\{m,4r,r'+1\}$ and let
 $$\zeta_{0}=h^{8h^4}.$$
 Let $\psi=\psi(1,m,(4r)^{l-1}(s+l-1))$ be as in Lemma~\ref{lem:nosubdivision_ample}. For every $d\in \poi$, let
 $$\zeta_{d}=\beta(1,1,1,\zeta_{d-1}, m^{2}, \psi,m^2);$$
 where $\beta(\cdot,\cdot,\cdot,\cdot,\cdot,\cdot,\cdot)$ is as in Lemma~\ref{lem:bundle}. This concludes the definition of $\zeta_d$ for all $d \in \poi \cup \{0\}$.
 
 Now, we claim that for every $d \in \poi \cup \{0\}$, $$\eta(d,l,m,r,r',s)=\zeta_{d+1}$$
satisfies Lemma~\ref{lem:digraph}. Suppose not. Choose $d \in \poi \cup \{0\}$ minimum such that the above value of $\eta(d,l,m,r,r',s)$ fails to satisfy Lemma~\ref{lem:digraph}.

Let $G$ be a graph, let $X\subseteq V(G)$ and let $\mf{p}$ be a $(d,\eta(d,l,m,r,r',s))$-patch for $X$ in $G$. For every $L\in \mca{L}_{\mf{p}}$, let $\partial L=\{x_L,y_L\}$ be as in \ref{P3}. We claim that:

\sta{\label{st:bundleleminpatch} There exist $\mca{P}\subseteq \mca{L}_{\mf{p}}$ with $|\mca{P}|=\zeta_d$ such that for all distinct $L,L'\in \mca{P}$, $x_L$ is anticomplete to $L'$ in $G$.}

For each $L\in \mca{L}_{\mf{p}}$, let $\mf{b}_{L}$ be the $(1,1)$-bundle in $G$ with $S_{\mf{b}_L}=\{x_L\}$ and $\mca{L}_{\mf{b}_L}=\{L\}$. Then $\mf{B}=\{\mf{b}_L:L\in \mca{L}_{\mf{p}}\}$ is a collection of $\eta(d,l,m,r,r',s)=\zeta_{d+1}$ pairwise disentangled $(1,1)$-bundles in $G$. Now, the definition of $\zeta_{d}$ for $d\in \poi$ allows for an application of Lemma~\ref{lem:bundle} to $\mf{B}$, which implies that one of the following holds.
\begin{itemize}
    \item $G$ contains either $K_{m^2}$ or $K_{m^2,m^2}$.
        \item There exist  $\mf{N}\subseteq \mf{B}$ with $|\mf{N}|=m^{2}$ as well as $S\subseteq \bigcup_{\mf{b}\in \mf{B}\setminus \mf{N}} S_{\mf{b}}$ with $|S|=\psi$, such that for every $\mf{b}\in \mf{N}$, there exists $\mca{G}_{\mf{b}}\subseteq \mca{L}_{\mf{b}}$ with $|\mca{G}_{\mf{b}}|=1$ for which $(S, \mca{G}_{\mf{b}})$ is a $(\psi,1)$-constellation in $G$. As a result, $(S,\bigcup_{\mf{b}\in \mf{N}}\mca{G}_{\mf{b}})$ is a $(\psi,m^2)$-constellation in $G$. 
        \item There exist  $\mf{M}\subseteq \mf{B}$ with $|\mf{M}|=\zeta_d$ as well as $\mca{F}_{\mf{b}}\subseteq \mca{L}_{\mf{b}}$ with $|\mca{F}_{\mf{b}}|=1$ for each $\mf{b}\in \mf{M}$, such that for all distinct $\mf{b},\mf{b}'\in \mf{M}$, $S_{\mf{b}}$ is anticomplete to $S_{\mf{b}'}\cup V(\mca{F}_{\mf{b}'})$ in $G$.
    \end{itemize}
If the first bullet above holds, then $G$ contains a $(\leq 1)$-subdivision of $K_m$ as a subgraph, and so Lemma~\ref{lem:digraph}\ref{lem:digraph_a} holds, a contradiction. Assume that the second bullet above holds. Then there exists $\mca{L}\subseteq \mca{L}_{\mf{p}}$ with $|\mca{L}|=m^{2}$ as well as $S\subseteq \{x_L:L\in \mca{L}_{\mf{p}}\setminus \mca{L}\}$ with $|S|=\psi$ such that $\mf{c}=(S,\mca{L})$ is an $(\psi,m^{2})$-constellation in $G$. Now we can apply Lemma~\ref{lem:nosubdivision_ample} to $\mf{c}$. Since Lemma~\ref{lem:digraph}\ref{lem:digraph_a} is assumed not to hold, it follows that Lemma~\ref{lem:nosubdivision_ample}\ref{lem:nosubdivision_ample_b} holds, that is, there exists $S'\subseteq S$ with $|S'|=(4r)^{l-1}(s+l-1)$ and $L\in \mca{L}$ such that $\mf{g}=(S',L)$ is a $(d+1)$-ample $((4r)^{l-1}(s+l-1))$-asterism in $G$. In particular, $\mf{g}$ is $1$-meager. So one may apply Lemma~\ref{lem:ast_to_syzygy} to $\mf{g}$. This time, since Lemma~\ref{lem:digraph}\ref{lem:digraph_b} is assumed not to hold, it follows that Lemma~\ref{lem:ast_to_syzygy}\ref{lem:ast_to_syzygy_a} holds, and so there exists a $4r$-syzygy $\mf{s}$ in $G$ with $S_{\mf{s}}\subseteq S'$ and $L_{\mf{s}}\subseteq L$. Also, $\mf{s}$ is $(d+1)$-ample, because $\mf{g}$ is. Let $u$ be the end of $L_{\mf{s}}\subseteq L$ for which $\mf{s}$ satisfies \ref{SY}. For every $i\in [r]$, let $M_i$ be the shortest path in $G[V(\mf{s}_i)]$ from  $\pi_{\mf{s}}(4i-3)$ to $\pi_{\mf{s}}(4i-1)$ with $M_i^*\subseteq L_{\mf{s}}$; in fact, we have $M_i^*\subseteq L_{\mf{s}}^*\subseteq L^*\subseteq V(G)\setminus X$. Since $\mf{s}$ is $(d+1)$-ample and from \ref{SY}, it follows that $M_1,\ldots, M_r$ are pairwise disjoint and anticomplete, each of length at least $2(d+1)>2d+1$. Thus, $\mca{M}$ is a plain $r$-polypath in $G$ such that every path in $\mca{M}$ has length at least $2(d+1)$. Also, we have $V(\mca{M})\cap X=\{\pi_{\mf{s}}(4i-3),\pi_{\mf{s}}(4i-1):i\in [r]\}=\partial \mca{M}$. From this, \ref{M1} and \ref{M2}, we conclude that $\mca{M}$ is a plain $(2(d+1),r)$-match for $X$ with $V(\mca{M})\subseteq \partial \mca{L}_{\mf{p}}\cup L^*\subseteq V(\mf{p})$ (see Figure~\ref{fig:match}). But now Lemma~\ref{lem:digraph}\ref{lem:digraph_c} holds, a contradiction. It follows that the third bullet above holds. Let $\mca{P}=\{L\in \mca{L}_{\mf{p}}:\mf{b}_L\in \mf{M}\}$. Then we have $\mca{P}\subseteq \mca{L}_{\mf{p}}$ with $|\mca{P}|=\zeta_d$, and for all distinct $L,L'\in \mca{P}$, $x_L$ is anticomplete to $L'$ in $G$. This proves \eqref{st:bundleleminpatch}.
\begin{figure}
    \centering
    \includegraphics[width=\textwidth]{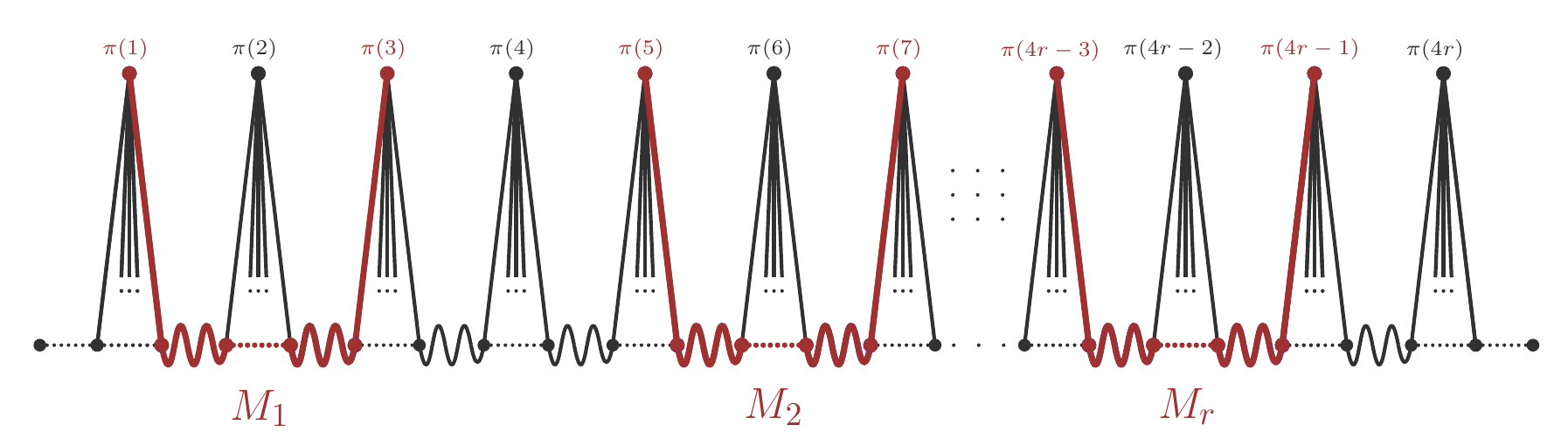}
    \caption{Proof of \eqref{st:bundleleminpatch}.}
    \label{fig:match}
\end{figure}
\medskip

Henceforth, let $\mca{P}\subseteq \mca{L}_{\mf{p}}$ be as in \eqref{st:bundleleminpatch}. Let $S=\{x_L:L\in \mca{P}\}$. Then $S$ is a stable set in $G$ of cardinality $\zeta_d$. 

\sta{\label{st:d>0} We have $d\geq 1$.}

Suppose for a contradiction that $d=0$. Then $|S|=\zeta_0=h^{8h^4}$ (recall that $h=\max\{4r,r',m\}$). Also, by \ref{P3}, $G'=G[S\cup V(\mca{P})\cup S_{\mf{p}}]$ is connected. So we can apply Theorem~\ref{connectifier} to $G'$ and $S\subseteq V(G')$. Since Lemma~\ref{lem:digraph}\ref{lem:digraph_a} does not hold, it follows that $G$ does not contain $K_m$, and so the third outcome of Theorem~\ref{connectifier} does not hold.  We deduce that there is an induced subgraph $H$ of $G'$ for which one of the following holds.
  \begin{itemize}
  \item $H$ is a path in $G$ with $|H\cap S|=4r$.  
  \item $H$ is either a caterpillar or the line graph of a caterpillar in $G$ with $|H\cap S|=4r$ and $H\cap S=\mca{Z}(H)$.
  \item $H$ is a subdivided star with root $z$ such that $|H\cap S|=r'+1$ and $\mca{Z}(H)\subseteq H\cap S\subseteq \mca{Z}(H)\cup \{z\}$.
\end{itemize}
Assume that one of the first two bullets above holds. Then since $S$ is a stable set in $G$, one may readily observe that there exists a plain $(2,r)$-match $\mca{M}$ for $S\subseteq X$ in $G$ with $V(\mca{M})\subseteq V(H)\subseteq V(G')=S\cup V(\mca{P})\cup S_{\mf{p}}$. But then $\mca{M}$ is a plain $(2,r)$-match for $X$ in $G$ with $V(\mca{M})\subseteq V(\mf{p})$, and so Lemma~\ref{lem:digraph}\ref{lem:digraph_c} holds, a contradiction. Now assume that the third bullet above holds. Then one may pick a set 
$\mca{Q}$ of $r'$ pairwise distinct components of $H\setminus z$. It follows that $\mca{Q}$ is a plain $r'$-polypath in $G$. Consequently, $\mf{q}=(\{z\},\mca{Q})$ is a plain $(0,r')$-patch for $S\subseteq X$ in $G$ with $V(\mf{q})\subseteq V(H)\subseteq V(G')=S\cup V(\mca{P})\cup S_{\mf{p}}\subseteq V(\mf{p})$, and so Lemma~\ref{lem:digraph}\ref{lem:digraph_d} holds, again a contradiction. This proves \eqref{st:d>0}.
\medskip

In view of \eqref{st:d>0}, for every $L\in \mca{P}\subseteq \mca{L}_{\mf{p}}$, $L$ has non-zero length, and so $x_L$ has a unique neighbor $x'_L$ in $L\setminus \{x_L\}$. Let $X'=\{x'_L:L\in \mca{P}\}$, and let $\mf{p}'=(X',\{L\setminus \{x_L\}: L\in \mca{P}\})$. Then it is straightforward to check that $\mf{p}'$ is a $(d-1,\zeta_d)$-patch for $X'$ in $G$. This, along with \eqref{st:d>0} and the definition of $\eta$, implies that $\mf{p}'$ is a $(d-1,\eta(d-1,l,m,r,r',s))$-patch for $X'$ in $G$. Also, we have $V(\mf{p}')\subseteq V(\mf{p})$. Now, from the minimality of $d$, it follows that 
\begin{itemize}
    \item either there is a plain $(2d,r)$-match $\mca{M}'$ for $X'$ in $G$ with $ V(\mca{M}')\subseteq V(\mf{p}')\subseteq V(\mf{p})$; or
    \item there is a plain $(d-1,r')$-patch $\mf{q}'$ for $X'$ in $G$ with $V(\mf{q}')\subseteq V(\mf{p}')\subseteq V(\mf{p})$.
\end{itemize}
In the former case, by \eqref{st:d>0}, each path in $\mca{M}'$ has non-zero length, and so for every $M\in \mca{M}'$, there are two distinct paths $K_M,L_M\in \mca{P}$ such that $\partial M=\{x'_{K_M},x'_{L_M}\}$; let $M^+=x_{K_M}\dd x'_{K_M}\dd M\dd x'_{L_M}\dd x_{L_M}$.
But then by \eqref{st:bundleleminpatch}, $\mca{M}=\{M^+:M\in \mca{M}'\}$ is a plain $2(d+1)$-match for $X$ in $G$ such that $ V(\mca{M})\subseteq V(\mca{M}')\cup \partial \mca{L}_{p}\subseteq V(\mf{p})$, and Lemma~\ref{lem:digraph}\ref{lem:digraph_c} holds, a contradiction. Moreover, in the latter case, by \ref{P3}, for each $Q\in \mca{L}_{\mf{q}'}$, there exists exactly one path $R_Q\in \mca{P}$ such that $Q\cap X'=\partial Q\cap X'=\{x'_{R_Q}\}$, and assuming $y_Q$ to be the other end of $Q$ (possibly $x'_{R_Q}=y_Q$), we have $N_Q(S_{\mf{q}'})=\{y_Q\}$. Let $Q^+=x_{R_Q}\dd x'_{R_Q}\dd Q\dd y_Q$.
Then by \eqref{st:bundleleminpatch}, $\mf{q}=(S_{\mf{q}'}, \{Q^+:Q\in \mca{L}_{\mf{q}'}\})$ is a plain $(d,r')$-patch for $X$ in $G$ with $ S_{\mf{q}}\cup V(\mca{L}_{\mf{q}})\subseteq V(\mf{q}')\cup \partial \mca{L}_{p}\subseteq V(\mf{p})$. But now Lemma~\ref{lem:digraph}\ref{lem:digraph_d} holds, again a contradiction. This completes the proof of Lemma~\ref{lem:digraph}.
\end{proof}

We can now prove the main result of this section: 

\begin{proof}[Proof of Theorem~\ref{thm:block_to_constellation}]
Let $H$ be the disjoint union of $c$ cycles, each of length exactly $o+2$ (so $H$ is the unique $2$-regular graph, up to isomorphism, with exactly $c$ components, each on $o+2$ vertices). Let $m=m(H,o+2,t)$ be as in Theorem~\ref{dvorak} (note that here $m$ only depends on $c,o$ and $t$).  Let
$$\Theta=\Theta(c,2,c,2,l,o,s,t);$$
$$\theta=\theta(c,2,c,2,l,o,s,t)$$
be as in Theorem~\ref{thm:bundlethm}, and let $$\eta=\eta(o-1,l,m,\theta,\theta,s)$$
be as in Lemma~\ref{lem:digraph}. Define 
$$\Omega=\Omega(c,l,o,s,t)=\kappa(o,\max\{2\Theta,\eta\},m)$$
where $\kappa=\kappa(\cdot,\cdot,\cdot)$ is as in Theorem~\ref{distance}. We prove that the above value of $\Omega$ satisfies Theorem~\ref{thm:block_to_constellation}. 

Let $G$ be a $(c,o)$-perforated graph and let $B$ be a strong $\Omega$-block in $G$. Suppose for a contradiction that $G$ contains neither $K_t$ nor $K_{t,t}$, and there is no plain $(s,l)$-constellation in $G$. Since $G$ is $(c,o)$-perforated, it follows that $G$ contains no induced subgraph isomorphic to a subdivision of $H$. Therefore, by Theorem~\ref{dvorak} and the choice of $m$, $G$ contains no subgraph isomorphic to a $(\leq o)$-subdivision of $K_m$. It is convenient to sum up all this in one statement:

\sta{\label{st:blockcontradiction} The following hold.
\begin{itemize}
    \item $G$ does not contain $K_{t,t}$.
    \item $G$ does not contain a subgraph isomorphic to a $(\leq o)$-subdivision of $K_m$.
    \item There is no plain $(s,l)$-constellation in $G$.
\end{itemize}}

In particular, the choice of $\Omega$  along with Theorem~\ref{distance} and the second bullet of \eqref{st:blockcontradiction}, implies that there exists $A\subseteq V(G)$ as well as $B'\subseteq B\setminus A$ such that $B'$ is both a $\max\{2\Theta,\eta\}$-block and $o$-stable set in $G\setminus A$. In particular, one may choose $2\Theta$ distinct vertices $\{x_i,y_i:i\in [\Theta]\}$ in $G$ as well as, for each $i\in [\Theta]$, a collection $\mca{P}_i$ of $\eta$ pairwise internally disjoint paths in $G$ from $x_i$ to $y_i$, such that:
\begin{itemize}
    \item for every $i\in [\Theta]$, every path $P\in \mca{P}_i$ has length at least $o+1\geq 2$; and
    \item $V(\mca{P}_1),\ldots, V(\mca{P}_{\Theta})$ are pairwise disjoint in $G$.
\end{itemize}

For each $i\in [\Theta]$, let $\mca{L}_i=\mca{P}^*_i$. Also, for every $L\in \mca{L}_i$, let $x_L$ and $y_L$ be the (unique) neighbors of $x_i$ and $y_i$ in $P$, respectively; so we have $\partial L=\{x_L,y_L\}$ (note that $x_L,y_L$ might be the same, but they are distinct from $x_i$ and $y_i$). Let $X_i=\{x_L:L\in \mca{L}_i\}$. It follows that $\mca{L}_i$ is an $\eta$-polypath in $G$ such that
\begin{itemize}
    \item $y_i\in V(G)\setminus V(\mca{L}_i)$;
    \item every path $L\in \mca{L}_i$ has length at least $o-1$; and
    \item $L\cap X_i=\{x_L\}$ and $N_L(y_i)=\{y_L\}$.
\end{itemize}
Therefore, by \ref{P1}, \ref{P2} and \ref{P3}, for every $i\in [\Theta]$, the $(1,\eta)$-bundle $\mf{p}_i=(\{y_i\},\mca{L}_i)$ is an $(o-1,\eta)$-patch for $X_i$ in $G$. This, along with the choice of $\eta$, allows for an application of Lemma~\ref{lem:digraph} to $X_i$ and $\mf{p}_i$. Since Lemma~\ref{lem:digraph}\ref{lem:digraph_a} and Lemma~\ref{lem:digraph}\ref{lem:digraph_b} violate the second and the third bullet of \eqref{st:blockcontradiction}, respectively, we deduce that for every $i\in [\Theta]$, one of the following holds.
 \begin{itemize}
         \item There is a plain $(2o,\theta)$-match $\mca{M}_i$ for $X_i$ in $G$ with $ V(\mca{M}_i)\subseteq V(\mf{p}_i)\subseteq V(G)\setminus \{x_i\}$. See Figure~\ref{fig:P&M}, Left.
        
        \item There is a plain $(o-1,\theta)$-patch $\mf{q}_i$ for $X_i$ in $G$ with $V(\mf{q}_i) \subseteq V(\mf{p}_i)\subseteq V(G)\setminus \{x_i\}$ (note that $S_{\mf{q}_i}\subseteq X_i$ is also possible). See Figure~\ref{fig:P&M},  Middle and Right.
    \end{itemize}

Now, for each $i\in [\Theta]$, we define the plain $(\leq 2,\theta)$-bundle $\mf{b}_i$ as follows. In the former case above, let $S_{\mf{b}_i}=\{x_i\}$ and let $\mca{L}_{\mf{b}_i}=\mca{M}_i$, and in the latter case, let $S_{\mf{b}_i}=S_{\mf{q}_i}\cup \{x_i\}$ and let $\mca{L}_{\mf{b}_i}=\mca{L}_{\mf{q}_i}$.

\begin{figure}
\centering
\begin{tikzpicture}[scale=0.6, auto=left, mblack]
\tikzstyle{every node}=[inner sep=1.3pt, circle]  
\centering


\node [label={270:$x_i$},fill] at(0,-1) {};


\node [fill] at(-4,1) {};
\node [inner sep=2pt,circle,draw,thick] at (-4,1) {};
\node [fill] at(-2,1) {};
\node [inner sep=2pt,circle,draw,thick] at (-2,1) {};
\node [fill] at(-1,1) {};
\node [inner sep=2pt,circle,draw,thick] at (-1,1) {};
\node [fill] at(1,1) {};
\node [inner sep=2pt,circle,draw,thick] at (1,1) {};
\node [fill] at(2,1) {};
\node [inner sep=2pt,circle,draw,thick] at (2,1) {};
\node [fill] at(4,1) {};
\node [inner sep=2pt,circle,draw,thick] at (4,1) {};


\draw[thick, snake it] (4,1) arc (0:180:4cm);
\draw[thick, snake it] (2,1) arc (0:180:2cm);
\draw[thick, snake it] (1,1) arc (0:180:1cm);
\path (0,5) -- node[auto=false, thick]{$\vdots$} (0,4);
\path (2,3) -- node[auto=false, thick]{$\iddots$} (3,3.5);
\path (-2,3) -- node[auto=false, thick]{$\ddots$} (-3,3.5);


\draw[thick] (0,-1) -- (4,1);
\draw[thick] (0,-1) -- (2,1);
\draw[thick] (0,-1) -- (1,1);
\draw[thick] (0,-1) -- (-1,1);
\draw[thick] (0,-1) -- (-2,1);
\draw[thick] (0,-1) -- (-4,1);

%
%
%


\node [label={270:$x_i$},fill] at(8.5,-1) {};


\node [label={90:$S_{\mf{q}_i}$}, fill] at(8.5,5) {};


\node [fill] at(7,1) {};
\node [inner sep=2pt,circle,draw,thick] at (7,1) {};
\node [fill] at(8,1) {};
\node [inner sep=2pt,circle,draw,thick] at (8,1) {};
\node [fill] at(10,1) {};
\node [inner sep=2pt,circle,draw,thick] at (10,1) {};

\node [fill] at(7,4) {};
\node [fill] at(8,4) {};
\node [fill] at(10,4) {};


\draw[thick, snake it] (7,1) -- (7,4);
\draw[thick, snake it] (8,1) -- (8,4);
\draw[thick, snake it] (10,1) -- (10,4);
\path (8,2.5) -- node[auto=false, thick]{$\cdots$} (10,2.5);


\draw[thick] (8.5,-1) -- (7,1);
\draw[thick] (8.5,-1) -- (8,1);
\draw[thick] (8.5,-1) -- (10,1);

\draw[thick] (8.5,5) -- (7,4);
\draw[thick] (8.5,5) -- (8,4);
\draw[thick] (8.5,5) -- (10,4);


\node [label={270:$x_i$},fill] at(19.5,-1) {};


\node [label={240:$S_{\mf{q}_i}$},fill] at(14.5,1) {};
\node [inner sep=2pt,circle,draw,thick] at (14.5,1) {};


\node [fill] at(13.2,1.9) {};

\node [fill] at(15.3,1.9) {};

\node [fill] at(16.7,1.9) {};

\node [fill] at(18,1) {};
\node [inner sep=2pt,circle,draw,thick] at (18,1) {};
\node [fill] at(19,1) {};
\node [inner sep=2pt,circle,draw,thick] at (19,1) {};
\node [fill] at(21,1) {};
\node [inner sep=2pt,circle,draw,thick] at (21,1) {};


\draw[thick, snake it] (21,1) arc (0:166:4cm);
\draw[thick, snake it] (19,1) arc (0:150:2cm);
\draw[thick, snake it] (18,1) arc (0:110:1cm);
\path (17,4) -- node[auto=false, thick]{$\vdots$} (17,5);
\path (19,3) -- node[auto=false, thick]{$\iddots$} (20,3.5);
\path (15,3) -- node[auto=false, thick]{$\ddots$} (14,3.5);


\draw[thick] (19.5,-1) -- (18,1);
\draw[thick] (19.5,-1) -- (19,1);
\draw[thick] (19.5,-1) -- (21,1);
\draw[thick] (19.5,-1) -- (14.5,1);


\draw[thick] (14.5,1) -- (13.2,1.9);
\draw[thick] (14.5,1) -- (15.3,1.9);
\draw[thick] (14.5,1) -- (16.7,1.9);

\end{tikzpicture}

\caption{Left: squiggly arcs depict the paths in $\mca{M}_i$, each of length at least $2o$. Middle and Right: squiggly arcs depict the paths in $\mca{L}_{\mf{q_i}}$, each of length at least $o-1$. In all three figures, circled nodes represent the vertices in $X_i$.}
\label{fig:P&M}
\end{figure}
    It follows that $\mf{T}=\{\mf{b}_1,\ldots, \mf{b}_{\Theta}\}$ is a collection of $\Theta$ pairwise disentangled plain $(\leq 2,\theta)$-bundles in $G$. Given the choices of $\Theta$ and $\theta$, we can apply Theorem~\ref{thm:bundlethm} to $\mf{T}$, which combined with the first and third bullet of \eqref{st:blockcontradiction} implies that there exists $I\subseteq [\Theta]$ with $|I|=c$ as well as $\mca{H}_{i}\subseteq \mca{L}_{\mf{b}_i}$ with $|\mca{H}_{i}|=2$ for each $i\in I$, such that for all distinct $i,i'\in I$, $S_{\mf{b}_i}\cup V(\mca{H}_{i})$ is anticomplete to $S_{\mf{b}_{i'}}\cup V(\mca{H}_{i'})$ in $G$. In particular, for every $i\in I$, there is a cycle $H_i$ in $G[S_{\mf{b}_{i'}}\cup V(\mca{H}_{i'})]$ of length at least $2o+2$. But now $H_1,\ldots, H_c$ comprise $c$ pairwise disjoint and anticomplete cycles in $G$, each of length at least $o+2$, which violates the assumption that $G$ is $(c,o)$-perforated. This completes the proof of Theorem~\ref{thm:block_to_constellation}.
\end{proof}
 
\section{The long-awaited conclusion}\label{sec:end}

Eventually, we can bring everything together and give a proof of Theorem~\ref{mainthmgeneral}:
\setcounter{section}{3}
\setcounter{theorem}{1}
\begin{theorem}
    For all   $c,o,t\in \poi$ and $s \in \poi \cup \{0\}$, there is a constant $\tau=\tau(c,o,s,t)\in \poi$ such that every $(c,o)$-perforated graph of treewidth more than $\tau$ contains either $K_t$ or $K_{t,t}$, or there is a full $(s,o)$-occultation in $G$.
\end{theorem}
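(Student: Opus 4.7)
The plan is to deduce Theorem~\ref{mainthmgeneral} almost immediately by chaining together the three principal results that have been set up in Sections~\ref{sec:prilim}, \ref{sec:cherry}--\ref{sec:surgery}, and \ref{sec:patch}. The overall strategy reads the outline of Section~\ref{sec:outline} backwards: large treewidth $\Rightarrow$ large strong block $\Rightarrow$ large plain constellation $\Rightarrow$ full $(s,o)$-occultation.

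First I would fix $c,o,t\in\poi$ and $s\in\poi\cup\{0\}$, and work upward through the constants. Let $\Sigma=\Sigma(c,o,s,t)$ and $\Lambda=\Lambda(c,o,s,t)$ be as given by Theorem~\ref{thm:const_to_fire}, which tells us exactly the size of constellation needed in order to extract either $K_t$, $K_{t,t}$, or a full $(s,o)$-occultation. Next, let $\Omega=\Omega(c,\Lambda,o,\Sigma,t)$ be as supplied by Theorem~\ref{thm:block_to_constellation}, which converts a sufficiently large strong block inside a $(c,o)$-perforated graph into either $K_t$, $K_{t,t}$, or a plain $(\Sigma,\Lambda)$-constellation. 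Finally, let $\tau=\tau(c,o,s,t)=\xi(c,\Omega,o,t)$ be furnished by Corollary~\ref{noblocksmalltw_polycyclefree}, which guarantees that any $(c,o)$-perforated graph of treewidth exceeding this value contains either $K_t$, $K_{t,t}$, or a strong $\Omega$-block.

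Now, given a $(c,o)$-perforated graph $G$ with $\tw(G)>\tau$, I would first apply Corollary~\ref{noblocksmalltw_polycyclefree}. If we do not already have $K_t$ or $K_{t,t}$, then $G$ contains a strong $\Omega$-block $B$. Feeding $(G,B)$ into Theorem~\ref{thm:block_to_constellation}, we again either produce $K_t$ or $K_{t,t}$ (in which case we are done), or we obtain a plain $(\Sigma,\Lambda)$-constellation $\mf{c}$ in $G$. In the latter case, invoking Theorem~\ref{thm:const_to_fire} on $\mf{c}$, once more we either land on $K_t$ or $K_{t,t}$ or we finally produce the desired full $(s,o)$-occultation in $G$. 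In every branch of the argument, one of the three outcomes of the statement is attained.

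There is essentially no obstacle left at this stage, since all of the genuinely difficult ingredients have already been proven: Theorem~\ref{thm:block_to_constellation} is the Ramsey-theoretic bundle/constellation extraction from Section~\ref{sec:patch}, Theorem~\ref{thm:const_to_fire} is the transition-graph analysis from Section~\ref{sec:surgery} combined with the cherry-on-top construction from Section~\ref{sec:cherry}, and Corollary~\ref{noblocksmalltw_polycyclefree} is a direct consequence of Theorem~\ref{noblocksmalltw_wall} of \cite{twvii} together with the observation that subdivided walls and their line graphs fail to be $(c,o)$-perforated once they are large enough. What remains here is only the bookkeeping of the constants, so the proof proper consists of essentially three lines. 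I would conclude by remarking, as indicated right after the statement, that Corollary~\ref{mainthmgeneralcorollary} follows by boosting $s$ to $g+s$ and applying Theorem~\ref{thm:fulldeathstarproperties}\ref{fulldeathstarproperties_a} to the resulting full $(g+s,o)$-occultation to secure the desired girth.
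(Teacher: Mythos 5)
Your proposal is correct and is essentially identical to the paper's own proof: the same chain Corollary~\ref{noblocksmalltw_polycyclefree} $\Rightarrow$ strong $\Omega$-block, Theorem~\ref{thm:block_to_constellation} $\Rightarrow$ plain $(\Sigma,\Lambda)$-constellation, Theorem~\ref{thm:const_to_fire} $\Rightarrow$ full $(s,o)$-occultation, with the same choice of constants.
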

\begin{proof}
    Let $\Sigma=\Sigma(c,o,s,t)$ and     $\Lambda=\Lambda(c,o,s,t)$ be as in Theorem~\ref{thm:const_to_fire}. Let $\Omega=\Omega(c,\Lambda,o,\Sigma,t)$ be as in Theorem~\ref{thm:block_to_constellation}. Let  
    $\tau(c,o,s,t)=\xi(c,o,\Omega,t)$, where $\xi(\cdot,\cdot,\cdot,\cdot)$ is as in Corollary~\ref{noblocksmalltw_polycyclefree}.  Let $G$ be a $(c,o)$-perforated graph of treewidth more than $\tau$. Assume that $G$ contains neither $K_t$ nor $K_{t,t}$. By Corollary~\ref{noblocksmalltw_polycyclefree}, $G$ contains a strong $\Omega$-block. Therefore, by Theorem~\ref{thm:block_to_constellation}, there exists a plain $(\Sigma,\Lambda)$-constellation in $G$. But now by Theorem~\ref{thm:const_to_fire}, $G$ contains a full $(s,o)$-occultation, as desired.
\end{proof}

\setcounter{section}{8}

\section{Acknowledgment}
  This work was partly done during the Second 2022 Barbados Graph Theory Workshop at the Bellairs Research Institute of McGill University, in Holetown, Barbados. We thank the organizers for inviting us and for creating a stimulating work environment.
  
\bibliographystyle{amsplain}


\begin{aicauthors}
\begin{authorinfo}[balec]
  Bogdan Alecu\\
  School of Computing, University of Leeds\\
  Leeds, UK
\end{authorinfo}
\begin{authorinfo}[mchud]
 Maria Chudnovsky\\
 Princeton University\\
 Princeton, New Jersey, USA
\end{authorinfo}
\begin{authorinfo}[laci]
Sepehr Hajebi\\
Department of Combinatorics and Optimization, University of Waterloo\\
Waterloo, Ontario, Canada
\end{authorinfo}
\begin{authorinfo}[andy]
 Sophie Spirkl\\
Department of Combinatorics and Optimization, University of Waterloo\\
Waterloo, Ontario, Canada
\end{authorinfo}
\end{aicauthors}

\end{document}